\newtheorem{Theorem}{Theorem}[section]
\newtheorem{Lemma}[Theorem]{Lemma}
\newtheorem{Corollary}[Theorem]{Corollary}
\newtheorem{Assumption}[Theorem]{Assumption}
\newcommand{\N}{\mathbb{N}}
\newcommand{\R}{\mathbb{R}}
\newcommand{\SP}{\mathbb{S}}
\newcommand{\NN}{\mathcal{N}}
\newcommand{\GG}{\mathcal{G}}
\newcommand{\HH}{\mathcal{H}}
\newcommand{\LL}{\mathcal{L}}
\newcommand{\E}{\mathbb{E}}
\newcommand{\abs}[1]{\left| #1 \right|}
\newcommand{\dx}[1][x]{\,\mathrm{d}#1}
\newcommand{\tT}{\mathrm{T}}
\newcommand{\e}{\mathrm{e}}
\newcommand{\n}{\phantom{0}}
\def\tT{{\mbox{\tiny{T}}}}
\newcommand\inner[2]{\left\langle #1, #2 \right\rangle}
\newcommand\eps{\varepsilon}
\renewcommand\theta{\vartheta}
\DeclareMathOperator*{\argmin}{argmin}
\DeclareMathOperator{\tr}{tr}
\DeclareMathOperator{\Var}{Var}
\DeclareMathOperator{\Cov}{Cov}
\DeclareMathOperator{\SPD}{SPD}
\begin{document}

\title{
Alternatives to the EM Algorithm for ML-Estimation of Location, Scatter
Matrix and Degree of Freedom of the Student-$t$ Distribution 
}
\author{Marzieh Hasannasab\footnotemark[2]
\and
Johannes Hertrich\footnotemark[1]
\and
Friederike Laus\footnotemark[1]
\and
Gabriele Steidl\footnotemark[2]
}

\maketitle

\footnotetext[1]{Department of Mathematics,
Technische Universit\"at Kaiserslautern,
Paul-Ehrlich-Str.~31, D-67663 Kaiserslautern, Germany,
\{hasannas, jhertric, friederike.laus\}@mathematik.uni-kl.de.
} 

\footnotetext[2]{Institute of Mathematics,
TU Berlin,
Straße des 17. Juni 136, 
 D-10623 Berlin, Germany,
steidl@math.tu-berlin.de.
}

\begin{abstract}
In this paper, we consider maximum likelihood estimations of the degree of freedom parameter $\nu$, 
the location parameter $\mu$  and the scatter matrix $\Sigma$ 
of the multivariate Student-$t$ distribution.
In particular, we are interested in estimating the degree of freedom parameter $\nu$ that determines 
the tails of the corresponding probability density function and was rarely considered in detail in the literature so far.
We prove that under certain assumptions a minimizer of the negative log-likelihood function exists, where we have to take special care of the case
$\nu \rightarrow \infty$, for which the Student-$t$ distribution approaches the Gaussian distribution.
As alternatives to the classical EM algorithm we propose three other algorithms which cannot be interpreted
as EM algorithm. For fixed $\nu$, the first algorithm is an accelerated EM algorithm known from the literature.
However, since we do not fix $\nu$, we cannot apply standard convergence results for the EM algorithm.
The other two algorithms differ from this algorithm in the iteration step for $\nu$. 
We show how the objective function   behaves for the different updates of $\nu$
and prove for all three algorithms that it  decreases in each iteration step.
We compare the algorithms as well as some accelerated versions 
by numerical simulation and apply one of them for estimating the degree of freedom parameter
in images corrupted by Student-$t$ noise.   
\end{abstract}

\section{Introduction} \label{sec:intro}
The motivation for this work arises from certain tasks in image processing, where the robustness of methods plays an important role.
In this context, the Student-$t$ distribution and the closely related Student-$t$ mixture models became popular
in various image processing tasks.  
In~\cite{VS14} it has been shown that Student-$t$ mixture models are superior 
to Gaussian mixture models for mode\-ling image patches and the authors proposed an application in image compression. 
Image denoising based on Student-$t$ models was addressed in~\cite{LS2019} and image
deblurring in~\cite{DHWMZ2019,YYG2018}.
Further applications include robust image segmentation~\cite{BM18,NW12,SNG07} as well as robust registration~\cite{GNL09,ZZDZC14}.

In one dimension and for $\nu=1$, the Student-$t$ distribution coincides with the one-dimensional Cauchy distribution.
This distribution has been proposed to model a very impulsive noise behavior and
one of the first papers which suggested a variational approach in connection with wavelet shrinkage
for denoising of images corrupted by Cauchy noise was 
\cite{ALP2002}. A variational method consisting of a data term 
that resembles the noise statistics and a total variation regularization term has been introduced in~\cite{MDHY18,SDZ15}. 
Based on an ML approach the authors of~\cite{LPS18} introduced a so-called
generalized  myriad filter that estimates both the location and the scale parameter of the Cauchy distribution.
They used the filter in a nonlocal denoising approach,
where for each pixel of the image they chose as samples of the distribution those pixels  having a similar neighborhood 
and replaced the initial pixel by its filtered version.
We also want to mention that a unified framework for images corrupted by white noise 
that can handle (range constrained) Cauchy noise as well was suggested in~\cite{LMSS2018}.

In contrast to the above pixelwise replacement, the state-of-the-art algorithm of Lebrun et al.~\cite{LBM13}
for denoising images corrupted by white Gaussian noise
restores the image  patchwise  based on a maximum a posteriori approach.
In the Gaussian setting, their approach is equivalent to
minimum mean square error estimation, and more general, the resulting estimator can be
seen as a particular instance of a best linear unbiased estimator (BLUE). 
For denoising images corrupted by additive Cauchy noise, a similar approach was addressed in \cite{LS2019} based 
on ML estimation for the family of Student-t distributions, of which the
Cauchy distribution forms a special case.
The authors call this approach generalized multivariate myriad filter.

However, all these approaches assume that the degree of freedom parameter $\nu$ of the Student-$t$
distribution is known, which might not be the case in practice. In this paper we  consider the estimation of the degree of freedom parameter based on an ML approach. 
In contrast to maximum likelihood estimators of the location and/or scatter parameter(s) $\mu$ and $\Sigma$, 
to the best of our knowledge the question of existence of a joint maximum likelihood estimator has not been analyzed before and in this paper we provide first results in this direction.
Usually the likelihood function  of the Student-$t$ distributions and mixture models  
are minimized using the EM algorithm derived e.g.\ in~\cite{LLT89,McLK1997,MP98,PM00}.
For fixed $\nu$, there exists an accelerated EM algorithm~\cite{KTV94,MVD97,vanDyk1995} 
which appears to be more efficient than the classical one for smaller parameters $\nu$. 
We examine the convergence of the accelerated version if also the
degree of freedom parameter $\nu$ has to be estimated. Also for unknown degrees of freedom, there exist an accelerated version of the EM algorithm, 
the so-called ECME algorithm~\cite{LR95} which differs from our algorithm.
Further, we propose two  modifications of the $\nu$ iteration step which lead to efficient algorithms
for  a wide range of parameters $\nu$. Finally, we address further accelerations of our algorithms by
the squared iterative methods (SQUAREM) \cite{VR2008} and the
damped Anderson acceleration with restarts and $\epsilon$-monotonicity (DAAREM) \cite{HV2019}.

The paper is organized as follows: 
In Section \ref{sec:ML} we introduce the Student-$t$ distribution, the negative $\log$-likelihood function $L$
and their derivatives.
The question of the existence of a minimizer of $L$ is addressed in Section \ref{sec:exits_nu_scatter}. 
Section \ref{sec:zero_F} deals with the solution of the equation arising when setting the gradient of $L$
with respect to $\nu$ to zero. The results of this section will be important for the convergence
consideration of our algorithms in the 
Section \ref{sec:algs}. We propose three alternatives of the classical EM algorithm and prove 
that the objective function $L$ decreases for the iterates produced by these algorithms.
Finally, we provide two kinds of numerical results in Section \ref{sec:algs}.
First, we compare the different algorithms by numerical examples which indicate that the new $\nu$ iterations
are very efficient for estimating $\nu$ of different magnitudes.
Second, we come back to the original motivation of this paper and  estimate the degree of freedom parameter $\nu$
from images corrupted by one-dimensional Student-$t$ noise.

\section{Likelihood of the Multivariate Student-$t$ Distribution} \label{sec:ML}
The density function of the 
$d$-dimensional Student-$t$ distribution $T_\nu(\mu,\Sigma)$ with 
$\nu>0$ degrees of freedom, \emph{location} paramter $\mu\in \R^d$ and symmetric,  positive definite \emph{scatter matrix} $\Sigma\in \SPD(d)$ 
is given by
\begin{equation}\label{pdf}
p(x|\nu,\mu,\Sigma)  = 
\frac{\Gamma\left(\frac{d+\nu}{2}\right)}{\Gamma\left(\frac{\nu}{2}\right)\, \nu^{\frac{d}{2}} \, \pi^{\frac{d}{2}} \,
{\abs{\Sigma}}^{\frac{1}{2}}} \, \frac{1}{\left(1 +\frac1\nu(x-\mu)^\tT \Sigma^{-1}(x-\mu) \right)^{\frac{d+\nu}{2}}},
\end{equation}
with the \emph{Gamma function}
$
\Gamma(s) \coloneqq\int_0^\infty t^{s-1}\e^{-t}\dx[t] 
$.
The expectation of the Student-$t$ distribution is $\E(X) = \mu$ for $\nu > 1$ 
and the covariance matrix is given by $\Cov(X) =\frac{\nu }{\nu-2} \Sigma$ for $\nu > 2$, 
otherwise the quantities are undefined. 
The smaller the value of $\nu$, the heavier are the tails of the $T_\nu(\mu,\Sigma)$ distribution.
For $\nu \to \infty$, 
the Student-$t$ distribution $T_\nu(\mu,\Sigma)$ converges to the normal distribution $\NN(\mu,\Sigma)$ and for $\nu = 0$
it is related to the projected normal distribution on the sphere $\SP^{d-1}\subset\R^d$.
Figure~\ref{Fig:different_nu} 
illustrates this behavior for the one-dimensional standard Student-$t$ distribution. 

\begin{figure}[thb]
\centering  
\centering  
{\includegraphics[width=0.4\textwidth]{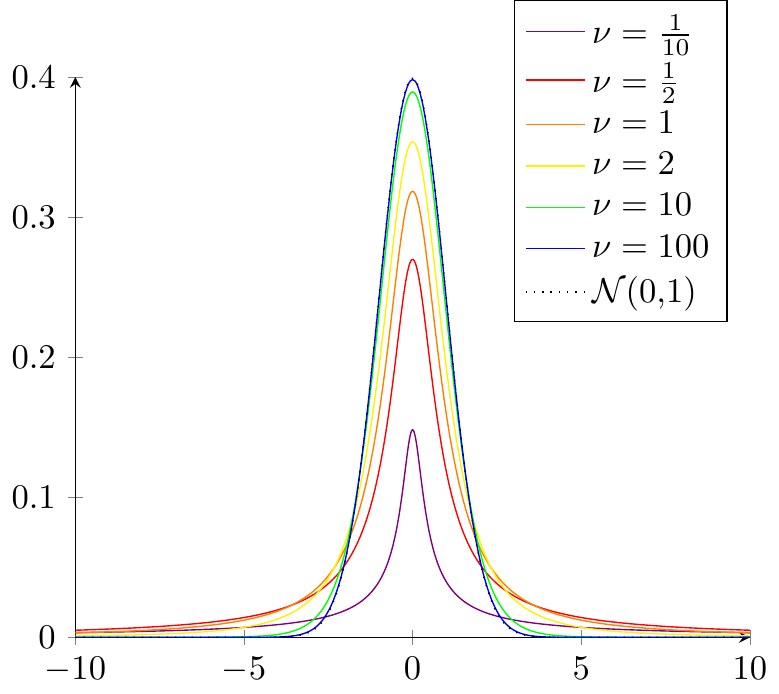}}
\caption{Standard Student-$t$ distribution $T_\nu(0,1)$ 
for different values of $\nu$ in comparison with the standard normal distribution $\NN(0,1)$.}\label{Fig:different_nu}
\end{figure}

As the normal distribution, the $d$-dimensional Student-$t$ distribution belongs to the class of \emph{elliptically symmetric distributions}.
These distributions are stable under linear transforms in the following sense:  Let $X\sim T_\nu(\mu,\Sigma)$  and $A\in \R^{d\times d}$ be an invertible matrix and let $b\in \R^d$. 
Then $AX + b\sim T_\nu(A\mu + b, A\Sigma A^\tT)$. Furthermore, the Student-$t$ distribution $T_\nu(\mu,\Sigma)$ admits the following \emph{stochastic representation}, which can be used to generate samples from $T_\nu(\mu,\Sigma)$ based on samples from the multivariate standard normal distribution $\mathcal{N}(0,I)$ and the Gamma distribution $\Gamma\bigl(\tfrac{\nu}{2},\tfrac{\nu}{2}\bigr)$: Let $Z\sim \mathcal{N}(0,I)$ and $Y\sim \Gamma\bigl(\tfrac{\nu}{2},\tfrac{\nu}{2}\bigr)$ be independent, then
\begin{equation} X = \mu + \frac{\Sigma^{\frac{1}{2}}Z}{\sqrt{Y}}\sim  T_\nu(\mu,\Sigma).\label{stochastic_representation}
\end{equation}

For i.i.d.\ samples $x_i \in \mathbb R^d$, $i=1,\ldots,n$, 
the likelihood function of the Student-$t$ distribution $T_\nu(\mu,\Sigma)$ is given by
\begin{equation*}
\LL(\nu,\mu,\Sigma|x_1,\ldots,x_n)
= \frac{\Gamma\left(\frac{d+\nu}{2}\right)^n}{\Gamma\left(\frac{\nu}{2}\right)^n(\pi \nu)^{\frac{nd}{2}}\abs{\Sigma}^{\frac{n}{2}} } 
\prod_{i=1}^n \frac{1}{\bigl(1+\frac{1}{\nu}(x_i-\mu)^\tT \Sigma^{-1} (x_i-\mu)\bigr)^{\frac{d+\nu}{2}}},
\end{equation*}
and the log-likelihood function by
\begin{align}
\ell(\nu,\mu,\Sigma|x_1,\ldots,x_n) 
=&  
\, n \, \log\Bigl(\Gamma\left(	\tfrac{d+\nu}{2}\right)\Bigr) 
- n \log \Bigl( \Gamma\left(\tfrac{\nu}{2}\right)\Bigr)-\tfrac{nd}{2}\log(\pi\nu) \\
&- \frac{n}{2}\log \abs{\Sigma} - \tfrac{d+\nu}{2} \sum_{i=1}^n \log\left(1+\frac{1}{\nu}(x_i-\mu)^\tT \Sigma^{-1} (x_i-\mu) \right).
\end{align}
In the following, we are interested in the negative log-likelihood function, which up to the factor $\frac{2}{n}$ and weights $w_i = \frac{1}{n}$ reads as  
\begin{align} \label{ML}
L(\nu,\mu,\Sigma)
&= -2\log\Bigl(\Gamma\left(	\tfrac{d+\nu}{2}\right)\Bigr)+ 2 \log\Bigl(\Gamma\left(	\tfrac{\nu}{2}\right)\Bigr) - \nu \log(\nu) \\
 &\quad +  (d + \nu)\sum_{i=1}^n w_i \log\left(\nu + (x_i-\mu)^\tT \Sigma^{-1} (x_i-\mu) \right)+ \log \abs{\Sigma}.
\end{align}
In this paper, we allow for arbitrary weights from the open probability simplex
$
\mathring \Delta_n \coloneqq  \big\{w = (w_1,\ldots,w_n) \in \mathbb R_{>0}^n: \sum_{i=1}^n w_i = 1 \big\}
$.
In this way, we might express different levels of confidence in single samples or handle the occurrence of multiple samples.
Using 
$
\frac{\partial \log(\abs{X})}{\partial X} = X^{-1}
$ and
$
\frac{\partial a^\tT X^{-1}b }{\partial X} =- {(X^{-\tT})}a b^\tT {(X^{-\tT})},
$
see~\cite{PP08}, the derivatives of $L$ with respect to $\mu$, $\Sigma$ and $\nu$ are given by
\begin{align*}
\frac{\partial L}{\partial \mu}(\nu,\mu,\Sigma) 
& = -2(d+\nu )\sum_{i=1}^n w_i \frac{ \Sigma^{-1}(x_i-\mu)}{\nu + (x_i-\mu)^\tT \Sigma^{-1} (x_i-\mu)},\\
\frac{\partial L}{\partial \Sigma}(\nu,\mu,\Sigma)	
& = - (d+\nu ) \sum_{i=1}^n w_i \frac{ \Sigma^{-1}(x_i-\mu)(x_i-\mu)^\tT \Sigma^{-1} }{\nu + (x_i-\mu)^\tT \Sigma^{-1} (x_i-\mu)}+\Sigma^{-1},\\
\frac{\partial L}{\partial \nu}(\nu,\mu,\Sigma ) 
& = 
\phi\left(\frac{\nu}{2}\right) - \phi \left(\frac{\nu + d}{2}\right) + \sum_{i=1}^n w_i \left( \frac{\nu + d}{\nu +  (x_i-\mu)^\tT \Sigma^{-1} (x_i-\mu)}\right.\\
 & \quad \left. - \log\left(\frac{\nu + d}{\nu +  (x_i-\mu)^\tT \Sigma^{-1} (x_i-\mu)} \right) - 1\right),
\end{align*}
with 
$$\phi(x) \coloneqq \psi(x) - \log (x), \qquad x >0$$ 
and the \emph{digamma function}
$$
\psi(x) = \frac{\mathrm{d}}{\mathrm{d}x}\log\left(\Gamma(x)\right) = \frac{\Gamma'(x)}{\Gamma(x)}.
$$
Setting the derivatives to zero results in the equations
\begin{align}
0 &= \sum_{i=1}^n w_i \frac{x_i-\mu}{\nu+(x_i-\mu)^\tT \Sigma^{-1} (x_i-\mu)},\label{mult_cond_a}\\
I &=	(d+\nu)\sum_{i=1}^n w_i \frac{\Sigma^{-\frac{1}{2}}(x_i-\mu)(x_i-\mu)^\tT {\Sigma^{-\frac{1}{2}}} }{\nu+(x_i-\mu)^\tT \Sigma^{-1} (x_i-\mu)} ,\label{mult_cond_S}\\
0 &= F\left(\frac{\nu }{2} \right) \coloneqq \phi\left(\frac{\nu }{2}\right) - \phi\left(\frac{\nu +d}{2}\right) \\
 &\quad + \sum_{i=1}^n w_i \left( \frac{\nu + d}{\nu +  (x_i-\mu)^\tT \Sigma^{-1} (x_i-\mu)}- \log\left(\frac{\nu + d}{\nu +  (x_i-\mu)^\tT \Sigma^{-1} (x_i-\mu)} \right) - 1\right). \label{ML_nu}
\end{align}
Computing the trace of both sides of~\eqref{mult_cond_S} and using the linearity and permutation invariance of the trace operator we obtain
\begin{align}
d& = \tr(I) 
=
(d+\nu)\sum_{i=1}^n w_i \frac{\tr\bigl(\Sigma^{-\frac{1}{2}}(x_i-\mu)(x_i-\mu)^\tT {\Sigma^{-\frac{1}{2}}}\bigr)}{\nu+(x_i-\mu)^\tT \Sigma^{-1} (x_i-\mu)}  \\
 &= (d+\nu)\sum_{i=1}^n w_i \frac{(x_i-\mu)^\tT \Sigma^{-1} (x_i-\mu)}{\nu+(x_i-\mu)^\tT \Sigma^{-1} (x_i-\mu)},
\end{align}
which yields
\begin{equation} \label{trace_1}
	1= (d+\nu)	\sum_{i=1}^n w_i \frac{1}{\nu+(x_i-\mu)^\tT \Sigma^{-1} (x_i-\mu)}.
\end{equation}
We are interested in critical points of the negative log-likelihood function $L$, i.e. in solutions $(\mu,\Sigma,\nu)$ of 
\eqref{mult_cond_a} - \eqref{ML_nu}, and in particular in minimizers of $L$.

\section{Existence of Critical Points}\label{sec:exits_nu_scatter}
In this section, we examine whether the negative log-likelihood function $L$ has a minimizer, where we restrict our attention to the case $\mu = 0$. For an approach how to extend the results to arbitrary $\mu$ for fixed $\nu$  we refer to \cite{LS2019}. To the best of our knowledge, this is the first work that provides results in this direction. The question of existence is, however, crucial in the context of ML estimation, since it lays the foundation for  any convergence result for the EM algorithm or its variants. In fact, the authors of~\cite{LLT89} observed the   divergence of the EM algorithm  in some of their numerical experiments, which is in accordance with our observations.

For \emph{fixed} $\nu >0$, it is known that there exists a unique solution of \eqref{mult_cond_S} 
and for $\nu = 0$ that there exists solutions of \eqref{mult_cond_S}
which differ only by a multiplicative positive constant, see, e.g. \cite{LS2019}.
In contrast, if we do not fix $\nu$, we have roughly to distinguish between the two cases 
that the samples tend to come from a Gaussian distribution, i.e.\ $\nu\to\infty$, or not.
The results are presented in Theorem \ref{thm:Existence of Scatter}.

We make the following general assumption:

\begin{Assumption}\label{Ass:lin_ind} 
Any subset of $\le d$ samples  $x_i$, $i \in \{1,\ldots,n\}$ is linearly independent and
$\max\{w_i:i=1,\ldots,n\}<\frac{1}{d }$.
\end{Assumption}
 
For $\mu = 0$, the negative log-likelihood function becomes
\begin{align}
L(\nu,\Sigma) 
&\coloneqq  -2\log\left(\Gamma\left(\frac{d+\nu}{2}\right)\right)+2\log\left(\Gamma\left(\frac\nu2\right)\right)-\nu\log(\nu)\\
&\quad +(d+\nu)\sum_{i=1}^n w_i \log\left(\nu+ x_i^\tT\Sigma^{-1}x_i\right)+\log(\abs{\Sigma})\\
&=-2\log\left(\Gamma\left(\frac{d+\nu}{2}\right)\right)+2\log\left(\Gamma\left(\frac\nu2\right)\right)-\nu\log(\nu)\\
&\quad +(d+\nu)\log(\nu)+(d+\nu)\sum_{i=1}^nw_i\log\left(1+ \frac1\nu x_i^\tT\Sigma^{-1}x_i\right)+\log(\abs{\Sigma}).
\end{align}
Further, for a fixed $\nu>0$, set
\begin{equation*}
	L_\nu(\Sigma)  \coloneqq (d+\nu)\sum_{i=1}^n w_i \log\left(\nu+ x_i^\tT\Sigma^{-1}x_i\right)+\log(\abs{\Sigma}).
\end{equation*}

To prove the next existence theorem we will need two lemmas, whose proofs are given in the appendix.

\begin{Theorem} \label{thm:Existence of Scatter}
Let $x_i \in \mathbb R^d$, $i=1,\ldots,n$ and $w \in \mathring \Delta_n$ fulfill Assumption \ref{Ass:lin_ind}. 
Then exactly one of the following statements holds:
\begin{enumerate}
\item[(i)] There exists a minimizing sequence $(\nu_r,\Sigma_r)_r$ of $L$, 
such that $\{\nu_r:r\in\mathbb N\}$ has a finite cluster point. Then we have
$\argmin_{(\nu,\Sigma)\in\R_{>0}\times\mathrm{SPD}(d)} L(\nu,\Sigma)\neq\emptyset$ 
and every 
$(\hat \nu,\hat \Sigma)\in\argmin_{(\nu,\Sigma)\in\R_{>0}\times\mathrm{SPD}(d)}L(\nu,\Sigma)$ 
is a critical point of $L$.
\item[(ii)] 
For every minimizing sequence $(\nu_r,\Sigma_r)_r$ of $L(\nu,\Sigma)$ we have
$\lim\limits_{r\to\infty} \nu_r=\infty$. Then
$(\Sigma_r)_r$ converges to the maximum likelihood estimator $\hat\Sigma=\sum_{i=1}^n w_ix_ix_i^\tT$ 
of the normal distribution $\mathcal{N}(0,\Sigma)$.
\end{enumerate}
\end{Theorem}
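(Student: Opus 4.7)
The plan is to understand the boundary behavior of $L(\nu,\Sigma)$ on its domain $\R_{>0}\times\SPD(d)$ and to observe that the dichotomy in the theorem corresponds exactly to the two ways a minimizing sequence can fail to stay pre-compact in the interior. To rule out all boundary escapes except $\nu\to\infty$, one checks that $L\to+\infty$ as $\nu\to 0^+$ (using $2\log\Gamma(\nu/2)\sim -2\log\nu$ against the other $\log\nu$-terms), as $\Sigma$ approaches a singular matrix, and as $\|\Sigma\|\to\infty$; the last two statements are exactly the content of the two auxiliary lemmas announced before the theorem, and they use both the linear independence and the weight bound in Assumption~\ref{Ass:lin_ind}.

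For case (i), pick a minimizing sequence $(\nu_r,\Sigma_r)_r$ with a finite cluster point $\nu^*\in(0,\infty)$, extract a subsequence with $\nu_r\to\nu^*$, and apply the coercivity lemmas uniformly for $\nu$ in a small compact neighbourhood of $\nu^*$. This forces $(\Sigma_r)_r$ into a compact subset of $\SPD(d)$; a further subsequence gives $\Sigma_r\to\hat\Sigma\in\SPD(d)$, and continuity of $L$ on the interior shows that $(\nu^*,\hat\Sigma)$ attains the infimum. Since $L$ is smooth on its open domain, any interior minimizer must satisfy \eqref{mult_cond_a}--\eqref{ML_nu} and is therefore a critical point.

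For case (ii), every minimizing sequence satisfies $\nu_r\to\infty$, and the central step is an expansion of $L(\nu,\Sigma)$ in the large-$\nu$ regime. Using the identity $-\nu\log\nu+(d+\nu)\log\nu=d\log\nu$, the Stirling-type asymptotic $2\log\Gamma\bigl(\tfrac{\nu+d}{2}\bigr)-2\log\Gamma\bigl(\tfrac{\nu}{2}\bigr)=d\log(\nu/2)+O(1/\nu)$, and $(d+\nu)\log\bigl(1+\tfrac{1}{\nu}x_i^\tT\Sigma^{-1}x_i\bigr)=x_i^\tT\Sigma^{-1}x_i+O(1/\nu)$, one obtains
\begin{equation*}
L(\nu,\Sigma)=L_\infty(\Sigma)+o(1),\qquad L_\infty(\Sigma)\coloneqq d\log 2+\sum_{i=1}^n w_i\,x_i^\tT\Sigma^{-1}x_i+\log|\Sigma|,
\end{equation*}
with the remainder uniform on compact subsets of $\SPD(d)$. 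The limit $L_\infty$ is, up to an additive constant, the Gaussian negative log-likelihood; it is coercive on $\SPD(d)$ and has the unique minimizer $\hat\Sigma=\sum_i w_i x_i x_i^\tT$, which is SPD by Assumption~\ref{Ass:lin_ind}. Combining the expansion with coercivity of $L_\infty$ confines $(\Sigma_r)_r$ to a compact set, and passing to a convergent subsequence and using $L(\nu_r,\Sigma_r)\to\inf L=L_\infty(\hat\Sigma)$ forces the limit to minimize $L_\infty$, so that $\Sigma_r\to\hat\Sigma$ by uniqueness.

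The main obstacle is the uniformity of the large-$\nu$ expansion: the Stirling and Taylor remainders depend on $\Sigma$ through the quadratic forms $x_i^\tT\Sigma^{-1}x_i$, so one has to either restrict to a priori compact sets in $\SPD(d)$ (justified by the coercivity lemmas) or produce $\Sigma$-uniform error bounds strong enough to prevent $(\Sigma_r)_r$ from drifting to the boundary while $\nu_r\to\infty$. The remaining technical content is deploying the coercivity lemmas uniformly in $\nu$ on compact intervals in case (i); the other boundary divergences ($\nu\to 0^+$, $\Sigma$ singular, $\|\Sigma\|\to\infty$) are a routine check directly from the closed form of $L$.
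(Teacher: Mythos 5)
Your outline follows essentially the same route as the paper: Lemma \ref{lem:rike} and Lemma \ref{lem:lik} supply the coercivity needed to keep $(\Sigma_r)_r$ in a compact subset of $\mathrm{SPD}(d)$, case (i) is a standard subsequence-plus-continuity argument, and case (ii) rests on the large-$\nu$ limit $L(\nu,\Sigma)\to d\log 2+\sum_i w_i x_i^\tT\Sigma^{-1}x_i+\log\abs{\Sigma}$ together with the uniqueness of its minimizer $\hat\Sigma$ and a comparison against the sequence $(\nu_r,\hat\Sigma)$. The paper phrases the case (ii) limit along convergent subsequences (via the locally uniform convergence of $(1+x/\nu_r)^{\nu_r}$ to $\e^x$) rather than as a uniform expansion on compacta, and it re-derives the a priori boundedness of $(\Sigma_r)_r$ inline using the monotonicity of $k\mapsto(1+x/k)^k$ and the fixed-$\nu$ result of \cite[Theorem 4.3]{LS2019}, but these are presentational differences; your resolution of the "main obstacle" by first securing compactness and only then invoking the expansion is exactly what the paper does.

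One point you should not dismiss as routine: you claim $L\to+\infty$ as $\nu\to 0^+$ can be checked "directly from the closed form of $L$" via $2\log\Gamma(\nu/2)\sim-2\log\nu$, and you attribute only the $\Sigma$-boundary statements to the auxiliary lemmas. In fact the first auxiliary lemma (Lemma \ref{lem:rike}) is precisely about the case $\nu_r\to 0$, and it is the most delicate part of the whole argument: the divergence of $g(\nu)=2\log\Gamma(\nu/2)-2\log\Gamma((d+\nu)/2)-\nu\log\nu$ can a priori be cancelled by $L_{\nu_r}(\Sigma_r)\to-\infty$ when $\Sigma_r$ degenerates simultaneously, so one must show that $L_{\nu_r}(\Sigma_r)$ is bounded below along a subsequence for \emph{arbitrary} $\Sigma_r$. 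That requires the eigenvalue decomposition, the partition of the samples into the sets $I_k$, and the induction using both the linear independence and the bound $\max_i w_i<1/d$ from Assumption \ref{Ass:lin_ind}. Your plan is correct once this is delegated to Lemma \ref{lem:rike}, but as written the "routine check" would fail for joint boundary escapes $(\nu_r,\lambda_{\min}(\Sigma_r))\to(0,0)$.
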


\begin{proof}
\textbf{Case 1:} Assume that there exists a minimizing sequence $(\nu_r,\Sigma_r)_r$ of $L$, such that $(\nu_r)_r$ has a bounded subsequence. 
In particular, using Lemma \ref{lem:rike}, we have that $(\nu_r)_r$ has a cluster point $\nu^* >0$ 
and a subsequence $(\nu_{r_k})_k$ converging to $\nu^*$. 
Clearly, the sequence $(\nu_{r_k},\Sigma_{r_k})_k$ is again a minimizing sequence so that we skip the second index in the following. 
By Lemma \ref{lem:lik}, the set $\overline{\{\Sigma_r:r\in\mathbb N\}}$ 
is a compact subset of $\mathrm{SPD}(d)$. 
Therefore there exists a subsequence $(\Sigma_{r_k})_k$ 
which converges to some $\Sigma^*\in\mathrm{SPD}(d)$. Now we have by continuity of $L(\nu,\Sigma)$ that
$$
L(\nu^*,\Sigma^*)=\lim\limits_{k\to\infty}L(\nu_{r_k},\Sigma_{r_k})=\min_{(\nu,\Sigma)\in\R_{>0}\times\mathrm{SPD}(d)} L(\nu,\Sigma).
$$
\textbf{Case 2:} Assume that for every minimizing sequence $(\nu_r,\Sigma_r)_r$ it holds that $\nu_r\to\infty$ as $r\to \infty$. 
We rewrite the likelihood function as
\begin{align}
L(\nu,\Sigma) 
&=
2\log \left( \frac{\Gamma\left(\frac\nu2\right)\frac\nu2^\frac{d}{2} }{ \Gamma\left(\frac{d+\nu}{2} \right)} \right)
+d \log(2)
+(d+\nu) \sum_{i=1}^n w_i
\log \left(1+\frac1\nu x_i^\tT \Sigma^{-1}x_i\right)+\log(\abs{\Sigma}).
\end{align}
Since
\[\lim_{\nu \rightarrow \infty}  \frac{\Gamma\left(\frac\nu2\right)\frac\nu2^\frac{d}{2} }{ \Gamma\left(\frac{d+\nu}{2} \right)}=1,\]
we obtain
\begin{equation}\label{eq:asym_likelihood}
\lim\limits_{r\to\infty}L(\nu_r,\Sigma_r)=
d\log(2)+ \lim_{\nu_r \rightarrow \infty} (d+\nu_r)\sum_{i=1}^nw_i\log\left(1+\frac1{\nu_r} x_i^\tT \Sigma_r^{-1}x_i\right)+\log(\abs{\Sigma_r}).
\end{equation}
Next we show by contradiction that $\overline{\{\Sigma_r:r\in\mathbb N\}}$ is in $\mathrm{SPD}(d)$ and bounded:
Denote the eigenvalues of $\Sigma_r$ by $\lambda_{r1}\geq\cdots\geq\lambda_{rd}$. 
Assume that either $\{\lambda_{r1}:r\in\mathbb N\}$ is unbounded 
or that $\{\lambda_{rd}:r\in\mathbb N\}$ 
has zero as a cluster point. 
Then, we know by  \cite[Theorem 4.3]{LS2019}
that there exists a subsequence of $(\Sigma_r)_r$, which we again denote by $(\Sigma_r)_r$, 
such that for any fixed $\nu>0$ it holds
\begin{equation}\label{eq:fixed_nu_to_infty}
\lim\limits_{r\to\infty} L_\nu (\Sigma_r)=\infty.
\end{equation}
Since $k\mapsto\left(1+\frac{k}{x}\right)^k$ is monotone increasing, for $\nu_r\geq d+1$ we have 
\begin{align}
(d+\nu_r)\sum_{i=1}^n w_i \log\left(1+\frac1{\nu_r}x_i^\tT \Sigma_r^{-1}x_i\right)
&=\sum_{i=1}^n w_i \log\left(\left(1+\frac1{\nu_r}x_i^\tT \Sigma_r^{-1}x_i\right)^{\nu_r+d}\right)\\
&\geq \sum_{i=1}^n w_i \log\left(\left(1+\frac1{\nu_r}x_i^\tT \Sigma_r^{-1}x_i\right)^{\nu_r}\right)\\
&\geq \sum_{i=1}^n w_i \log\left(\left(1+\frac1{d+1}x_i^\tT \Sigma_r^{-1}x_i\right)^{d+1}\right)\\
&= (d+1)\sum_{i=1}^n w_i \log\left(1+\frac1{d+1}x_i^\tT \Sigma_r^{-1}x_i\right)\\
&\geq (d+1)\sum_{i=1}^n w_i \log\left(1+x_i^\tT \Sigma_r^{-1}x_i\right) - \log(d+1)^{d+1}.
\end{align}
By \eqref{eq:asym_likelihood} this yields
\begin{align}
\lim\limits_{r\to\infty}L(\nu_r,\Sigma_r)
&\geq 
d\log(2) - \log(d+1)^{d+1} + \lim\limits_{r\to\infty} (d+1)\sum_{i=1}^n w_i \log\left(1+x_i^\tT \Sigma_r^{-1}x_i\right)+\log(\abs{\Sigma_r})
\\
&=d\log(2)- \log(d+1)^{d+1} +\lim\limits_{r\to\infty}L_1(\Sigma_r)=\infty.
\end{align}
This contradicts the assumption that $(\nu_r,\Sigma_r)_r$ is a minimizing sequence of $L$.
Hence $\overline{\{\Sigma_r:r\in\mathbb N\}}$ is a bounded subset of $\mathrm{SPD}(d)$.
\\[1ex]
Finally, we show that any subsequence of $(\Sigma_r)_r$ has a subsequence which converges to $\hat\Sigma=\sum_{i=1}^n w_i x_ix_i^\tT$. 
Then the whole sequence $(\Sigma_r)_r$ converges to $\hat \Sigma$.\\
Let $(\Sigma_{r_k})_k$ be a subsequence of $(\Sigma_r)_r$. 
Since it is bounded, it has a convergent subsequence $(\Sigma_{r_{k_l}})_l$ 
which converges to some $\tilde\Sigma\in\overline{\{\Sigma_r:r\in\mathbb N\}}\subset\mathrm{SPD}(d)$. 
For simplicity, we denote $(\Sigma_{r_{k_l}})_l$ again by $(\Sigma_r)_r$. 
Since $(\Sigma_r)_r$ is converges, we know that also $(x_i^\tT \Sigma_r^{-1}x_i)_r$ converges and is bounded. 
By $\lim\limits_{r\to\infty}\nu_r=\infty$ we know that the functions $x\mapsto\left(1+\frac{x}{\nu_r}\right)^{\nu_r}$ 
converge locally uniformly to $x\mapsto \exp(x)$ as $r\to\infty$. 
Thus we obtain
\begin{align}
&
\lim\limits_{r\to\infty}(d+\nu_r)\sum_{i=1}^n w_i \log\left(1+\frac1{\nu_r}x_i^\tT \Sigma_r^{-1}x_i\right)\\
&=
\lim\limits_{r\to\infty}\sum_{i=1}^n w_i\log\left(\left(1+\frac1{\nu_r}x_i^\tT \Sigma_r^{-1}x_i\right)^{d+\nu_r}\right)
\\
&=
\lim\limits_{r\to\infty} \sum_{i=1}^n w_i \log\left(\lim\limits_{r\to\infty}\left(1+\frac1{\nu_r}x_i^\tT 
\Sigma_r^{-1}x_i\right)^{\nu_r}\left(1+\frac1{\nu_r}x_i^\tT \Sigma_r^{-1}x_i\right)^d\right)\\
&=
\lim\limits_{r\to\infty}\sum_{i=1}^n w_i \log\left(\lim\limits_{r\to\infty}\left(1+\frac1{\nu_r}x_i^\tT \Sigma_r^{-1}x_i\right)^{\nu_r}\right)\\
&=
\sum_{i=1}^n w_i \log\left(\exp(x_i^\tT\tilde{\Sigma}^{-1}x_i)\right)=\sum_{i=1}^n w_ix_i^\tT \tilde\Sigma^{-1}x_i.
\end{align}
Hence we have 
\begin{align}
\inf_{(\nu,\Sigma)\in\R_{>0}\times\mathrm{SPD}(d)}L(\nu,\Sigma)=\lim\limits_{r\to\infty} L(\nu_r,\Sigma_r)
=d\log(2)+\sum_{i=1}^n w_ix_i^\tT\tilde\Sigma^{-1}x_i+\log(|\tilde\Sigma|).
\end{align}
By taking the derivative with respect to $\Sigma$ we see that the right-hand side is minimal if and only if 
$\Sigma=\hat\Sigma=\sum_{i=1}^nw_ix_ix_i^\tT$. 
On the other hand, by similar computations as above we get
\begin{align}
\inf_{(\nu,\Sigma)\in\R_{>0}\times\mathrm{SPD}(d)}L(\nu,\Sigma)
&\leq
\lim\limits_{r\to\infty} L(\nu_r,\hat\Sigma)\\
&= d\log(2) + \log(|\hat\Sigma|) 
+ \lim_{v_r \rightarrow \infty} (d+\nu_r) \sum_{i=1}^n w_i \log \big(1+\frac1{\nu_r}x_i^\tT \hat \Sigma^{-1}x_i\big)\\
&= d\log(2) + \log(|\hat\Sigma|) + \sum_{i=1}^n w_ix_i^\tT \hat\Sigma^{-1}x_i+\log(|\hat\Sigma|),
\end{align}
so that $\tilde\Sigma=\hat\Sigma$. This finishes the proof.
\end{proof}

\section{Zeros of $F$}\label{sec:zero_F}
In this section, we are interested in the existence of solutions of  
\eqref{ML_nu}, i.e., in zeros of $F$ for arbitrary fixed $\mu$ and $\Sigma$. 
Setting $x \coloneqq \frac{\nu}{2} > 0$, $t \coloneqq \frac{d}{2}$ and
$$
s_i \coloneqq \frac12 (x_i - \mu)^\tT \Sigma^{-1} (x_i - \mu), \quad i=1,\ldots,n.
$$
we rewrite the function $F$ in \eqref{ML_nu} as
\begin{align} \label{function_F}
F(x) &= \phi (x) - \phi(x+t) + 
\sum_{i=1}^n w_i \left( \frac{x+t}{x +  s_i}- \log\left(\frac{x + t}{x +  s_i} \right) - 1\right)
\\
&= \sum_{i=1}^n w_i F_{s_i} (x)
=
\sum_{i=1}^n w_i \big( A(x) + B_{s_i}(x) \big),
\end{align}
where 
\begin{equation} \label{Fs}
F_s(x) \coloneqq A(x) + B_s(x)
\end{equation} 
and
\begin{align}\label{A+B}
A(x) \coloneqq \phi (x) - \phi(x+t),\qquad
B_s (x) \coloneqq  \frac{x+t}{x +  s}- \log\left(\frac{x + t}{x +  s} \right) - 1.
\end{align}
The digamma function $\psi$ and $\phi = \psi - \log(\cdot)$ are well examined in the literature, see \cite{AS65}.
The function $\phi(x)$ is the expectation value of a random variable which is $\Gamma(x,x)$ distributed.
It holds $-\frac{1}{x} < \phi(x) < - \frac{1}{2x}$ and it is well-known that 
$-\phi$ is \emph{completely monotone}. This implies that the negative of $A$
is also completely monotone, i.e. for all $x > 0$ and $m \in \mathbb N_0$ we have
\begin{equation}\label{cm_A}
(-1)^{m+1} \phi^{(m)} (x) > 0, \qquad (-1)^{m+1} A^{(m)} (x) > 0,
\end{equation}
in particular $A < 0$, $A' > 0$ and $A'' < 0$.
Further, it is easy to check that
\begin{align}
&\lim_{x\rightarrow 0} \phi(x) = -\infty, \qquad \lim_{x\rightarrow \infty} \phi(x) = 0^-,\label{asymp_phi}\\
&\lim_{x\rightarrow 0} A(x) = -\infty, \qquad \lim_{x\rightarrow \infty} A(x) = 0^-.\label{asymp_A}
\end{align}
On the other hand, we have that $B(x) \equiv 0$ if $s=t$ in which case $F_s = A < 0$ and has therefore no zero.
If $s \not = t$, then $B_s$ is \emph{completely monotone}, i.e., for all $x > 0$ and $m \in \mathbb N_0$,
\begin{equation}\label{cm_B}
 (-1)^m B_s^{(m)} (x) > 0,
\end{equation}
in particular $B_s> 0$, $B_s' < 0$ and $B_s'' >0$,
and
\begin{equation}\label{asymp_B}
B_s(0) = \frac{t}{s} - \log \left( \frac{t}{s} \right) - 1 > 0, \qquad \lim_{x\rightarrow \infty} B_s (x) = 0^+.
\end{equation}
Hence we have
\begin{equation} \label{eq1}
\lim_{x \rightarrow 0} F_s(x) = -\infty, \qquad \lim_{x \rightarrow \infty} F_s(x) = 0.
\end{equation}

If $X \sim {\mathcal N}(\mu,\Sigma)$ is a $d$-dimensional random vector, then $Y \coloneqq (X-\mu)^\tT \Sigma^{-1} (X-\mu) \sim \chi_d^2$
with $\mathbb E (Y) = d$ and $\Var(Y) = 2d$. Thus we would expect 
that for samples $x_i$ from such a random variable $X$ 
the corresponding values $(x_i - \mu)^\tT \Sigma^{-1} (x_i - \mu)$ lie 
with high probability in the interval $[d - \sqrt{2d},d+ \sqrt{2d}]$, respective $s_i \in [t -\sqrt{t}, t + \sqrt{t}]$.
These considerations are reflected in the following theorem and corollary.

\begin{Theorem}\label{thm:ensure}
For $F_s: \R_{>0} \rightarrow \R$ given by \eqref{Fs} the following relations hold true:
\begin{itemize}
\item[i)]
If $s \in [t - \sqrt{t},t+ \sqrt{t}] \cap \mathbb R_{>0}$, then $F_s(x) < 0$ 
for all $x >0$ so that $F_s$ has no zero.
\item[ii)]
If $s > 0$ and $s \not \in [t - \sqrt{t},t+ \sqrt{t}]$,
then there exists 
$x_+$ such that $F_s(x) >0$ for all $x \ge x_+$. 
In particular, $F_s$ has a zero.
\end{itemize}
\end{Theorem}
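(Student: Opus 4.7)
The plan is to handle the two cases with rather different techniques. For Case (ii), my approach is an asymptotic expansion at infinity. Using the standard expansion $\phi(x) = -\tfrac{1}{2x} - \tfrac{1}{12 x^2} + O(x^{-4})$, I compute $A(x) = -\tfrac{t}{2x^2} + O(x^{-3})$. For $B_s$, setting $r = \tfrac{x+t}{x+s} = 1 + \tfrac{t-s}{x+s}$ and inserting the Taylor expansion $r - \log r - 1 = \tfrac{1}{2}(r-1)^2 + O((r-1)^3)$ about $r = 1$, I find $B_s(x) = \tfrac{(t-s)^2}{2x^2} + O(x^{-3})$. Summing,
\[
F_s(x) = \frac{(t-s)^2 - t}{2 x^2} + O(x^{-3}),
\]
whose leading coefficient is strictly positive precisely when $|s - t| > \sqrt{t}$. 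Under the hypothesis of (ii) this is the case, so $F_s(x) > 0$ for all $x \ge x_+$ once $x_+$ is chosen sufficiently large. Combined with $\lim_{x \to 0^+} F_s(x) = -\infty$ from \eqref{eq1} and continuity of $F_s$, the intermediate value theorem supplies a zero in $(0, x_+)$.

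For Case (i) my strategy is to reduce to the endpoints in $s$. The key observation is that for fixed $x > 0$,
\[
\partial_s F_s(x) = -\frac{x+t}{(x+s)^2} + \frac{1}{x+s} = \frac{s-t}{(x+s)^2},
\]
so $s \mapsto F_s(x)$ is strictly decreasing on $(0,t)$ and strictly increasing on $(t,\infty)$, with global minimum $F_t(x) = A(x) < 0$. Hence on $[t - \sqrt{t}, t + \sqrt{t}] \cap \R_{>0}$ the supremum over $s$ is attained at one of the endpoints $s_\pm = t \pm \sqrt{t}$ (with $s_-$ present only when $t > 1$), and the problem reduces to showing $F_{s_\pm}(x) < 0$ for every $x > 0$. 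At these endpoints $(t - s_\pm)^2 = t$, so the $O(x^{-2})$ leading term in Case (ii)'s expansion cancels; a next-order expansion shows the $O(x^{-3})$ coefficient to be strictly negative, giving $F_{s_\pm}(x) \to 0^-$ as $x \to \infty$. To lift this to all of $(0, \infty)$ I would prove that $F_{s_\pm}$ is strictly monotone increasing on $(0, \infty)$, which in view of $B_{s_\pm}'(x) = -t/[(x+s_\pm)^2 (x+t)]$ is equivalent to
\[
\phi'(x) - \phi'(x+t) > \frac{t}{(x + s_\pm)^2 (x+t)}, \qquad x > 0,
\]
and can be attacked via classical sharp bounds such as $\tfrac{1}{2x^2} < \phi'(x) < \tfrac{1}{2x^2} + \tfrac{1}{6 x^3}$.

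The delicate step is this last monotonicity inequality at the boundary values $s_\pm$. Because $F_{s_\pm}(x)$ decays to $0$ only at the third-order rate $O(x^{-3})$, the required estimate is asymptotically tight and admits no slack in the constants; obtaining it uniformly on $(0, \infty)$ would likely require separate treatments for small and large $x$, with the small-$x$ regime controlled by the dominant $\phi'(x) \sim 1/(2x^2)$ blow-up and the large-$x$ regime by matching both sides to order $x^{-3}$. This endpoint analysis is the technical heart of Case (i), whereas the reduction via $\partial_s F_s$ is what makes it even tractable.
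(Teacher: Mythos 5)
Your treatment of part (ii) is correct and follows a genuinely different route from the paper: the paper bounds $F_s'$ from above by a rational function (via an Euler--Maclaurin comparison of the trigamma series with an integral) and deduces that $F_s$ decreases to $0^+$ on some $[x_+,\infty)$, whereas you read off the sign of $F_s$ at infinity directly from the expansion $F_s(x)=\frac{(t-s)^2-t}{2x^2}+O(x^{-3})$; this is shorter and perfectly rigorous given the standard asymptotics of $\psi$. Likewise, your reduction of part (i) to the endpoints $s_\pm=t\pm\sqrt t$ via $\partial_s F_s(x)=\frac{s-t}{(x+s)^2}$ is a nice structural idea that the paper does not use (the paper instead proves $F_s'>0$ for all $s$ in the interval simultaneously, via Descartes' rule applied to the numerator of a rational minorant of $F_s'$).

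However, part (i) is not a proof: everything hinges on the monotonicity inequality $\phi'(x)-\phi'(x+t)>\frac{t}{(x+s_\pm)^2(x+t)}$ for all $x>0$, which you leave open, and the tool you propose for it cannot close the gap. Bounding $\phi'(x)$ from below by $\frac{1}{2x^2}$ and $\phi'(x+t)$ from above by $\frac{1}{2(x+t)^2}+\frac{1}{6(x+t)^3}$ underestimates the left-hand side by $\frac{1}{6x^3}+O(x^{-5})$, a term of the \emph{same} order $x^{-3}$ as both sides of the inequality, while your own expansion shows the available margin is only $\frac{t(1-t+4s_\pm)}{2x^4}+O(x^{-5})$. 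So separate one-sided bounds at $x$ and at $x+t$ lose far more than the slack permits for large $x$; one has to estimate the difference $\phi'(x)-\phi'(x+t)$ as a whole, which is precisely what the paper does by writing $\psi'(x)-\psi'(x+t)=\sum_{k\ge 0}\bigl(\frac{1}{(x+k)^2}-\frac{1}{(x+k+t)^2}\bigr)$, comparing this sum with the integral $\frac{t}{x(x+t)}$ and retaining the trapezoidal correction $\frac12\bigl(\frac{1}{x^2}-\frac{1}{(x+t)^2}\bigr)$. Two further loose ends in your sketch: for $t\le 1$ the relevant lower ``endpoint'' is the limit $s\to 0^+$ rather than $s_-$, and at $t=1$, $s=0$ one has the exact identity $F_0\equiv 0$ (using $\psi(x+1)=\psi(x)+\frac1x$), so the margin vanishes identically and the strict-monotonicity route degenerates there; both cases need explicit treatment. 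As it stands, part (i) is a plausible plan whose technical core is missing.
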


\begin{proof}
We have 
\begin{align}
F_s'(x) 
&= 
\phi'\left(x\right) - \phi'(x+t) - \frac{(s-t)^2}{(x +s)^2(x+t)}\\
&= 
\psi'(x) -  \psi'(x+t) - \frac{t}{x(x+t)} - \frac{(s-t)^2}{(x +s)^2(x+t)}.
\end{align}
We want to sandwich $F'_s$ between two rational functions $P_s$ and $P_s + Q$
which zeros can easily be described.

Since the trigamma function $\psi'$ has the series representation 
\begin{equation}\label{psi_series}
\psi'(x) = \sum_{k=0}^{\infty} \frac{1}{(x+k)^{2}},
\end{equation}
see~\cite{AS65},
we obtain
\begin{equation} \label{function_1}
F_s'(x) = \sum_{k=0}^\infty\frac{1}{(x+k)^2} - \frac{1}{(x+k+t)^2} - \frac{t}{x(x+t)} - \frac{(s-t)^2}{(x+s)^2(x+t)}.
\end{equation}
For $x > 0$, we have
$$I(x) = \int_0^\infty \underbrace{\frac{1}{(x+u)^2}-\frac{1}{(x+u+t)^2}}_{g(u)} \, du 
=\frac1x-\frac1{x+t} = \frac{t}{(x+t)x}.$$
Let $R(x)$ and $T(x)$ denote the rectangular and trapezoidal rule, respectively, 
for computing the integral with step size 1.
 Then we verify
 \[R(x)=\sum_{k=0}^\infty g(k)=\sum_{k=0}^\infty \frac1{(x+k)^2}-\frac1{(x+k+t)^2}\]
 so that
 \begin{align}
 F_s'(x) &= \left( R(x) - T(x) \right) + \left( T(x) - I(x) \right)  - \frac{(s-t)^2}{(x+s)^2(x+t)}\\
      & = \frac12 \left(\frac{1}{x^2} -\frac{1}{(x+t)^2} \right)+ \left( T(x) - I(x) \right) - \frac{(s-t)^2}{(x+s)^2(x+t)}.
 \end{align}
By considering the first and second derivative of $g$ we see the integrand  in $I(x)$
is strictly decreasing and strictly convex.
Thus,
  $
  P_s(x) < F_s'(x)
  $,
  where
  \begin{align}
  P_s(x) 
  &\coloneqq  \frac12 \left(\frac{1}{x^2} -\frac{1}{(x+t)^2} \right) - \frac{(s-t)^2}{(x+s)^2(x+t)}
  = \frac{(2tx + t^2)(x+s)^2 - (s-t)^2 x^2(x+t)}{2x^2(x+s)^2(x+t)^2}\\
  &= \frac{p_s(x)}{2x^2(x+s)^2(x+t)^2}.
  \end{align}
  with
  $
  p_s(x) \coloneqq a_3 x^3 + a_2 x^2 + a_1 x + a_0
  $
  and
  $$
  a_0 = t^2s^2 > 0,\quad a_1 = 2st(s+t) > 0, \quad a_2 = t(4s+t - (s-t)^2),\quad a_3 = 2\left( t- (s-t)^2 \right) .
  $$
  We have 
  \begin{equation} \label{main_coeff}
	a_3 \ge 0 \quad \Longleftrightarrow \quad s \in [t - \sqrt{t}, t + \sqrt{t}]
	\end{equation}
  and
  $$a_2 \geq 0 \quad \Longleftrightarrow \quad 
	s \in [t+2-\sqrt{4+ 5t}, t+2 + \sqrt{4+ 5t}] \supset [t - \sqrt{t}, t + \sqrt{t}]$$
  for $t \geq 1$. For $t=\frac12$, it holds $[t+2-\sqrt{4+ 5t}, t+2 + \sqrt{4+ 5t}]\supset [0,t+\sqrt{t}]$.
	
	Thus, for $s \in [t - \sqrt{t}, t + \sqrt{t}]$, 
  by the sign rule of Descartes, $p_s(x)$ has no positive zero
	which implies
   $$
  0 \le P_s(x) < F_s'(x) \quad \mathrm{for} \quad s \in [t - \sqrt{t}, t + \sqrt{t}] \cap \mathbb R _{>0}.
  $$
	Hence, the continuous function $F_s$ is monotone increasing  
and by \eqref{eq1} we obtain
$F_s (x) < 0$ for all $x > 0$ if $s \in [t - \sqrt{t}, t + \sqrt{t}] \cap \mathbb R _{>0}$.
\\[1ex]  
Let $s>0$ and $s \not \in [t - \sqrt{t}, t + \sqrt{t}]$.
By
    $$
  T(x)-I(x)=\sum_{k=0}^\infty \left(   \frac12(g(k+1)+g(k)) - \int_0^1 g(k+u) \, du \right)
  $$
and Euler's summation formula, we obtain
  $$
  T(x) - I(x) = \sum_{k=0}^\infty \frac{1}{12} \left( g'(k+1) - g'(k) \right) - \frac{1}{720} g^{(4)}(\xi_k), \quad \xi_k \in (k,k+1)
  $$
  with $g'(u) = -\frac{2}{(x+u)^3}+\frac{2}{(x+u+t)^3}$ and $g^{(4)}(u) =  \frac{5!}{(x+u)^6}-\frac{5!}{(x+u+t)^6}$,
  so that
  \begin{align} \label{**}
  T(x) - I(x) =&  -\frac{1}{12} g'(0) +\sum_{k=0}^\infty \frac16\frac1{(x+\xi_k+t)^6}-\frac16\frac1{(x+\xi_k)^6}\\
  <&- \frac{1}{12}g'(0)
	=\frac16\frac{3t x^2  + 3t^2x + t^3}{x^3(x+t)^3}.
   \end{align}
  Therefore, we conclude
  $$
  F_s'(x) < P_s(x) + \underbrace{\frac16\frac{3t x^2  + 3t^2x + t^3}{x^3(x+t)^3}}_{Q(x)} = 
	\frac{p_s(x) x (x+t) + (t x^2  + t^2x + \frac13 t^3)(x+s)^2}{2 x^3(x+s)^2(x+t)^3} 
	$$
  The main coefficient of $x^5$ of the polynomial in the numerator is $2(t-(s-t)^2)$ which fulfills \eqref{main_coeff}.
	Therefore, if $s \not \in [t - \sqrt{t}, t + \sqrt{t}]$, then there exists $x_+$ large enough 
	such that the numerator becomes smaller than zero for all $x \ge x_+$.
 Consequently, $F'_s(x) \leq P_s(x) + Q(x)<0$ for all $x \geq x_+$.
  Thus, $F_s$ is decreasing on $[x_+,\infty)$. 
	By \eqref{eq1}, we conclude that $F_s$ has a zero.
	\end{proof}

The following corollary states that $F_s$ has exactly one zero if $s > t+ \sqrt{t}$.	
Unfortunately we do not have such a results for $s < t - \sqrt{t}$.
	
\begin{Corollary}
Let $F_s: \R_{>0} \rightarrow \R$ be given by \eqref{Fs}.	If $s >t + \sqrt{t}$, $t \ge 1$, then 
$F_s$ has exactly one zero.
\end{Corollary}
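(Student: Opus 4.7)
The existence of at least one zero is already ensured by Theorem \ref{thm:ensure}, since $s>t+\sqrt t$ places $s$ outside the interval $[t-\sqrt t,\,t+\sqrt t]$. The content of the corollary is therefore uniqueness, and my plan is to combine several pieces of information extracted from the proof of that theorem.

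The first step is to localise the zeros. From the proof of Theorem \ref{thm:ensure} one has $F_s'(x)\le P_s(x)+Q(x)<0$ on $[x_+,\infty)$. Together with $\lim_{x\to\infty}F_s(x)=0$ this forces $F_s(x)>0$ for every $x\ge x_+$, so every zero of $F_s$ lies in $(0,x_+)$. The second step is to identify an initial interval of strict monotonicity using the companion bound $F_s'(x)>P_s(x)$. The cubic $p_s(x)=a_3x^3+a_2x^2+a_1x+a_0$ has $a_0,a_1>0$ by inspection, $a_3<0$ under the hypothesis $s>t+\sqrt t$, and $a_2\ge 0$ on $[t+2-\sqrt{4+5t},\,t+2+\sqrt{4+5t}]$ (an interval containing $[t-\sqrt t,\,t+\sqrt t]$ for $t\ge 1$, as observed in the proof of Theorem \ref{thm:ensure}). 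In either sub-case the coefficient sequence $(a_0,a_1,a_2,a_3)$ has exactly one sign change, so Descartes' rule of signs yields a unique positive root $x^*$ of $p_s$, with $p_s>0$ on $(0,x^*)$. Hence $F_s'>0$ on $(0,x^*)$ and $F_s$ is strictly increasing there, which together with $F_s(0^+)=-\infty$ yields at most one zero of $F_s$ in $(0,x^*]$.

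To conclude uniqueness it then suffices to rule out additional zeros in the intermediate interval $(x^*,x_+)$. The cleanest route is to establish that $F_s$ is unimodal on $(0,\infty)$, i.e.\ that $F_s'$ has exactly one zero; combined with $F_s(0^+)=-\infty$, $F_s(\infty)=0$, and $F_s$ positive on $[x_+,\infty)$, unimodality forces $F_s$ to cross zero exactly once, on its increasing branch.

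The main obstacle is precisely this unimodality step: the two-sided bounds $P_s<F_s'<P_s+Q$ leave a gap in the interior interval $(x^*,x_+)$ where the sign of $F_s'$ is not a priori controlled. I expect the argument to require either sharpening the Euler--Maclaurin estimates of Theorem \ref{thm:ensure} so that the upper and lower bounds on $F_s'$ share the same sign-change pattern, or exploiting the Laplace-transform representations $-A(x)=\int_0^\infty e^{-xu}k(u)\,du$ and $B_s(x)=\int_0^\infty e^{-xu}\rho_s(u)\,du$ with explicit positive kernels $k,\rho_s$ together with the variation-diminishing property of the Laplace transform applied to the signed integrand $\rho_s-k$.
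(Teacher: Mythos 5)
Your reduction is correct as far as it goes: existence follows from Theorem \ref{thm:ensure}(ii), the lower bound $F_s'>P_s$ together with the sign pattern $(+,+,\pm,-)$ of $p_s$ gives strict monotonicity of $F_s$ on an initial interval, and uniqueness would indeed follow if one knew that $F_s'$ changes sign at most once. But that last step is exactly the content of the corollary, and you explicitly leave it open -- the two routes you sketch (sharpened Euler--Maclaurin bounds, or a variation-diminishing argument for the Laplace transforms of $-A$ and $B_s$) are not carried out, so the proposal as written does not prove the statement.

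The paper closes precisely this gap with a different device: it multiplies $F_s'$ by the positive factor $(x+s)^2(x+t)$ and studies $h_s(x)\coloneqq F_s'(x)(x+s)^2(x+t)$, which by \eqref{function_1} equals $(x+s)^2(x+t)\bigl(\sum_{k\ge 0}\frac{1}{(x+k)^2}-\frac{1}{(x+k+t)^2}-\frac{t}{x(x+t)}\bigr)-(s-t)^2$. One then shows $h_s'<0$ for all $x>0$: after bounding $2(x+s)(x+t)+(x+s)^2\le 3(x+s)^2$ (using $s>t$), the derivative is dominated by $(x+s)^2(R(x)-I(x))$ for an auxiliary integrand $g=g_1+g_2$, and a second round of rectangle/trapezoid/Euler-summation comparisons yields $R(x)-I(x)<\frac{t}{2}\,\frac{(-3t+3)x^2+(-5t^2+3t)x-2t^3+t^2}{x^3(x+t)^3}$, whose numerator has all coefficients $\le 0$ exactly when $t\ge 1$ -- this is where the hypothesis $t\ge 1$ enters. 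Since $h_s$ has the same sign as $F_s'$ and is strictly decreasing, $F_s'$ stays negative after its first zero, so $F_s'$ has at most one zero and $F_s$ is unimodal; your boundary analysis then finishes the argument. If you want to complete your write-up, adopting this multiplication-by-a-positive-polynomial trick (which removes the troublesome $(x+s)^{-2}(x+t)^{-1}$ pole from the term involving $(s-t)^2$ and makes the monotonicity tractable) is the missing idea.
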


\begin{proof}
	By Theorem \ref{thm:ensure}ii) 	and since $\lim_{x\rightarrow 0} F_s(x) = -\infty$ and 
	$\lim_{x\rightarrow \infty} = 0^+$, it remains to prove that $F_s'$ has at most one zero.
	Let $x_0>0$ be the smallest number such that $F_s'(x_0)=0$.
	We prove that $F_s'(x)<0$  for all $x>x_0$.
	To this end, we show that $h_s(x)\coloneqq F_s'(x)(x+s)^2(x+t)$ is strictly decreasing. 
	By \eqref{function_1} we have
	\begin{align} \label{function_h}
	h_s(x) &= (x+s)^2(x+t)\left(\sum_{k=0}^\infty\frac{1}{(x+k)^2} - \frac{1}{(x+k+t)^2} - \frac{t}{x(x+t)} \right)- (s-t)^2,
	\end{align}
	and for  $s>t$ further 
	\begin{align} 
	h_s'(x) 
	&= \left(2(x+s)(x+t)+ (x+s)^2\right)\left(\sum_{k=0}^\infty\frac{1}{(x+k)^2} - \frac{1}{(x+k+t)^2} - \frac{t}{x(x+t)} \right) \\
	&\quad + (x+s)^2(x+t)\left(\sum_{k=0}^\infty\frac{-2}{(x+k)^3} + \frac{2}{(x+k+t)^3} + \frac{t(2x+t)}{x^2(x+t)^2} \right)\\
	&\leq 3(x+s)^2 \left(\sum_{k=0}^\infty\frac{1}{(x+k)^2} - \frac{1}{(x+k+t)^2} - \frac{t}{x(x+t)} \right)\\
	&\quad + (x+s)^2(x+t)\left(\sum_{k=0}^\infty\frac{-2}{(x+k)^3} + \frac{2}{(x+k+t)^3} + \frac{t(2x+t)}{x^2(x+t)^2} \right). \\
	&= (x+s)^2 (R(x)-I(x)),
	\end{align}
	where $I(x)$ is the integral and $R(x)$ the corresponding rectangular rule 
	with step size 1 of the function  $g\coloneqq g_1 + g_2$ defined as
	\begin{equation}
	g_1(u)\coloneqq 3\left(  \frac{1}{(x+u)^2}  - \frac{1}{(x+ t + u)^2}\right),
	\quad 
	g_2(u)\coloneqq(x+t)\left( \frac{-2}{(x+u)^3} + \frac{2}{(x+t+ u)^3}\right).
	\end{equation}
	We show that $R(x)-I(x) <0$ for all $x>0$. 
	Let $T(x)$, $T_i(x)$ be the trapezoidal rules with step size 1 corresponding to $I(x)$ and 
	$I_i(x)=\int_{0}^{\infty} g_i(u)du$, $i=1,2$. 
	Then it follows
	$$
	R(x)- I(x) = R(x) - T(x) + T(x) - I(x) 
	=R(x) - T(x) + T_1(x) - I_1(x)  + T_2(x) - I_2(x). 
	$$
	Since $g_2$ is a decreasing, concave function,  we conclude $T_2(x) - I_2(x)<0$. 
	Using Euler's summation formula in \eqref{**} for $g_1$, we get
	\begin{align}	
	T_1(x) - I_1(x) &= -\frac{1}{12}g_1'(0) - \frac{1}{720}\sum_{k=0}^{\infty} g_1^{(4)}(\xi_k), \quad \xi_k\in(k,k+1).
	\end{align} 
	Since $g_1^{(4)}$ is a positive function, we can write
	\begin{align}
	R(x) - I(x) &< R(x) - T(x) + T_1(x) - I_1(x) \leq \frac{1}{2} g(0)  -\frac{1}{12}g_1'(0)\\
	&= \frac{3}{2}\left( \frac{1}{x^2}-\frac{1}{(x+t)^2}\right) +
	\frac{1}{2}(x+t) \left( \frac{-2}{x^3} + \frac{2}{(x+t)^3}\right)  -
	\frac{1}{2}\left( \frac{-1}{x^3} + \frac{1}{(x+t)^3}\right)\\
	&=\frac{t}{2} \, \frac{(- 3 t + 3 )x^2  +(- 5 t^2 + 3t)x -2 t^3  +t^2}{x^3(x+t)^3}.
	\end{align}
	All coefficients of $x$ are smaller or equal than zero for $t \ge 1$ which implies that $h_s$
	is strictly decreasing.
\end{proof}	

Theorem \ref{thm:ensure} implies the following corollary.
	
\begin{Corollary}\label{cor:ensure}
For $F: \R_{>0} \rightarrow \R$ given by \eqref{function_F} and 
$\delta_i \coloneqq (x_i - \mu)^\tT \Sigma^{-1} (x_i - \mu)$, $i=1,\ldots,n$, the following relations hold true:
\begin{itemize}
\item[i)]
If $\delta_i \in [d - \sqrt{2d},d+ \sqrt{2d}] \cap \mathbb R_{>0}$ for all $i\in \{1,\ldots,n\}$, then $F(x) < 0$ 
for all $x >0$ so that $F$ has no zero.
\item[ii)]
If $\delta_i > 0$ and $\delta_i \not \in [d - \sqrt{2d},d+ \sqrt{2d}]$ for all $i\in \{1,\ldots,n\}$,
there exists 
$x_+$ such that $F(x) >0$ for all $x \ge x_+$. 
In particular, $F$ has a zero.
\end{itemize}
\end{Corollary}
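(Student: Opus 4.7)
The statement is an essentially immediate consequence of Theorem \ref{thm:ensure}, exploiting the decomposition $F(x) = \sum_{i=1}^n w_i F_{s_i}(x)$ from \eqref{function_F}. The plan is to reduce each assertion to the corresponding part of Theorem \ref{thm:ensure} applied componentwise to the summands $F_{s_i}$, after translating the interval $[d-\sqrt{2d},d+\sqrt{2d}]$ for $\delta_i$ into the interval $[t-\sqrt{t},t+\sqrt{t}]$ for $s_i$ via the substitution $s_i = \tfrac{1}{2}\delta_i$ and $t = \tfrac{d}{2}$.

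First I would verify this substitution: since $s_i = \delta_i/2$ and $t = d/2$, the condition $s_i \in [t - \sqrt{t}, t + \sqrt{t}]$ is equivalent to $\delta_i/2 \in [d/2 - \sqrt{d/2}, d/2 + \sqrt{d/2}]$, i.e.\ $\delta_i \in [d - \sqrt{2d}, d + \sqrt{2d}]$. Likewise, $s_i > 0$ if and only if $\delta_i > 0$.

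For part (i), under the hypothesis each $s_i$ lies in $[t - \sqrt{t}, t + \sqrt{t}] \cap \R_{>0}$, so Theorem \ref{thm:ensure}(i) gives $F_{s_i}(x) < 0$ for all $x > 0$ and every $i$. Since the weights $w_i$ are strictly positive and sum to $1$, the convex combination $F(x) = \sum_{i=1}^n w_i F_{s_i}(x)$ is also strictly negative for every $x > 0$, so $F$ has no zero.

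For part (ii), each $s_i > 0$ with $s_i \notin [t - \sqrt{t}, t + \sqrt{t}]$, so Theorem \ref{thm:ensure}(ii) yields for each $i$ a threshold $x_{+,i}$ such that $F_{s_i}(x) > 0$ for all $x \geq x_{+,i}$. Setting $x_+ \coloneqq \max_{i=1,\dots,n} x_{+,i}$ gives $F(x) = \sum_{i=1}^n w_i F_{s_i}(x) > 0$ for all $x \geq x_+$. For the existence of a zero I would use \eqref{eq1} applied to each $F_{s_i}$, which gives $\lim_{x \to 0^+} F_{s_i}(x) = -\infty$ for every $i$, hence $\lim_{x \to 0^+} F(x) = -\infty$. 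Combined with continuity of $F$ on $\R_{>0}$ and the positivity of $F(x_+)$, the intermediate value theorem produces a zero of $F$ in $(0, x_+)$. There is no real obstacle here; the only point requiring minor care is the translation between the $\delta_i$-interval and the $s_i$-interval and the observation that a positive convex combination preserves strict sign.
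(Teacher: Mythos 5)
Your proposal is correct and follows essentially the same route as the paper's proof: apply Theorem \ref{thm:ensure} termwise to the summands $F_{s_i}$ in the decomposition $F = \sum_{i=1}^n w_i F_{s_i}$, take the maximum of the thresholds $x_{+,i}$ for part (ii), and invoke $\lim_{x\to 0^+}F(x)=-\infty$ together with continuity to obtain a zero. Your explicit verification that $s_i=\delta_i/2$, $t=d/2$ maps $[d-\sqrt{2d},d+\sqrt{2d}]$ onto $[t-\sqrt{t},t+\sqrt{t}]$ is a small but welcome addition that the paper leaves implicit.
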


\begin{proof}
 Consider $F = \sum_{i=1}^n F_{s_i}$. 
	If $\delta_i \in [d - \sqrt{2d},d+ \sqrt{2d}] \cap \mathbb R_{>0}$ for all $i\in \{1,\ldots,n\}$,
	then we have by Theorem \ref{thm:ensure} that $F_{s_i} (x) < 0$ for all $x>0$.
	Clearly, the same holds true for the whole function $F$ such that it cannot have a zero.
	
	If $\delta_i \not \in [d - \sqrt{2d},d+ \sqrt{2d}]$ for all $i\in \{1,\ldots,n\}$, then we know
	by Theorem \ref{thm:ensure} that there exist $x_{i+} > 0$ such that $F_{s_i} (x) > 0$ for 
	$x \ge x_{i+}$. Thus, $F(x) > 0$ for $x \ge x_+ \coloneqq \max_i(x_{i+})$. 
	Since $\lim_{x \rightarrow 0} F(x) = -\infty$
	this implies that $F$ has a zero.	
\end{proof}

\section{Algorithms} \label{sec:algs}
In this section, we propose an alternative of the classical EM algorithm 
for computing the parameters of the Student-$t$ distribution along with convergence results.
In particular, we are interested in estimating the degree of freedom parameter $\nu$,
where the function $F$ is of particular interest. 
\\

\textbf{Algorithm \ref{alg:EM}} with weights $w_i = \frac{1}{n}$, $i=1,\ldots,n$, 
is the classical EM algorithm.
Note that the function in the third M-Step
\begin{align} \label{poly_EM}
\Phi_r \left( \frac{\nu}{2} \right)  
&\coloneqq
\phi \left( \frac{\nu}{2} \right) 
\underbrace{ - \, \phi \left( \frac{\nu_r + d}{2} \right)		          
						 + \sum_{i=1}^n w_i \left( \gamma_{i,r} - \log( \gamma_{i,r} ) - 1 \right)}_{c_r}
\end{align}
has a unique zero since by \eqref{asymp_phi} the function $\phi < 0$ 
is monotone increasing with $\lim_{x \rightarrow \infty} \phi(x) = 0^-$ and $c_r > 0$.
Concerning the convergence of the EM algorithm it is known 
that the  values of the objective function $L(\nu_r,\mu_r,\Sigma_r)$ are monotone decreasing in $r$ and that
a subsequence of the iterates
converges to a critical point of $L(\nu,\mu,\Sigma)$ if such a point exists, see \cite{Byrne2017}.
\\

\begin{algorithm}[!ht]
	\caption{EM Algorithm (EM)} \label{alg:EM}
	\begin{algorithmic}
		\State \textbf{Input:} $x_1,\ldots,x_n\in \R^d$, $n \geq d+1$, $w \in \mathring \Delta_n$ 
		\State \textbf{Initialization:} 
		$\nu_0 = \eps>0$,  $\mu_0 =\frac{1}{n} \sum\limits_{i=1}^n x_i$, 
		$\Sigma_0 =\frac{1}{n}\sum\limits_{i=1}^n (x_i-\mu_0)(x_i-\mu_0)^\tT$
		\For{$r=0,\ldots$}
		\vspace{0.2cm}
		
		 \textbf{E-Step:} Compute the weights
		\begin{align*} 	
		\delta_{i,r} &=  (x_i-\mu_r)^\tT \Sigma_r^{-1} (x_i-\mu_r)\\
		\gamma_{i,r} &=    \frac{\nu_r + d}{ \nu_r + \delta_{i,r} }
		\end{align*}
		
		\hspace*{0.2cm} \textbf{M-Step:} Update the parameters
		\begin{align*}
		\mu_{r+1}    
		&=   
		\frac{ \sum\limits_{i=1}^{n} w_i \gamma_{i,r} x_i}{ \sum\limits_{i=1}^{n} w_i\gamma_{i,r} } 
		\\
		\Sigma_{r+1} 
		&=   
		\sum\limits_{i=1}^{n} w_i \gamma_{i,r} (x_i-\mu_{r+1})(x_i-\mu_{r+1})^\tT 
		\\
		\nu_{r+1}& \; = \;
		\text{ zero of } \;
		 \phi\left(\frac{\nu}{2}\right) -\phi\left( \frac{\nu_r + d}{2}\right)
		+ 
				\sum_{i=1}^n w_i\left( \gamma_{i,r} - \log(\gamma_{i,r} ) - 1 \right)
		\end{align*}
		\EndFor
	\end{algorithmic}
\end{algorithm}

\textbf{Algorithm \ref{alg:aEM}} distinguishes from the EM algorithm in the iteration of $\Sigma$, where the
factor $\frac{1}{\sum\limits_{i=1}^n w_i \gamma_{i,r}}$ is incorporated now.
The computation of this factor requires no additional computational effort,
but speeds up the performance in particular for smaller $\nu$.
Such kind of acceleration was suggested in \cite{KTV94,MVD97}.
\emph{For fixed $\nu \ge 1$}, it was shown in \cite{vanDyk1995} that this algorithm is indeed an EM algorithm
arising from another choice of the hidden variable than used in the standard approach, see also \cite{Laus2019}. 
Thus, it follows for fixed $\nu \ge 1$ that the sequence $L(\nu ,\mu_r,\Sigma_r)$ is monotone decreasing.
However, we also iterate over $\nu$. In contrast to the EM Algorithm \ref{alg:EM} 
our $\nu$ iteration step depends on $\mu_{r+1}$ and $\Sigma_{r+1}$
instead of $\mu_{r}$ and $\Sigma_{r}$. This is important for our convergence results.
Note that for both cases, the accelerated algorithm can  no longer be interpreted as an EM algorithm, so that
the convergence results of the classical EM approach are no longer available.

Let us mention that a Jacobi variant of Algorithm \ref{alg:aEM} for \emph{fixed} $\nu$ i.e.
$$
		\Sigma_{r+1} 
		=   
		\sum\limits_{i=1}^{n} \frac{w_i\gamma_{i,r} (x_i-\mu_{r})(x_i-\mu_{r})^\tT }{\sum_{i=1}^n w_i \gamma_{i,r}},
		$$
		with $\mu_r$ instead of $\mu_{r+1}$ including a convergence proof was suggested in \cite{LS2019}.
		The main reason for this index choice was that we were able to prove monotone convergence of a simplified version
of the algorithm for estimating the location and scale of Cauchy noise ($d=1$, $\nu = 1$) 
which could be not achieved with the variant incorporating $\mu_{r+1}$, see \cite{LPS18}.
This simplified version is known as myriad filter in image processing. 
In this paper, we keep the original variant from the EM algorithm \eqref{eq:aem} since we are mainly interested in the
computation of $\nu$.

Instead of the above algorithms we suggest to take the critical point equation \eqref{ML_nu} more
directly into account in the next two algorithms.
\\

\begin{algorithm}[!ht]
	\caption{Accelerated EM-like Algorithm (aEM)} \label{alg:aEM}
	\begin{algorithmic}
	\State	Same as Algorithm \ref{alg:EM} except for 
	 \begin{align} 
		\Sigma_{r+1} 
		&=   
		\sum\limits_{i=1}^{n} \frac{w_i\gamma_{i,r} (x_i-\mu_{r+1})(x_i-\mu_{r+1})^\tT }{\sum_{i=1}^n w_i \gamma_{i,r}}\label{eq:aem}\\
        \nu_{r+1}& \; = \;
		\text{ zero of } \;
		 \phi\left(\frac{\nu}{2}\right) -\phi\left( \frac{\nu_r + d}{2}\right)
		+ 
				\sum_{i=1}^n w_i\left( \frac{\nu_r+d}{\nu_r+\delta_{i,r+1}} - \log\left(\frac{\nu_r+d}{\nu_r+\delta_{i,r+1}} \right) - 1 \right)
		\end{align}
	\end{algorithmic}
\end{algorithm}

\textbf{Algorithm \ref{alg:MMF}} computes a zero of 
\begin{equation} \label{eq:alternative}
\Psi_r \left( \frac{\nu}{2} \right)  
\coloneqq
\phi \left( \frac{\nu}{2} \right) - \phi \left( \frac{\nu + d}{2} \right)
		+ \underbrace{ \sum_{i=1}^n w_i\left( \frac{\nu_r+d}{\nu_r+\delta_{i,r+1}} - \log\left(\frac{\nu_r+d}{\nu_r+\delta_{i,r+1}} \right) - 1 \right) }_{b_r}
\end{equation}
This function has a unique zero since by \eqref{asymp_A} the function $A(x) = \phi(x) -\phi(x+t) < 0$ is monotone increasing with 
$\lim_{x \rightarrow \infty} A(x)= 0_-$ and $b_r > 0$.
\\

\begin{algorithm}[!ht]
	\caption{Multivariate Myriad Filter (MMF)} \label{alg:MMF}
	\begin{algorithmic}
	\State	Same as Algorithm \ref{alg:aEM} except for
	\begin{equation} \label{eq:one_step}
        \nu_{r+1} \; = \;
		\text{ zero of } \;
		 \phi\left(\frac{\nu}{2}\right) -\phi\left( \frac{\nu + d}{2}\right)
		+ 
				\sum_{i=1}^n w_i\left( \frac{\nu_r+d}{\nu_r+\delta_{i,r+1}} - \log\left(\frac{\nu_r+d}{\nu_r+\delta_{i,r+1}} \right) - 1 \right)
\end{equation}
	\end{algorithmic}
\end{algorithm}

Finally, \textbf{Algorithm \ref{alg:GMMF}} computes the update of $\nu$ 
by directly finding a zero of the whole function $F$ in \eqref{ML_nu} given $\mu_r$ and $\Sigma_r$.
The existence of such a zero was discussed in the previous section.
The zero computation is done by an inner loop which iterates the update step of $\nu$ from Algorithm \ref{alg:MMF}.
We will see that the iteration converge indeed to a zero of $F$.
\\

\begin{algorithm}[!ht]
	\caption{General Multivariate Myriad Filter (GMMF)} \label{alg:GMMF}
	\begin{algorithmic}
		\State	Same as Algorithm \ref{alg:aEM} except for 
		\begin{align}
		\nu_{r+1}&=  \; \text{ zero of } 
		 \phi\left( \frac{\nu}{2} \right) 
			-\phi\left( \frac{\nu +d}{2} \right)
			+ \sum_{i=1}^n w_i \left( \frac{\nu + d}{\nu +  \delta_{i,r+1}}  - \log\left( \frac{\nu + d}{\nu +  \delta_{i,r+1}} \right) - 1 \right)	 
			\end{align}
		  \For{$l=0,\ldots$}
		\begin{align}
		&\nu_{r,0} = \nu_r\\
				&\nu_{r,l+1}  \; \text{ zero of } \; \phi\left( \frac{\nu}{2} \right) 
			-\phi\left( \frac{\nu +d}{2} \right)
			+ \sum_{i=1}^n w_i \left( \frac{\nu_{r,l} + d}{\nu_{r,l} +  \delta_{i,r+1}}  - 
			\log\left( \frac{\nu_{r,l} + d}{\nu_{r,l} +  \delta_{i,r+1}} \right) - 1 \right)	 
		\end{align}
		  \EndFor
	\end{algorithmic}
\end{algorithm}

In the rest of this section, we prove that the sequence $(L(\nu_r,\mu,r,\Sigma_r))_r$ generated by Algorithm \ref{alg:aEM} and \ref{alg:MMF} decreases in each iteration step
and that there exists  a subsequence of the iterates which 
converges to a critical point.

We will need the following auxiliary lemma.

\begin{Lemma}\label{prop:inner_loop}
Let $F_a,F_b\colon \R_{>0}\to\R$ be continuous functions, 
where $F_a$ is strictly increasing and $F_b$ is strictly decreasing. 
Define $F\coloneqq F_a + F_b$. 
For any initial value $x_0 >0$ assume that the sequence generated by
$$
x_{l+1} = \text{ zero of } F_a(x)+F_b(x_l)
$$
is uniquely determined, i.e., the functions on the right-hand side have a unique zero.
Then it holds
\begin{enumerate}
\item[i)] If $F(x_0)<0$, then $(x_l)_l$ is strictly increasing and $F(x) < 0$ for all $x \in [x_l,x_{l+1}]$, $l \in \mathbb N_0$.
\item[ii)] If $F(x_0)>0$, then $(x_l)_l$ is strictly decreasing and $F(x) >0$ for all $x \in [x_{l+1},x_{l}]$, $l \in \mathbb N_0$.
\end{enumerate}
Furthormore, assume that there exists 
$x_->0$ with $F(x) <0$ for all $x<x_-$ 
and 
$x_+>0$ with $F(x)>0$ for all $x>x_+$. 
Then, the sequence $(x_l)_l$ converges to a zero $x^*$ of $F$.
\end{Lemma}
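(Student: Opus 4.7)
The plan is to exploit the implicit relation \(F_a(x_{l+1}) = -F_b(x_l)\) built into the iteration, together with the strict monotonicity of \(F_a\) and \(F_b\), to convert the sign of \(F(x_l)\) into the direction of the next step. Concretely, \(F(x_l)<0\) means \(F_a(x_l)<-F_b(x_l)=F_a(x_{l+1})\), so by strict monotonicity of \(F_a\) we conclude \(x_l<x_{l+1}\); and \(F(x_l)>0\) yields \(x_l>x_{l+1}\) by the same reasoning. This single observation drives both items of the lemma.

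For item (i), I would proceed by induction. Assume \(F(x_l)<0\); by the above, \(x_l<x_{l+1}\). To close the induction I compute
\begin{equation*}
F(x_{l+1}) = F_a(x_{l+1})+F_b(x_{l+1}) = -F_b(x_l)+F_b(x_{l+1}),
\end{equation*}
which is strictly negative because \(F_b\) is strictly decreasing and \(x_l<x_{l+1}\). For the interval claim, given \(x\in[x_l,x_{l+1}]\), I would bound \(F_a(x)\le F_a(x_{l+1})=-F_b(x_l)\le -F_b(x)\), where the second inequality uses \(x\ge x_l\) and \(F_b\) decreasing. Summing gives \(F(x)\le 0\); the endpoint values \(F(x_l),F(x_{l+1})\) are strictly negative from the induction, and for \(x\) in the open interval at least one of the two inequalities is strict, so \(F(x)<0\) throughout. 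Item (ii) follows by flipping every inequality, with the roles of increasing/decreasing reversed.

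For the convergence part, assume the additional sign structure with \(x_-,x_+\). In case (i), \(F(x_l)<0\) for all \(l\) together with \(F>0\) on \((x_+,\infty)\) forces \(x_l\le x_+\); combined with monotonicity this gives convergence of \((x_l)_l\) to some \(x^*\in(0,x_+]\). In case (ii), \((x_l)_l\) is decreasing and bounded below by \(x_-\), so again converges. In either case, passing to the limit in \(F_a(x_{l+1})+F_b(x_l)=0\) via continuity of \(F_a\) and \(F_b\) yields \(F(x^*)=F_a(x^*)+F_b(x^*)=0\).

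The main obstacle I anticipate is the strictness of \(F(x)<0\) (respectively \(F(x)>0\)) on the full closed interval, since the direct estimate only gives a non-strict inequality; one must patch the endpoints using the inductive hypothesis and then rely on the strict monotonicity to preserve strictness in the interior. Everything else is straightforward, and the well-definedness of \((x_l)_l\) is already assumed in the hypothesis, so no separate existence argument is needed for the iterates.
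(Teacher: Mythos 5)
Your proposal is correct and follows essentially the same route as the paper's proof: the same inductive argument converting the sign of $F(x_l)$ into the step direction via strict monotonicity of $F_a$, the same computation $F(x_{l+1}) = F_a(x_{l+1}) + F_b(x_{l+1}) < F_a(x_{l+1}) + F_b(x_l) = 0$ for the induction step, the same sandwich for the interval claim, and the same boundedness-plus-continuity argument for convergence. No gaps.
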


\begin{proof}
We consider the case i) that $F(x_0)<0$. Case ii) follows in a similar way.

We show by induction that $F(x_l)<0$ and that $x_{l+1} > x_l$ for all $l \in \mathbb N$.
Then it holds for all $l\in\mathbb N$ and $x \in(x_l,x_{l+1})$ that 
$F_a(x) + F_b(x) < F_a(x) + F_b(x_l) < F_a(x_{l+1} ) + F_b(x_l) = 0$. 
Thus $F(x) < 0$ for all $x \in [x_l,x_{l+1}]$, $l \in \mathbb N_0$.
\\[1ex]
\textbf{Induction step.} Let $F_a(x_l)+F_b(x_l)<0$. 
Since $F_a(x_{l+1})+F_b(x_l) =0 > F_a(x_l)+F_b(x_l)$ 
and $F_a$ is strictly increasing, 
we have $x_{l+1}>x_l$. 
Using that $F_b$ is strictly decreasing, we get  $F_b(x_{l+1})<F_b(x_l)$
and consequently
$$
F(x_{l+1}) = F_a(x_{l+1}) + F_b(x_{l+1}) < F_a(x_{l+1}) + F_b(x_l)=0.
$$

Assume now that $F(x)>0$ for all $x>x_+$.
Since the sequence $(x_l)_l$ is strictly increasing and $F(x_l) < 0$
it must be bounded from above by $x_+$. 
Therefore it converges to some $x^*\in \R_{>0}$. 
Now, it holds by the continuity of $F_a$ and $F_b$ that
$$
0 =\lim\limits_{l\to\infty} F_a(x_{l+1}) + F_b(x_l) = F_a(x^*) + F_b(x^*) = F(x^*).
$$
Hence $x^*$ is a zero of $F$.
\end{proof}

For the setting in Algorithm \ref{alg:GMMF}, Lemma \ref{prop:inner_loop} implies the following corollary.

\begin{Corollary}
Let $F_a (\nu) \coloneqq  \phi \left( \frac{\nu}{2} \right) - \phi\left(\frac{\nu+d}{2} \right)$ 
and 

$F_b (\nu) \coloneqq \sum_{i=1}^n w_i \left( \frac{\nu + d}{\nu +  \delta_{i,r+1}}  - 
			\log\left( \frac{\nu + d}{\nu +  \delta_{i,r+1}} \right) - 1 \right)$, $r \in \N_0$
Assume that there exists $\nu_+ > 0$ such that $F \coloneqq F_a + F_b >0$ for all $\nu \ge \nu_+$.
Then, the sequence $(\nu_{r,l})_l$ generated by the $r$-th inner loop of Algorithm \ref{alg:GMMF}
converges to a zero of $F$.
\end{Corollary}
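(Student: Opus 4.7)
The plan is to verify the hypotheses of Lemma \ref{prop:inner_loop} for the functions $F_a$ and $F_b$ defined in the corollary, and then apply it directly. Using the change of variables $x = \nu/2$, $t = d/2$, and $s_i = \delta_{i,r+1}/2$ from Section \ref{sec:zero_F}, we can identify $F_a(\nu) = A(\nu/2)$ and $F_b(\nu) = \sum_{i=1}^n w_i B_{s_i}(\nu/2)$. Strict monotonicity is then immediate from the complete monotonicity properties: \eqref{cm_A} gives $A' > 0$, so $F_a$ is strictly increasing, while \eqref{cm_B} gives $B_{s_i}' < 0$, so $F_b$ is strictly decreasing, provided we are in the non-degenerate regime where $s_i \neq t$ for at least one $i$ (otherwise $F_b \equiv 0$ and $F = F_a < 0$ on all of $\R_{>0}$, contradicting the existence of $\nu_+$).

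Next I would check that each update step $\nu_{r,l+1} = \text{zero of } F_a(\nu) + F_b(\nu_{r,l})$ is uniquely determined. By \eqref{asymp_A}, $F_a$ increases strictly monotonically from $-\infty$ at $0^+$ to $0^-$ at $\infty$, and by the complete monotonicity of each $B_{s_i}$ together with $\lim_{x\to\infty} B_{s_i}(x) = 0^+$ from \eqref{asymp_B}, the value $F_b(\nu_{r,l})$ is a strictly positive constant. Hence $F_a + F_b(\nu_{r,l})$ crosses zero exactly once, and the sequence $(\nu_{r,l})_l$ is well-defined.

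Finally, I would establish the two-sided sign conditions required by Lemma \ref{prop:inner_loop}. The corollary's hypothesis already provides $\nu_+ > 0$ with $F(\nu) > 0$ on $[\nu_+, \infty)$. For a matching lower bound: since $F_a(\nu) \to -\infty$ as $\nu \to 0^+$ while $F_b(\nu) \to \sum_{i=1}^n w_i B_{s_i}(0)$ remains finite by \eqref{asymp_B}, there exists $\nu_- > 0$ with $F(\nu) < 0$ on $(0, \nu_-]$. With all hypotheses verified, Lemma \ref{prop:inner_loop} yields convergence of $(\nu_{r,l})_l$ to a zero of $F$, distinguishing the three cases $F(\nu_r) < 0$ (sequence strictly increasing, bounded above by $\nu_+$), $F(\nu_r) > 0$ (strictly decreasing, bounded below by $\nu_-$), and $F(\nu_r) = 0$ (constant). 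The only subtle point is excluding the degenerate case in which every $\delta_{i,r+1}$ equals $d$, but this is incompatible with the assumed existence of $\nu_+$ and therefore cannot occur.
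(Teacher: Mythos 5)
Your proof is correct and follows essentially the same route as the paper's: both reduce the statement to Lemma \ref{prop:inner_loop}, using the monotonicity of $F_a$ and $F_b$ established in Section \ref{sec:zero_F} and the fact that $\lim_{\nu\to 0}F(\nu)=-\infty$ supplies the missing sign condition $x_-$ in the decreasing case. Your additional checks (unique solvability of each inner step and exclusion of the degenerate case $\delta_{i,r+1}=d$ for all $i$ via the hypothesis on $\nu_+$) are correct refinements that the paper leaves implicit.
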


Note that by Corollary \ref{cor:ensure} the above condition on $F$ is fulfilled in each iteration step, e.g. 
if $\delta_{i,r} \not \in [d - \sqrt{2d} , d + \sqrt{2d}]$ for $i=1,\ldots,n$ and $r \in\N_0$.

\begin{proof}
From the previous section we know that $F_a$ is strictly increasing and $F_b$ is strictly decreasing. 
Both functions are continuous.
If $F(\nu_r) < 0$, then we know from Lemma \ref{prop:inner_loop} that $(\nu_{r,l})_l$ is increasing and converges to a zero $\nu_r^*$
of $F$.

If $F(\nu_r) > 0$, then we know from Lemma \ref{prop:inner_loop} that $(\nu_{r,l})_l$ is decreasing.
The condition that there exists 
$x_-\in\R_{>0}$ with $F(x) <0$ for all $x<x_-$ is fulfilled since $\lim_{x \rightarrow 0} F(x) = -\infty$.
Hence, by Lemma \ref{prop:inner_loop}, the sequence converges to a zero $\nu_r^*$
of $F$.
\end{proof}

To prove that the objective function decreases in each step of the Algorithms \ref{alg:aEM} - \ref{alg:GMMF} we need the following lemma.

\begin{Lemma}\label{lem:function_decreasing}
Let $F_a,F_b\colon \R_{>0}\to\R$ be continuous functions, 
where $F_a$ is strictly increasing and $F_b$ is strictly decreasing. 
Define $F\coloneqq F_a + F_b$ and let $G\colon \R_{>0}\to \R$ be an antiderivative of $F$, i.e. 
$F= \frac{\mathrm{d}}{\mathrm{d}x} G$. 
For an arbitrary $x_0 >0$, let $(x_{l})_l$ be the sequence generated by
$$
x_{l+1} = \text{ zero of } F_a(x) + F_b(x_l).
$$
Then the following holds true:
\begin{enumerate}
\item[i)] The sequence $(G(x_{l}))_l$ is monotone decreasing with $G(x_l)=G(x_{l+1})$ 
if and only if $x_0$ is a critical point of $G$. 
If $(x_{l})_l$ converges, then the limit $x^*$ fulfills 
$$
G(x_0) \geq G(x_1) \geq G(x^*),
$$
with equality if and only if $x_0$ is a critical point of $G$. 
\item[ii)] Let $F = \tilde F_a + \tilde F_b$ be another splitting of $F$ 
with continuous functions $\tilde F_a, \tilde F_b$, where the first one is strictly increasing and the second one strictly decreasing.
Assume that $\tilde F_a'(x) > F_a'(x)$ for all $x>0$. 
Then holds for $y_1 \coloneqq \text{ zero of } \tilde F_a(x) + \tilde F_b(x_0)$ 
that $G(x_0) \geq G(y_1) \geq G(x_1)$ 
with equality if and only if $x_0$ is a critical point of $G$.
\end{enumerate}
\end{Lemma}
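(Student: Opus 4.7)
The plan is to reduce both claims to the fundamental theorem of calculus applied to $G$ on intervals whose endpoints are located by Lemma \ref{prop:inner_loop}.

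For part (i), I would split according to the sign of $F(x_0)$. If $F(x_0)=0$, then $x_0$ itself satisfies $F_a(x_0)+F_b(x_0)=0$; since $F_a(\cdot)+F_b(x_0)$ is strictly increasing, its zero is unique, so $x_1=x_0$, and inductively $x_l=x_0$ for all $l$. In this case $G'(x_0)=F(x_0)=0$, so $x_0$ is critical and trivially $G(x_l)=G(x_0)$ for all $l$. If $F(x_0)<0$, Lemma \ref{prop:inner_loop} gives a strictly increasing sequence with $F<0$ on each $[x_l,x_{l+1}]$, so
\[
G(x_{l+1})-G(x_l)=\int_{x_l}^{x_{l+1}} F(t)\,dt<0.
\]
The case $F(x_0)>0$ is symmetric. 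For the convergence statement, the monotone sequence $(G(x_l))_l$ together with continuity of $G$ yields $G(x_0)\ge G(x_1)\ge G(x^*)$; a strict inequality forces $F(x_0)\ne 0$, while $F(x_0)=0$ gives $x_l\equiv x_0$, so all three values coincide. The equivalence ``$G(x_l)=G(x_{l+1})$ exactly when $x_0$ is critical'' follows from the same dichotomy. If the equality occurs at some $l>0$, one propagates it backwards: the defining relation $F_a(x_l)=-F_b(x_{l-1})$ combined with $F(x_l)=0$ gives $F_b(x_l)=F_b(x_{l-1})$, and strict monotonicity of $F_b$ forces $x_{l-1}=x_l$, iterating down to $x_0$.

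For part (ii), I would compare $y_1$ and $x_1$ via the auxiliary functions $\phi(x):=F_a(x)+F_b(x_0)$ and $\tilde\phi(x):=\tilde F_a(x)+\tilde F_b(x_0)$. Both are strictly increasing, and $\phi(x_0)=\tilde\phi(x_0)=F(x_0)$. The difference $\tilde\phi-\phi$ has derivative $\tilde F_a'-F_a'>0$, so it is strictly increasing and vanishes at $x_0$; hence $\tilde\phi<\phi$ on $(0,x_0)$ and $\tilde\phi>\phi$ on $(x_0,\infty)$. In the nontrivial case $F(x_0)<0$, both iterates lie to the right of $x_0$; from $0=\tilde\phi(y_1)>\phi(y_1)$ and $\phi(x_1)=0$, monotonicity of $\phi$ gives $y_1<x_1$. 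Since by Lemma \ref{prop:inner_loop} we have $F<0$ on $[x_0,x_1]$, integration yields
\[
G(y_1)-G(x_0)=\int_{x_0}^{y_1} F(t)\,dt<0, \qquad G(x_1)-G(y_1)=\int_{y_1}^{x_1} F(t)\,dt<0,
\]
so $G(x_0)>G(y_1)>G(x_1)$. The case $F(x_0)>0$ is the mirror image (now $x_1<y_1<x_0$ and $F>0$ on $[x_1,x_0]$), and $F(x_0)=0$ yields $x_1=y_1=x_0$, which is precisely the equality case.

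The main obstacle I expect is bookkeeping: pinning down, in each sign case, the correct relative ordering of $x_0$, $y_1$, and $x_1$ on the real line, so that the sign information on $F$ supplied by Lemma \ref{prop:inner_loop} can be integrated on the right interval. Once these orderings are secured, both parts amount to a direct application of the fundamental theorem of calculus together with the strict monotonicity of the pieces $F_a,\tilde F_a$ and $F_b,\tilde F_b$.
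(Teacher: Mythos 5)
Your proof is correct and follows essentially the same route as the paper: both parts reduce to the fundamental theorem of calculus on intervals whose ordering and the sign of $F$ on them are supplied by Lemma \ref{prop:inner_loop}. The only (minor) deviation is in part (ii), where you locate $y_1$ relative to $x_1$ by comparing the two strictly increasing update functions pointwise to the right of $x_0$, whereas the paper equates $\int_{x_0}^{x_1}F_a'$ with $\int_{x_0}^{y_1}\tilde F_a'$ and uses $\tilde F_a'>F_a'$ to compare interval lengths; both arguments rest on the same hypothesis and yield the same conclusion.
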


\begin{proof}
i) If $F(x_0)=0$, then $x_0$ is a critical point of $G$.

Let $F(x_0)<0$. By Lemma \ref{prop:inner_loop} we know that $(x_{l})_l$ is strictly increasing
and that $F(x) < 0$ for $x \in [x_r,x_{r+1}]$, $r \in \N_0$.
By the Fundamental Theorem of calculus it holds
$$
G(x_{l+1})=G(x_{l})+\int_{x_{l}}^{x_{l+1}} F(\nu) d\nu.
$$
Thus, $G(x_{l+1})<G(x_{l})$. 

Let $F(x_0)>0$. By Lemma \ref{prop:inner_loop} we know that $(x_{l})_l$ is strictly decreasing
and that $F(x) > 0$ for $x \in [x_{r+1},x_{r}]$, $r \in \N_0$.
Then $$
G(x_{l}) = G(x_{l+1})+\int_{x_{l+1}}^{x_{l}} F(\nu) d\nu.
$$
implies $G(x_{l+1})<G(x_{l})$.
Now, the rest of assertion i) follows immediately.
\\[1ex]
ii) It remains to show that $G(x_1)\leq G(y_1)$. Let $F(x_0) <0$. 
Then we have $y_1\geq x_0$ and   $x_1\geq x_0$. 
By the Fundamental Theorem of calculus we obtain
\begin{align}
F(x_0) + \int_{x_0}^{x_1} F_a'(x)dx      &= F_a(x_0)+\int_{x_0}^{x_1} F_a'(x) dx + F_b (x_0) = F_a (x_1) + F_b (x_0)=0,\\
F(x_0) + \int_{x_0}^{y_1} \tilde F_a'(x)dx&=\tilde F_a(x_0)+\int_{x_0}^{y_1}\tilde F_a'(x) dx+\tilde F_b(x_0) =\tilde F_a(y_1)+\tilde F_b(x_0)=0.
\end{align}
This yields
\begin{equation}\label{eq:int_equal}
\int_{x_0}^{x_1} F_a'(x) dx=\int_{x_0}^{y_1}\tilde F_a'(x)dx,
\end{equation}
and since $\tilde F'_a(x) > F'_a(x)$ further  $y_1\leq x_1$ with equality if and only if $x_0=x_1$, 
i.e., if $x_0$ is a critical point of $G$.
Since $F(x)<0$ on $(x_0,x_1)$ it holds
$$
G(x_1)=G(y_1)+\int_{y_1}^{x_1}F(x) dx \leq G(y_1),
$$
with equality if and only if $x_0=x_1$.
The case $F(x_0) >0$ can be handled similarly.
\end{proof}

Lemma \ref{lem:function_decreasing} implies the following relation between the values of the objective function 
$L$ for Algorithms \ref{alg:aEM} - \ref{alg:GMMF}.

\begin{Corollary}\label{lem:likelihood_decreasing_nu}
For the same fixed $\nu_r>0, \mu_r\in\R^d, \Sigma_r\in\mathrm{SPD}(d)$ define $\mu_{r+1}$, $\Sigma_{r+1}$,
$\nu_{r+1}^{\mathrm{aEM}}$, $\nu_{r+1}^{\mathrm{MMF}}$ and $\nu_{r+1}^{\mathrm{GMMF}}$ by Algorithm \ref{alg:aEM}, \ref{alg:MMF} and \ref{alg:GMMF},
respectively. For the GMMF algorithm assume that the inner loop converges. 
Then it holds
$$
L(\nu_r,\mu_{r+1},\Sigma_{r+1}) 
\geq L(\nu_{r+1}^{\mathrm{aEM}},\mu_{r+1},\Sigma_{r+1})
\geq L(\nu_{r+1}^{\mathrm{MMF}},\mu_{r+1},\Sigma_{r+1})
\geq L(\nu_{r+1}^{\mathrm{GMMF}},\mu_{r+1},\Sigma_{r+1}).
$$
Equality holds true if and only if $\frac{\mathrm{d}}{\mathrm{d}\nu}L(\nu_r,\mu_{r+1},\Sigma_{r+1})=0$ and
in this case $\nu_{r} = \nu_{r+1}^{\mathrm{aEM}} = \nu_{r+1}^{\mathrm{MMF}} = \nu_{r+1}^{\mathrm{GMMF}}$.
\end{Corollary}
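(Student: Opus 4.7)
The plan is to reduce the whole statement to Lemma~\ref{lem:function_decreasing} applied to the one-variable function $\nu \mapsto L(\nu,\mu_{r+1},\Sigma_{r+1})$. After the change of variable $x = \nu/2$ (with $t = d/2$ and $s_i = \tfrac12\delta_{i,r+1}$), the partial derivative $\frac{\partial L}{\partial \nu}(2x,\mu_{r+1},\Sigma_{r+1})$ coincides with the function $F$ of \eqref{function_F}, so that $G(x) \coloneqq \tfrac12 L(2x,\mu_{r+1},\Sigma_{r+1})$ is an antiderivative of $F$ on $\R_{>0}$. Monotonicity of $G$ at the iterates $\nu_r/2, \nu_{r+1}^{\mathrm{aEM}}/2, \nu_{r+1}^{\mathrm{MMF}}/2, \nu_{r+1}^{\mathrm{GMMF}}/2$ therefore translates directly into the claimed monotonicity of $L$.

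Next, I would identify the splittings $F = F_a + F_b$ corresponding to the three algorithms. For aEM set $F_a^{\mathrm{aEM}}(x) = \phi(x)$ and $F_b^{\mathrm{aEM}}(x) = -\phi(x+t) + \sum_{i=1}^n w_i B_{s_i}(x)$; for MMF (and equivalently for one step of the GMMF inner loop) set $F_a^{\mathrm{MMF}}(x) = A(x) = \phi(x) - \phi(x+t)$ and $F_b^{\mathrm{MMF}}(x) = \sum_{i=1}^n w_i B_{s_i}(x)$. A short computation shows that in both cases the update $\nu_{r+1}$ is exactly the zero of $F_a(x) + F_b(x_0)$ with $x_0 = \nu_r/2$. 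The hypotheses of Lemma~\ref{lem:function_decreasing} then follow from Section~\ref{sec:zero_F}: complete monotonicity of $-\phi$ gives $\phi' > 0$, so $F_a^{\mathrm{aEM}}$ is strictly increasing and $-\phi(\cdot + t)$ is strictly decreasing; strict monotonicity $A' > 0$ is recorded in \eqref{cm_A}; and complete monotonicity of each $B_{s_i}$ (cf.~\eqref{cm_B}) yields that each $F_b$ is strictly decreasing.

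With the splittings in place, the inequality $L(\nu_r,\cdot,\cdot) \geq L(\nu_{r+1}^{\mathrm{aEM}},\cdot,\cdot) \geq L(\nu_{r+1}^{\mathrm{MMF}},\cdot,\cdot)$ is exactly Lemma~\ref{lem:function_decreasing}(ii). Indeed, $(F_a^{\mathrm{aEM}})'(x) - (F_a^{\mathrm{MMF}})'(x) = \phi'(x+t) > 0$ for every $x>0$, so taking $\tilde F_a = F_a^{\mathrm{aEM}}$ and $F_a = F_a^{\mathrm{MMF}}$ identifies $y_1 = \nu_{r+1}^{\mathrm{aEM}}/2$ and $x_1 = \nu_{r+1}^{\mathrm{MMF}}/2$, and the lemma delivers $G(\nu_r/2) \geq G(\nu_{r+1}^{\mathrm{aEM}}/2) \geq G(\nu_{r+1}^{\mathrm{MMF}}/2)$ with equality iff $\nu_r/2$ is a critical point of $G$. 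The final inequality $L(\nu_{r+1}^{\mathrm{MMF}},\cdot,\cdot) \geq L(\nu_{r+1}^{\mathrm{GMMF}},\cdot,\cdot)$ follows from Lemma~\ref{lem:function_decreasing}(i): the inner loop of Algorithm~\ref{alg:GMMF} is precisely the MMF update iterated, with $\nu_{r,1} = \nu_{r+1}^{\mathrm{MMF}}$, so $(G(\nu_{r,l}/2))_l$ is non-increasing, and continuity of $G$ together with the assumed convergence $\nu_{r,l} \to \nu_{r+1}^{\mathrm{GMMF}}$ yields $G(\nu_{r+1}^{\mathrm{MMF}}/2) \geq G(\nu_{r+1}^{\mathrm{GMMF}}/2)$.

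The main subtlety is purely bookkeeping: one has to verify that the ``zero of'' equations in Algorithms~\ref{alg:aEM}--\ref{alg:GMMF} really coincide, term by term, with $F_a(x) + F_b(x_0) = 0$ for the splittings above, and to confirm the strict comparison $(F_a^{\mathrm{aEM}})' > (F_a^{\mathrm{MMF}})'$ on $\R_{>0}$. Once this is done, the equality characterization is automatic from the ``iff $x_0$ is a critical point of $G$'' clause of Lemma~\ref{lem:function_decreasing}, which unwinds to $F(\nu_r/2) = 0$, i.e.\ $\frac{\mathrm{d}}{\mathrm{d}\nu}L(\nu_r,\mu_{r+1},\Sigma_{r+1}) = 0$. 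In that case $\nu_r$ is already a zero of all three defining equations, so $\nu_r = \nu_{r+1}^{\mathrm{aEM}} = \nu_{r+1}^{\mathrm{MMF}} = \nu_{r+1}^{\mathrm{GMMF}}$.
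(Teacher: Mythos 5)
Your proposal is correct and follows essentially the same route as the paper's proof: both reduce the claim to Lemma~\ref{lem:function_decreasing} applied to $\nu\mapsto L(\nu,\mu_{r+1},\Sigma_{r+1})$, use the two splittings $F_a=\phi(\cdot)-\phi(\cdot+t)$ versus $\tilde F_a=\phi(\cdot)$ with the comparison $\tilde F_a'-F_a'=\phi'(\cdot+t)>0$ for the aEM/MMF inequality, and invoke part (i) of the lemma together with convergence of the inner loop for the GMMF inequality. The only difference is your cosmetic substitution $x=\nu/2$, which merely aligns the notation with Section~\ref{sec:zero_F}.
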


\begin{proof}
For $G(\nu) \coloneqq L(\nu,\mu_{r+1},\Sigma_{r+1})$, we have 
$\frac{\mathrm{d}}{\mathrm{d}\nu} L(\nu,\mu_{r+1},\Sigma_{r+1}) = F(\nu)$,
where
$$
F(\nu) \coloneqq \phi\left( \frac{\nu}{2} \right) 
			-\phi\left( \frac{\nu +d}{2} \right)
			+ \sum_{i=1}^n w_i \left( \frac{\nu + d}{\nu +  \delta_{i,r+1}}  - 
			\log\left( \frac{\nu + d}{\nu +  \delta_{i,r+1}} \right) - 1 \right).
$$
We use the splitting 
$$F = F_a + F_b = \tilde F_a + \tilde F_b$$
with 
$$
F_a (\nu)\coloneqq 
\phi\left(\frac\nu2 \right)- \phi\left(\frac{\nu + d}{2} \right), \quad
\tilde F_a \coloneqq \phi\left(\frac\nu2 \right)
$$ 
and 
$$
F_b(\nu) \coloneqq 
\sum_{i=1}^n w_i \left( \frac{\nu + d}{\nu +  \delta_{i,r+1}}  - 
			\log\left( \frac{\nu + d}{\nu +  \delta_{i,r+1}} \right) - 1 \right), \quad
\tilde F_b (\nu)\coloneqq - \phi \left(\frac{\nu+d}{2} \right) + F_b(\nu).
$$
By the considerations in the previous section we know that $F_a$, $\tilde F_a$ are strictly increasing
and $F_b$, $\tilde F_b$ are strictly decreasing.
Moreover, since $\phi' > 0$ we have $\tilde F'_a > F'_a$.
Hence it follows from Lemma \ref{lem:function_decreasing}(ii) that 
$L(\nu_r,\mu_{r+1},\Sigma_{r+1}) \ge L(\nu_r^{\mathrm{aEM}},\mu_{r+1},\Sigma_{r+1}) \ge  L(\nu_r^{\mathrm{MMF}},\mu_{r+1},\Sigma_{r+1})$.
Finally, we conclude by Lemma \ref{lem:function_decreasing}(i) that 
$L(\nu_r^{\mathrm{MMF}},\mu_{r+1},\Sigma_{r+1}) \ge L(\nu_r^{\mathrm{GMMF}},\mu_{r+1},\Sigma_{r+1})$.
\end{proof}

Concerning the convergence of the three algorithms we have the following result.

\begin{Theorem}\label{cor:likelihood_decreases}
Let $(\nu_r,\mu_r,\Sigma_r)_r$ be sequence generated by Algorithm \ref{alg:aEM}, \ref{alg:MMF} or \ref{alg:GMMF}, respectively
starting with arbitrary initial values $\nu_0 >0,\mu_0\in\R^d,\Sigma_0\in\mathrm{SPD}(d)$.
For the GMMF algorithm we assume that in each step the inner loop converges. 
Then it holds for all $r\in\mathbb N_0$ that
$$
L(\nu_r,\mu_r,\Sigma_r) \geq L(\nu_{r+1},\mu_{r+1},\Sigma_{r+1}),
$$
with equality if and only if $(\nu_r,\mu_r,\Sigma_r)=(\nu_{r+1},\mu_{r+1},\Sigma_{r+1})$.
\end{Theorem}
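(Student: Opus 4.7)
The plan is to split the one-step change into the two blocks of the update,
$$
L(\nu_r,\mu_r,\Sigma_r) \;\geq\; L(\nu_r,\mu_{r+1},\Sigma_{r+1}) \;\geq\; L(\nu_{r+1},\mu_{r+1},\Sigma_{r+1}),
$$
and to handle the equality characterization in each step separately. Since the $(\mu,\Sigma)$-update is identical for Algorithms \ref{alg:aEM}, \ref{alg:MMF} and \ref{alg:GMMF}, and the three $\nu$-updates are exactly those compared in Corollary \ref{lem:likelihood_decreasing_nu}, this decomposition addresses all three algorithms simultaneously.

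The second inequality will follow directly from Corollary \ref{lem:likelihood_decreasing_nu}: given the common $(\mu_{r+1},\Sigma_{r+1})$, any of the three $\nu$-iterations does not increase $L(\cdot,\mu_{r+1},\Sigma_{r+1})$, and that corollary already identifies equality as $\nu_r$ being a critical point of $L(\cdot,\mu_{r+1},\Sigma_{r+1})$, in which case $\nu_{r+1}=\nu_r$. For the GMMF step I would additionally rely on the assumed convergence of the inner loop, so that Corollary \ref{lem:likelihood_decreasing_nu} indeed applies to its limit.

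For the first inequality I would invoke the EM interpretation of the aEM $(\mu,\Sigma)$-step from \cite{vanDyk1995}, refined as in \cite{Laus2019}. Using the latent-variable representation \eqref{stochastic_representation} (with the appropriate rescaling of the Gamma variate that underlies van Dyk's acceleration), one constructs a Q-function $Q(\mu,\Sigma\mid\mu_r,\Sigma_r)$ whose unique minimizer is precisely $(\mu_{r+1},\Sigma_{r+1})$ from \eqref{eq:aem}. The standard Jensen inequality applied to the conditional density ratio of the latent variable then yields, for every admissible $(\mu,\Sigma)$,
$$
L(\nu_r,\mu,\Sigma) - L(\nu_r,\mu_r,\Sigma_r) \;\leq\; Q(\mu,\Sigma\mid\mu_r,\Sigma_r) - Q(\mu_r,\Sigma_r\mid\mu_r,\Sigma_r),
$$
so that evaluating at $(\mu_{r+1},\Sigma_{r+1})$ makes the right-hand side nonpositive. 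Strict inequality in Jensen fails only when the latent density ratio is almost surely constant, and under Assumption \ref{Ass:lin_ind} this forces $(\mu_{r+1},\Sigma_{r+1})=(\mu_r,\Sigma_r)$; combining this with the equality clause of Corollary \ref{lem:likelihood_decreasing_nu} gives the desired characterization $L(\nu_r,\mu_r,\Sigma_r)=L(\nu_{r+1},\mu_{r+1},\Sigma_{r+1})$ iff $(\nu_r,\mu_r,\Sigma_r)=(\nu_{r+1},\mu_{r+1},\Sigma_{r+1})$.

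The main obstacle I foresee is that \cite{vanDyk1995} formulates its EM interpretation only for $\nu_r \geq 1$, whereas the statement allows $\nu_0>0$ arbitrary and hence $\nu_r \in (0,1)$ may occur after the first $\nu$-step. To close this gap I would revisit the rescaled complete-data construction and verify that the only ingredients genuinely used in the monotonicity argument — well-definedness of the Gamma density $\Gamma(\nu_r/2,\nu_r/2)$ on $(0,\infty)$, Jensen's inequality for the latent density ratio, and the closed-form partial M-step in $(\mu,\Sigma)$ coinciding with \eqref{eq:aem} — all go through unchanged for every $\nu_r>0$. The naive direct Jensen attempt on $\log(\nu_r + \delta_i)$ is a red herring here: it recovers the classical EM surrogate $\sum_i w_i\gamma_{i,r}\delta_i + \log|\Sigma|$, which is minimized by the unnormalized EM update rather than by the aEM update in \eqref{eq:aem}, and the discrepancy $d(c-1-\log c)$ with $c=\sum_i w_i \gamma_{i,r}$ points in the wrong direction — this is precisely why the rescaled-latent EM interpretation is needed.
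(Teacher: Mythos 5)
Your proposal follows essentially the same route as the paper: the identical two-step decomposition through $L(\nu_r,\mu_{r+1},\Sigma_{r+1})$, with the $\nu$-step handled by Corollary \ref{lem:likelihood_decreasing_nu} and the $(\mu,\Sigma)$-step justified by the monotonicity of the accelerated EM update for fixed $\nu$, which the paper simply cites from the literature (see \cite{LS2019}) whereas you spell out the underlying Q-function/Jensen argument and correctly flag the $\nu_r<1$ subtlety. There is no substantive difference in approach.
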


\begin{proof}
By the general convergence results of the accelerated EM algorithm for fixed $\nu$, see also
\cite{LS2019}, it holds
$$
L(\nu_r,\mu_{r+1},\Sigma_{r+1})\leq L(\nu_r,\mu_r,\Sigma_r),
$$
with equality if and only if $(\mu_r,\Sigma_r)=(\mu_{r+1},\Sigma_{r+1})$. 
By Corollary \ref{lem:likelihood_decreasing_nu} it holds
$$
L(\nu_{r+1},\mu_{r+1},\Sigma_{r+1})\leq L(\nu_r,\mu_{r+1},\Sigma_{r+1}),
$$
with equality if and only if $\nu_r=\nu_{r+1}$. 
The combination of both results proves the claim.
\end{proof}

\begin{Lemma}\label{lem:Tcont}
Let $T = (T_1, T_2, T_3): \R_{>0} \times \R^d \times \SPD(d) \rightarrow \R_{>0} \times \R^d \times \SPD(d)$ 
be the operator of one iteration step of Algorithm \ref{alg:aEM} (or \ref{alg:MMF}). 
Then $T$ is continuous.
\end{Lemma}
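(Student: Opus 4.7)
The plan is to decompose the iteration operator $T = (T_1, T_2, T_3)$ into its three components and verify continuity of each separately, keeping in mind that the $\nu$-update is only implicitly defined. As a preliminary step, I would observe that matrix inversion is continuous on $\SPD(d)$ (and more generally on $\GL(d)$), so the quantities $\delta_i(\nu,\mu,\Sigma) = (x_i-\mu)^\tT \Sigma^{-1}(x_i-\mu)$ depend continuously on the inputs. Since $\nu > 0$, the weights $\gamma_i = (\nu+d)/(\nu+\delta_i)$ are continuous and strictly positive, and in particular $\sum_i w_i \gamma_i > 0$ everywhere on the domain.

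For $T_2$, the defining formula is a quotient of continuous functions with strictly positive denominator $\sum_i w_i \gamma_i$, hence continuous. For $T_3$, I would plug $T_2$ into the $\Sigma$-update formula and use that sums, products, and outer products of continuous quantities are continuous; strict positive-definiteness of the image (so that $T_3$ genuinely lands in $\SPD(d)$) uses Assumption \ref{Ass:lin_ind}: since $n\ge d+1$ and any $d$ of the $x_i$ are linearly independent, the vectors $\{x_i - T_2(\nu,\mu,\Sigma)\}_i$ span $\R^d$, so the weighted sum of rank-one outer products is positive definite.

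The core of the argument is the continuity of $T_1$. In both algorithms $T_1$ is defined implicitly as the unique positive $\nu$ with $H(\nu/2) = -c(\nu_r,\mu_r,\Sigma_r)$, where for aEM $H = \phi$ and the constant $c$ is the quantity denoted $c_r$ in \eqref{poly_EM}, while for MMF $H(x) = A(x) = \phi(x) - \phi(x+t)$ with $t = d/2$ and the constant is $b_r$ from \eqref{eq:alternative}. By \eqref{cm_A}, \eqref{asymp_phi} and \eqref{asymp_A}, both $\phi$ and $A$ are continuous, strictly increasing bijections from $(0,\infty)$ onto $(-\infty,0)$, hence their inverses are continuous on $(-\infty,0)$. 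Since $c$ and $b$ are manifestly continuous and strictly positive functions of $(\nu_r,\mu_r,\Sigma_r)$ (they are continuous combinations of $\phi$ applied at positive arguments and of the strictly positive terms $\gamma_{i,r+1} - \log\gamma_{i,r+1} - 1 \ge 0$), the composition $T_1 = 2H^{-1}(-c)$ or $T_1 = 2H^{-1}(-b)$ is continuous.

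The main obstacle is precisely the implicit nature of the $\nu$-update: a priori the zero-finding step need not yield a continuous selection. This dissolves once we invoke the strict monotonicity and the precise range information already established in Section \ref{sec:zero_F}, which together turn the implicit definition into an explicit continuous inverse. Everything else reduces to the elementary fact that rational expressions with nonvanishing denominators are continuous, together with Assumption \ref{Ass:lin_ind} to ensure the image of $T_3$ lies in $\SPD(d)$.
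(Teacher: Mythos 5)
Your proof is correct, and it reaches the same three-way decomposition and the same crux (strict monotonicity of the $\nu$-update equation in the new variable), but it resolves that crux by a different and more elementary device than the paper. The paper treats $T_1$ as the zero of $\Psi(x,\nu,T_2,T_3)$, checks $\frac{\partial}{\partial x}\Psi>0$, and invokes the Implicit Function Theorem to get a continuously differentiable local solution map. You instead observe that the equation has the separated form $A(x/2)=-b$ (resp.\ $\phi(x/2)=-c$ for aEM), that $A$ and $\phi$ are continuous strictly increasing bijections of $(0,\infty)$ onto $(-\infty,0)$ by \eqref{cm_A}, \eqref{asymp_phi}, \eqref{asymp_A}, and that $b,c>0$ depend continuously on the inputs, so $T_1=2A^{-1}(-b)$ is an explicit composition of continuous maps. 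This buys you a global, explicit formula and avoids any differentiability hypothesis on $\Psi$ in the parameters, at the price of nothing; you also supply a detail the paper elides, namely that $T_3$ actually lands in $\SPD(d)$ via Assumption \ref{Ass:lin_ind}. Two cosmetic points: for aEM the additive constant is not literally the $c_r$ of \eqref{poly_EM} (which uses $\delta_{i,r}$) but its analogue with $\delta_{i,r+1}$ --- this changes nothing in the argument --- and strict positivity of $b$ requires that not all $\gamma_{i,r+1}$ equal $1$, a degenerate case the paper also silently excludes when it asserts $b_r>0$ after \eqref{eq:alternative}.
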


\begin{proof}
We show the statement for Algorithm \ref{alg:MMF}. For Algorithm \ref{alg:aEM} it can be shown analogously. 
Clearly the mapping $(T_2,T_3) (\nu,\mu,\Sigma)$ is continuous. 
Since 
$$T_1(\nu,\mu,\Sigma) = \text{zero of } \Psi(x, \nu,T_2(\nu,\mu,\Sigma),T_3(\nu,\mu,\Sigma)),$$ 
where
\begin{align}
\Psi(x,\nu,\mu,\Sigma)
&=\phi\left(\frac{x}{2}\right)-\phi\left(\frac{x+d}{2}\right)\\
&+\sum_{i=1}^n w_i\left(\frac{\nu+d}{\nu+(x_i-\mu)^T\Sigma^{-1}(x_i-\mu)}-\log\left(\frac{\nu+d}{\nu+(x_i-\mu)^T\Sigma^{-1}(x_i-\mu)}\right)-1\right).
\end{align}
It is sufficient to show that the zero of $\Psi$ depends continuously on $\nu$, $T_2$ and $T_3$. 
Now the continuously differentiable function $\Psi$ is strictly increasing in $x$, so that  $\frac{\partial}{\partial x} \Psi(x,\nu,T_2,T_3)>0$. 
By $\Psi(T_1,\nu,T_2,T_3)=0$, the Implicit Function Theorem yields the following statement: 
There exists an open neighborhood $U\times V$ of $(T_1,\nu,T_2,T_3)$ with $U\subset\R_{>0}$ 
and $V\subset \R_{>0}\times\R^d\times\SPD(d)$ and a continuously differentiable function $G\colon V\to U$ 
such that for all $(x,\nu,\mu,\Sigma)\in U\times V$ it holds
$$
\Psi(x,\nu,\mu,\Sigma)=0 \quad \text{if and only if}\quad G(\nu,\mu,\Sigma)=x.
$$
Thus the zero of $\Psi$ depends continuously on $\nu$, $T_2$ and $T_3$.
\end{proof}

This implies the following theorem.

\begin{Theorem}
Let $(\nu_r,\mu_r,\Sigma_r)_r$ be the sequence generated by Algorithm \ref{alg:aEM} or \ref{alg:MMF} 
with arbitrary initial values $\nu_0 >0,\mu_0\in\R^d,\Sigma_0\in\mathrm{SPD}(d)$. 
Then every cluster point of $(\nu_r,\mu_r,\Sigma_r)_r$ is a critical point of $L$.
\end{Theorem}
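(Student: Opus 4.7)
The plan is to combine two tools already at hand: the monotone decrease of $L$ along the iterates provided by Theorem~\ref{cor:likelihood_decreases} together with the continuity of the update operator $T$ from Lemma~\ref{lem:Tcont}. These will force every cluster point to be a fixed point of $T$, and a short algebraic argument will then identify the fixed points of $T$ with the critical points of $L$.

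For the first part, I fix a cluster point $(\nu^*,\mu^*,\Sigma^*) \in \R_{>0} \times \R^d \times \SPD(d)$ and extract a subsequence $(\nu_{r_k},\mu_{r_k},\Sigma_{r_k})_k$ tending to it. Continuity of $L$ gives $L(\nu_{r_k},\mu_{r_k},\Sigma_{r_k}) \to L(\nu^*,\mu^*,\Sigma^*)$, and since the whole sequence $(L(\nu_r,\mu_r,\Sigma_r))_r$ is monotone decreasing by Theorem~\ref{cor:likelihood_decreases}, it converges to the same limit. Using Lemma~\ref{lem:Tcont}, the shifted subsequence $(\nu_{r_k+1},\mu_{r_k+1},\Sigma_{r_k+1}) = T(\nu_{r_k},\mu_{r_k},\Sigma_{r_k})$ converges to $T(\nu^*,\mu^*,\Sigma^*)$, so continuity of $L$ together with convergence of the full $L$-sequence yields $L(T(\nu^*,\mu^*,\Sigma^*)) = L(\nu^*,\mu^*,\Sigma^*)$. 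The strict equality case in Theorem~\ref{cor:likelihood_decreases} then forces $T(\nu^*,\mu^*,\Sigma^*) = (\nu^*,\mu^*,\Sigma^*)$.

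It then remains to verify that every fixed point of $T$ is a critical point of $L$. Writing $\delta_i^* \coloneqq (x_i-\mu^*)^\tT (\Sigma^*)^{-1}(x_i-\mu^*)$ and $\gamma_i^* \coloneqq (\nu^*+d)/(\nu^*+\delta_i^*)$, the $\mu$-identity $\mu^* = \sum_i w_i \gamma_i^* x_i / \sum_i w_i \gamma_i^*$ is an immediate restatement of~\eqref{mult_cond_a}. For either Algorithm~\ref{alg:aEM} or Algorithm~\ref{alg:MMF}, the $\nu$-update collapses at $\nu_r = \nu^*$ exactly to $F(\nu^*/2) = 0$, i.e.\ to~\eqref{ML_nu}.

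The only nontrivial point, and the one I expect to be the main obstacle, is the $\Sigma$-equation: the fixed-point identity reads
\[
\Sigma^* = \frac{\sum_i w_i \gamma_i^* (x_i-\mu^*)(x_i-\mu^*)^\tT}{\sum_i w_i \gamma_i^*},
\]
whereas the critical equation~\eqref{mult_cond_S} carries no such normalization. I would bridge the gap by taking the trace of $(\Sigma^*)^{-1}\Sigma^* = I$, which reduces to $\sum_i w_i \gamma_i^* (\delta_i^* - d) = 0$; expanding $\gamma_i^*$ and combining with $\sum_i w_i = 1$ then yields $\sum_i w_i \gamma_i^* = 1$, which is precisely~\eqref{trace_1}. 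Substituting this normalization back into the fixed-point identity recovers~\eqref{mult_cond_S} and completes the verification that cluster points are critical points of $L$.
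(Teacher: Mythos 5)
Your proposal is correct and follows essentially the same route as the paper: monotone decrease of $L$ plus continuity of $T$ force $L(T(\cdot))=L(\cdot)$ at any cluster point, and the equality case of Theorem~\ref{cor:likelihood_decreases} then makes it a fixed point of $T$, hence a critical point of $L$. The only difference is that you explicitly verify the equivalence of fixed points of $T$ with critical points of $L$ (including the trace argument showing $\sum_i w_i\gamma_i^*=1$, which removes the normalization in the $\Sigma$-update), a step the paper states as following directly from the definition of $T$.
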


\begin{proof}
The mapping $T$ defined in Lemma \ref{lem:Tcont} is continuous. Further we know from its definition that $(\nu,\mu,\Sigma)$ 
is a critical point of $L$ if and only if it is a fixed point of $T$. Let $(\hat\nu,\hat\mu,\hat\Sigma)$ be a cluster point of $(\nu_r,\mu_r,\Sigma_r)_r$. 
Then there exists a subsequence $(\nu_{r_s},\mu_{r_s},\Sigma_{r_s})_s$ which converges to $(\hat\nu,\hat\mu,\hat\Sigma)$. 
Further we know by Theorem \ref{cor:likelihood_decreases} that $L_r=L(\nu_r,\mu_r,\Sigma_r)$ is decreasing. 
Since $(L_r)_r$ is bounded from below, it converges. Now it holds
\begin{align}
L(\hat \nu,\hat \mu,\hat \Sigma)&=\lim_{s\to\infty}L(\nu_{r_s},\mu_{r_s},\Sigma_{r_s})\\
&=\lim_{s\to\infty}L_{r_s}=\lim_{s\to\infty}L_{r_s+1}\\
&=\lim_{s\to\infty}L(\nu_{r_s+1},\mu_{r_s+1},\Sigma_{r_s+1})\\
&=\lim_{s\to\infty}L(T(\nu_{r_s},\mu_{r_s},\Sigma_{r_s}))=L(T(\hat\nu,\hat\mu,\hat\Sigma)).
\end{align}
By Theorem \ref{cor:likelihood_decreases} and the definition of $T$ we have that $L(\nu,\mu,\Sigma)=L(T(\nu,\mu,\Sigma))$ if and only if $(\nu,\mu,\Sigma)=T(\nu,\mu,\Sigma)$. By the definition of the algorithm this is the case if and only if $(\nu,\mu,\Sigma)$ is a critical point of $L$. Thus $(\hat\nu,\hat\mu,\hat\Sigma)$ is a critical point of $L$.
\end{proof}

\section{Numerical Results} \label{sec:numerics}
In this section we give two numerical examples of the developed theory. First,  we compare   the four different algorithms in Subsection \ref{sec:comp}.
Then, in Subsection \ref{sec:accel}, we address further accelerations of our algorithms by
SQUAREM \cite{VR2008} and DAAREM \cite{HV2019} and show also a comparison with the ECME algorithm \cite{LR95}.
Finally, in Subsection \ref{sec:images}, we provide an application in image analysis by determining the degree of freedom parameter
in images corrupted by Student-$t$ noise.

\subsection{Comparison of Algorithms} \label{sec:comp}
In this section, we compare the numerical performance of the classical EM algorithm \ref{alg:EM}
and the proposed Algorithms \ref{alg:aEM}, \ref{alg:MMF} and \ref{alg:GMMF}. To this aim, we did the following Monte Carlo simulation: Based on the stochastic representation of the Student-$t$ distribution, see equation~\eqref{stochastic_representation},   we draw $n=1000$ i.i.d. realizations of the $T_\nu(\mu,\Sigma)$  distribution
with location parameter $\mu=0$ and different scatter matrices $\Sigma$ 
and degrees of freedom parameters $\nu$. Then, we used Algorithms \ref{alg:aEM}, \ref{alg:MMF} and \ref{alg:GMMF} to compute the ML-estimator $(\hat\nu,\hat\mu,\hat\Sigma)$. 
 
We initialize all algorithms with  the sample mean for $\mu$ and the sample covariance matrix for $\Sigma$. Furthermore, we set $\nu=3$ and in all algorithms the zero of the respective function is computed by Newtons Method.
As a stopping criterion we use the following relative distance:
$$
\frac{ \sqrt{ \| \mu_{r+1} - \mu_r \|^2 + \| \Sigma_{r+1} -\Sigma_r \|_F^2} }{ \sqrt{\|\mu_r\|^2+\|\Sigma_r\|_F^2} } + \frac{ \sqrt{(\log(\nu_{r+1})-\log(\nu_r))^2}}{\abs{\log(\nu_r)}}<10^{-5}.
$$ 
We take the logarithm of   $\nu$ in the stopping criterion, because $T_\nu(\mu,\Sigma)$ converges to the normal distribution as $\nu\to\infty$ 
and therefore the difference between $T_\nu(\mu,\Sigma)$ and $T_{\nu+1}(\mu,\Sigma)$ becomes small for large $\nu$.

To quantify the performance  of the algorithms, we count the number of iterations until the stopping criterion is reached. 
Since the inner loop of the GMMF is potentially time consuming we additionally measure the execution time until the stopping criterion is reached. 
This experiment is repeated $N=10.000$ times for different values of $\nu\in\{1,2,5,10\}$.
Afterward we calculate the average number of iterations and the average execution times. 
The results are given in Table \ref{tab:performance}.
We observe that the performance of the algorithms depends on $\Sigma$. 
Further we see, that the performance of the aEM algorithm 
is always better than those of the classical EM algorithm. 
Further all algorithms need a longer time to estimate large $\nu$. 
This seems to be natural since the likelihood function becomes very flat for large $\nu$. 
Further, the GMMF needs the lowest number of iterations. 
But for small $\nu$ the execution time of the GMMF 
is larger than those of the MMF and the aEM algorithm. 
This can be explained by the fact, 
that the $\nu$ step has a smaller relevance for small $\nu$ but is still time consuming in the GMMF. 
The MMF needs slightly more iterations 
than the GMMF but if $\nu$ is not extremely large the execution time is smaller 
than for the GMMF and for the aEM algorithm. In summary, the MMF algorithm is proposed as algorithm of choice.

\begin{table}[htp]
\begin{center}
\resizebox*{!}{8cm}{
\begin{tabular}{c|c|c c c c}
$\Sigma$ & $\nu$ & EM & aEM & MMF & GMMF\\\hline
\multirow{5}{7em}{$\left(\begin{array}{cc}0.1&0\\0&0.1\end{array}\right)$}
         & $1$&$\n62.32\pm\n2.50$&$\n23.44\pm\n0.79$&$22.16\pm0.75$&$\mathbf{20.61\pm0.70}$\\
         & $2$&$\n46.17\pm\n1.82$&$\n26.42\pm\n1.08$&$21.48\pm0.94$&$\mathbf{17.79\pm0.80}$\\
         & $5$&$\n50.42\pm11.22$&$\n49.97\pm\n7.48$&$25.28\pm2.61$&$\mathbf{12.14\pm1.73}$\\
         & $10$&$122.62\pm31.74$&$117.40\pm31.65$&$38.16\pm4.51$&$\mathbf{14.32\pm0.96}$\\
         & $100$&$531.07\pm91.41$&$528.14\pm92.19$&$53.66\pm6.98$&$\mathbf{10.76\pm2.07}$\\\hline
\multirow{5}{7em}{$\left(\begin{array}{cc}1&0\\0&1\end{array}\right)$}
         & $1$&$\n62.34\pm\n2.52$&$\n23.43\pm\n0.78$&$22.16\pm0.75$&$\mathbf{20.59\pm0.70}$\\
         & $2$&$\n46.20\pm\n1.81$&$\n26.43\pm\n1.07$&$21.49\pm0.94$&$\mathbf{17.79\pm0.80}$\\
         & $5$&$\n50.68\pm10.86$&$\n50.06\pm\n7.42$&$25.31\pm2.58$&$\mathbf{12.06\pm1.75}$\\
         & $10$&$122.72\pm31.65$&$117.51\pm31.56$&$38.18\pm4.50$&$\mathbf{14.28\pm0.97}$\\
         & $100$&$531.75\pm90.98$&$528.84\pm91.75$&$53.62\pm6.94$&$\mathbf{10.64\pm2.02}$\\\hline
\multirow{5}{7em}{$\left(\begin{array}{cc}10&0\\0&10\end{array}\right)$}
         & $1$&$\n62.35\pm\n2.55$&$\n23.44\pm\n0.78$&$22.15\pm0.76$&$\mathbf{20.59\pm0.71}$\\
         & $2$&$\n46.27\pm\n1.82$&$\n26.45\pm\n1.08$&$21.51\pm0.95$&$\mathbf{17.81\pm0.80}$\\
         & $5$&$\n50.71\pm11.21$&$\n50.15\pm\n7.61$&$25.34\pm2.63$&$\mathbf{12.08\pm1.78}$\\
         & $10$&$122.44\pm30.66$&$117.19\pm30.56$&$38.17\pm4.46$&$\mathbf{14.27\pm0.96}$\\
         & $100$&$533.21\pm89.80$&$530.27\pm90.57$&$53.64\pm6.93$&$\mathbf{10.62\pm2.01}$\\\hline
\multirow{5}{7em}{$\left(\begin{array}{cc}2&-1\\-1&2\end{array}\right)$}
         & $1$&$\n62.32\pm\n2.55$&$\n23.43\pm\n0.78$&$22.15\pm0.76$&$\mathbf{20.60\pm0.70}$\\
         & $2$&$\n46.22\pm\n1.82$&$\n26.43\pm\n1.09$&$21.50\pm0.94$&$\mathbf{17.80\pm0.80}$\\
         & $5$&$\n50.76\pm11.12$&$\n50.21\pm\n7.52$&$25.35\pm2.59$&$\mathbf{12.09\pm1.75}$\\
         & $10$&$122.37\pm31.01$&$117.17\pm30.92$&$38.13\pm4.49$&$\mathbf{14.30\pm0.96}$\\
         & $100$&$530.89\pm91.36$&$527.96\pm92.15$&$53.68\pm7.07$&$\mathbf{10.75\pm2.08}$
\end{tabular}\,
}
\vspace{0.5cm}
\resizebox*{!}{8cm}{
\begin{tabular}{c|c|c c c c}
$\Sigma$ & $\nu$ & EM & aEM & MMF & GMMF\\\hline
\multirow{5}{7em}{$\left(\begin{array}{cc}0.1&0\\0&0.1\end{array}\right)$}
         & $1$&$0.008469\pm0.00111$&$0.003511\pm0.00044$&$\mathbf{0.003498\pm0.00044}$&$0.006954\pm0.00114$\\
         & $2$&$0.006428\pm0.00069$&$0.003995\pm0.00042$&$\mathbf{0.003409\pm0.00036}$&$0.005388\pm0.00061$\\
         & $5$&$0.007237\pm0.00208$&$0.007768\pm0.00181$&$0.004133\pm0.00085$&$\mathbf{0.003752\pm0.00100}$\\
         & $10$&$0.017421\pm0.00532$&$0.017991\pm0.00567$&$0.006187\pm0.00122$&$\mathbf{0.005796\pm0.00110}$\\
         & $100$&$0.070024\pm0.01306$&$0.075191\pm0.01418$&$0.008146\pm0.00131$&$\mathbf{0.005601\pm0.00097}$\\\hline
\multirow{5}{7em}{$\left(\begin{array}{cc}1&0\\0&1\end{array}\right)$}
         & $1$&$0.008645\pm0.00090$&$0.003581\pm0.00034$&$\mathbf{0.003572\pm0.00036}$&$0.007126\pm0.00098$\\
         & $2$&$0.006431\pm0.00074$&$0.003989\pm0.00044$&$\mathbf{0.003417\pm0.00039}$&$0.005427\pm0.00071$\\
         & $5$&$0.006883\pm0.00162$&$0.007352\pm0.00128$&$0.003939\pm0.00058$&$\mathbf{0.003550\pm0.00079}$\\
         & $10$&$0.016434\pm0.00439$&$0.016964\pm0.00470$&$0.005869\pm0.00089$&$\mathbf{0.005493\pm0.00077}$\\
         & $100$&$0.072309\pm0.01507$&$0.077724\pm0.01624$&$0.008363\pm0.00155$&$\mathbf{0.005773\pm0.00117}$\\\hline
\multirow{5}{7em}{$\left(\begin{array}{cc}10&0\\0&10\end{array}\right)$}
         & $1$&$0.008839\pm0.00108$&$0.003664\pm0.00043$&$\mathbf{0.003639\pm0.00042}$&$0.007217\pm0.00104$\\
         & $2$&$0.006516\pm0.00075$&$0.004054\pm0.00048$&$\mathbf{0.003449\pm0.00039}$&$0.005428\pm0.00065$\\
         & $5$&$0.007293\pm0.00207$&$0.007799\pm0.00180$&$0.004149\pm0.00082$&$\mathbf{0.003740\pm0.00098}$\\
         & $10$&$0.020598\pm0.00659$&$0.021193\pm0.00683$&$0.007228\pm0.00167$&$\mathbf{0.006834\pm0.00155}$\\
         & $100$&$0.078682\pm0.01969$&$0.084275\pm0.02087$&$0.009039\pm0.00213$&$\mathbf{0.006246\pm0.00160}$\\\hline
\multirow{5}{7em}{$\left(\begin{array}{cc}2&-1\\-1&2\end{array}\right)$}
         & $1$&$0.008837\pm0.00107$&$0.003648\pm0.00039$&$\mathbf{0.003641\pm0.00041}$&$0.007207\pm0.00104$\\
         & $2$&$0.006481\pm0.00070$&$0.004016\pm0.00041$&$\mathbf{0.003433\pm0.00036}$&$0.005413\pm0.00061$\\
         & $5$&$0.006968\pm0.00167$&$0.007440\pm0.00129$&$0.003965\pm0.00055$&$\mathbf{0.003561\pm0.00077}$\\
         & $10$&$0.016608\pm0.00442$&$0.017107\pm0.00468$&$0.005920\pm0.00092$&$\mathbf{0.005499\pm0.00076}$\\
         & $100$&$0.072354\pm0.01509$&$0.077586\pm0.01619$&$0.008385\pm0.00153$&$\mathbf{0.005715\pm0.00114}$
\end{tabular}
}
\end{center}
\caption{Average number of iterations (top) and execution times (bottom) and the corresponding standard deviations of the different algorithms.}
\label{tab:performance}
\end{table}

In Figure \ref{fig:conv_speed_plots} we exemplarily show the functional values $L(\nu_r,\mu_r,\Sigma_r)$ 
of the four algorithms and samples generated for different values of $\nu$ and $\Sigma=I$.
Note that the $x$-axis of the plots is in log-scale. 
We see that the convergence speed (in terms of number of iterations) 
of the EM algorithm is much slower than those of the MMF/GMMF.  
For small $\nu$ the convergence speed 
of the aEM algorithm is close to the GMMF/MMF, 
but for large $\nu$ it is close to the EM algorithm.

In Figure \ref{fig:hist_plots} we show the histograms of the 
$\nu$-output of $1000$ runs for different values of $\nu$ and $\Sigma=I$. 
Since the $\nu$-outputs of all algorithms are very close together we only plot the output of the GMMF. We see that the accuracy of the estimation of $\nu$ decreases for increasing $\nu$. This can be explained by the fact, that the likelihood function becomes very flat for large $\nu$ such that the estimation of $\nu$ becomes much harder.


\begin{figure}
\centering
\begin{subfigure}[t]{0.5\textwidth}
\centering
\includegraphics[width=0.95\textwidth]{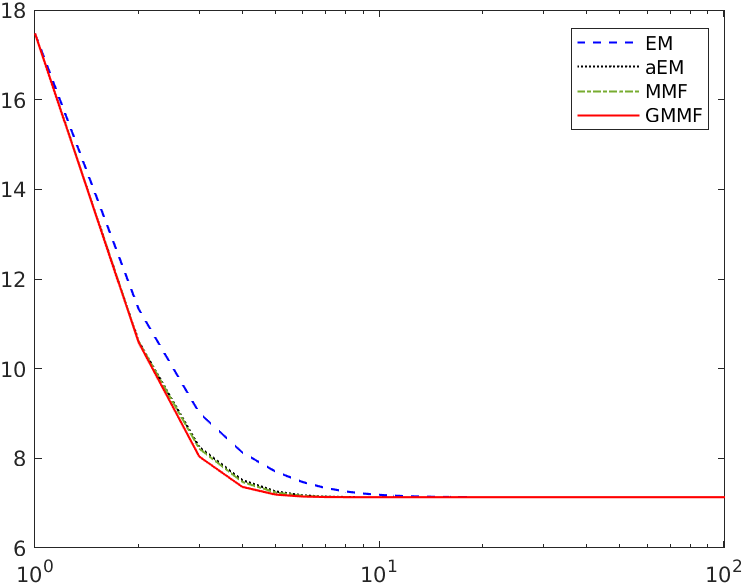}
\caption{$\nu=1$.}
\end{subfigure}\hfill
\begin{subfigure}[t]{0.5\textwidth}
\centering
\includegraphics[width=0.95\textwidth]{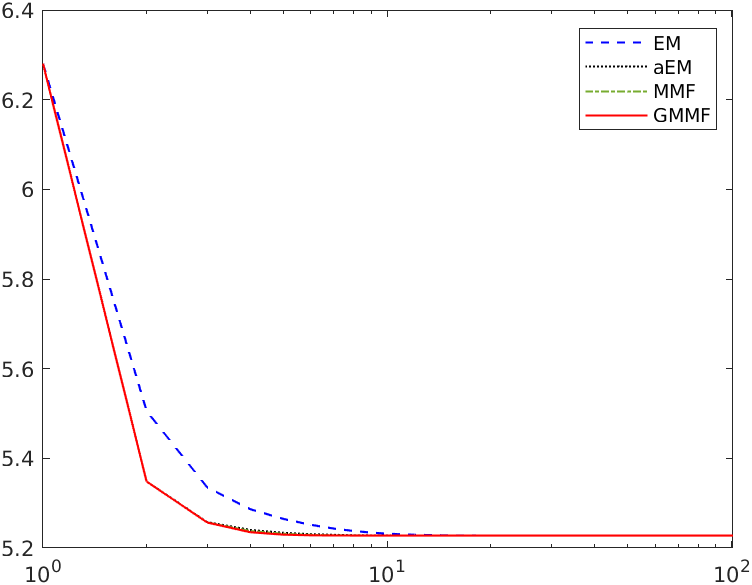}
\caption{$\nu=2$.}
\end{subfigure}\hfill
\begin{subfigure}[t]{0.5\textwidth}
\centering
\includegraphics[width=0.95\textwidth]{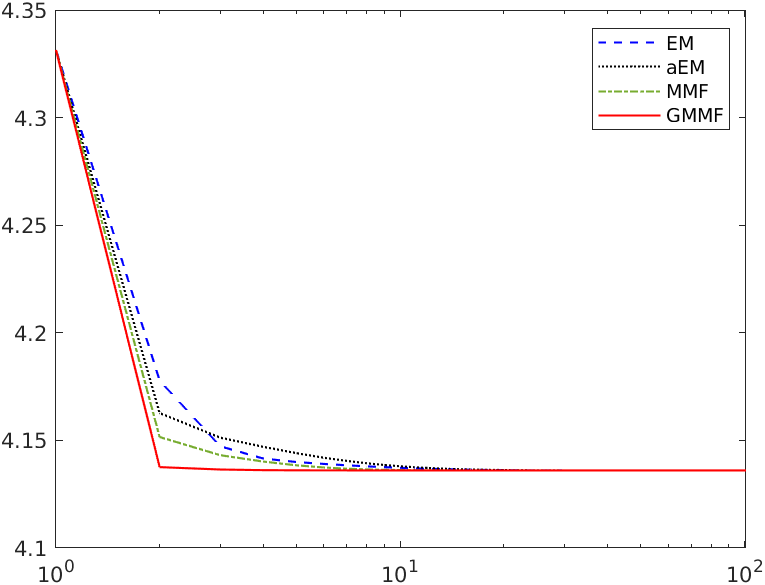}
\caption{$\nu=5$.}
\end{subfigure}\hfill
\begin{subfigure}[t]{0.5\textwidth}
\centering
\includegraphics[width=0.95\textwidth]{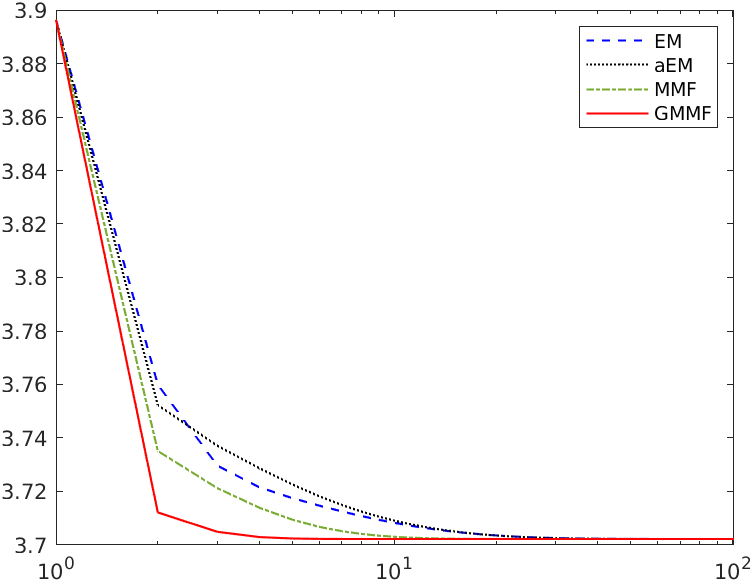}
\caption{$\nu=10$.}
\end{subfigure}
\begin{subfigure}[t]{0.5\textwidth}
\centering
\includegraphics[width=0.95\textwidth]{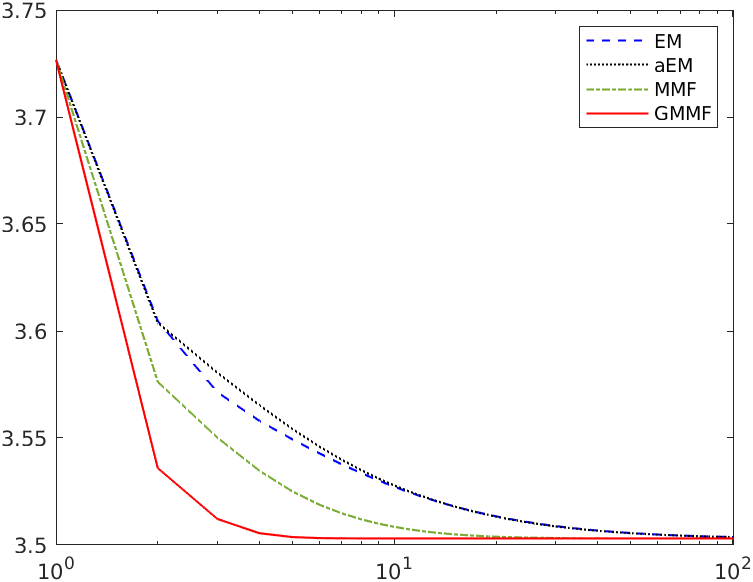}
\caption{$\nu=100$.}
\end{subfigure}\hfill
\begin{subfigure}[t]{0.5\textwidth}
\centering
\includegraphics[width=0.95\textwidth]{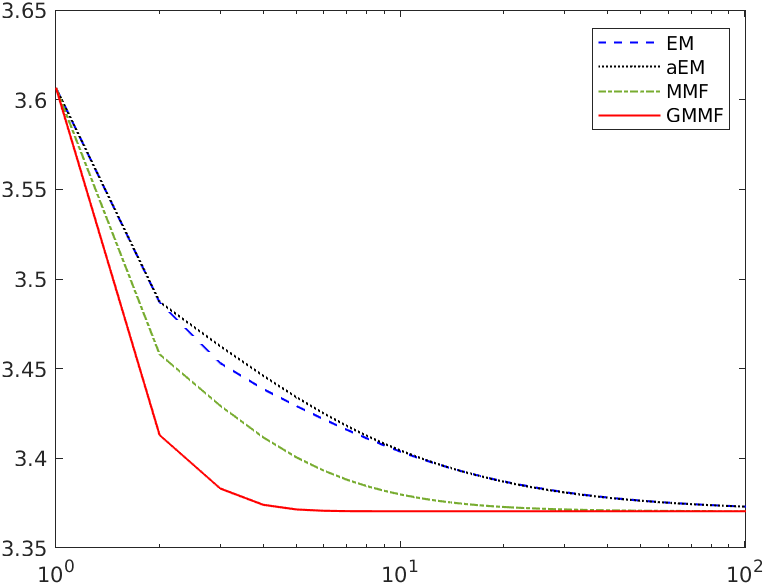}
\caption{$\nu=200$.}
\end{subfigure}\hfill
\caption{Plots of $L(\nu_r,\mu_r,\Sigma_r)$ on the y-axis and $r$ on the x-axis for all algorithms. 
}
\label{fig:conv_speed_plots}
\end{figure}

\begin{figure}
\centering
\begin{subfigure}[t]{0.5\textwidth}
\centering
\includegraphics[width=0.95\textwidth]{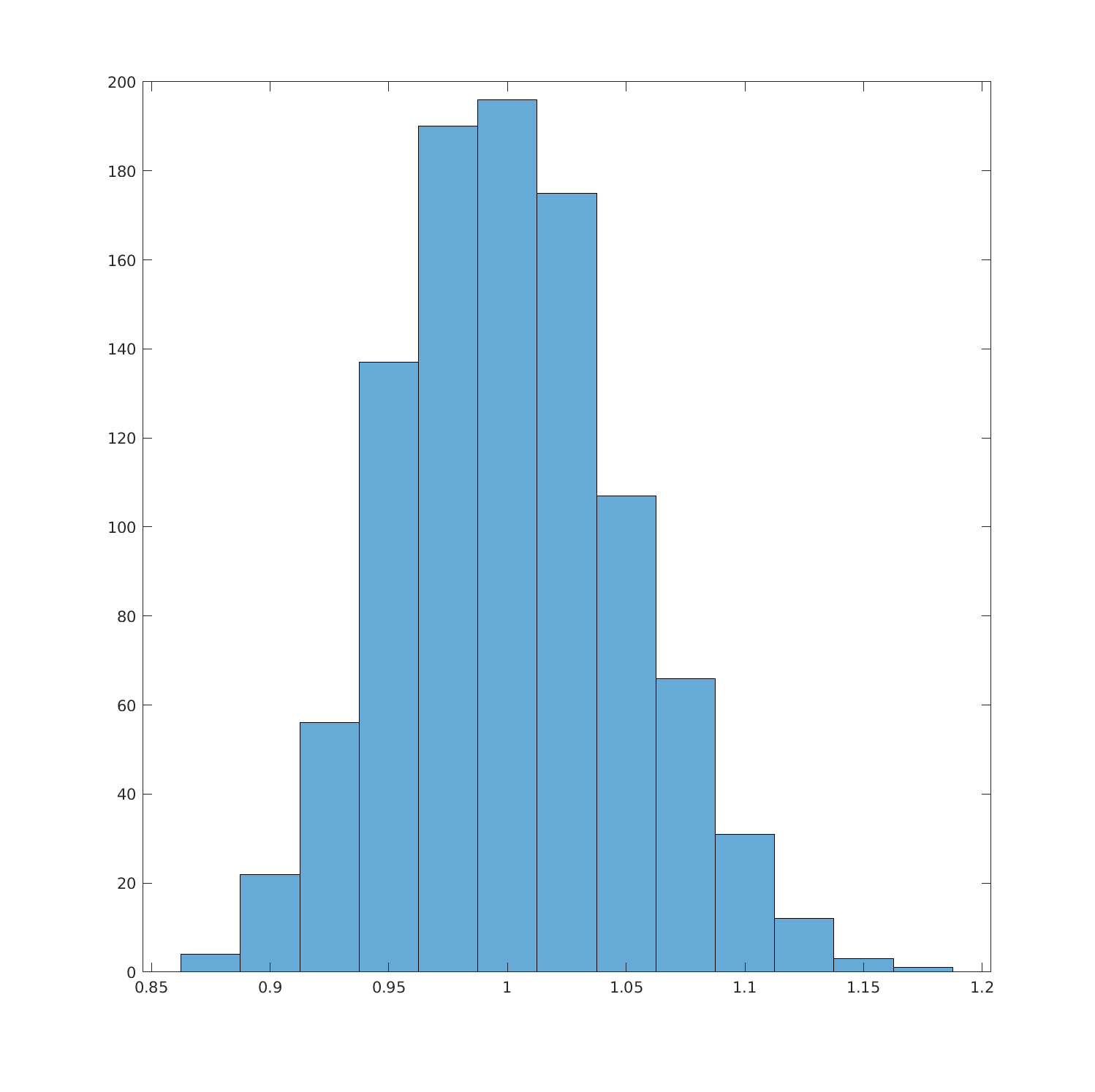}
\caption{$\nu=1$.}
\end{subfigure}\hfill
\begin{subfigure}[t]{0.5\textwidth}
\centering
\includegraphics[width=0.95\textwidth]{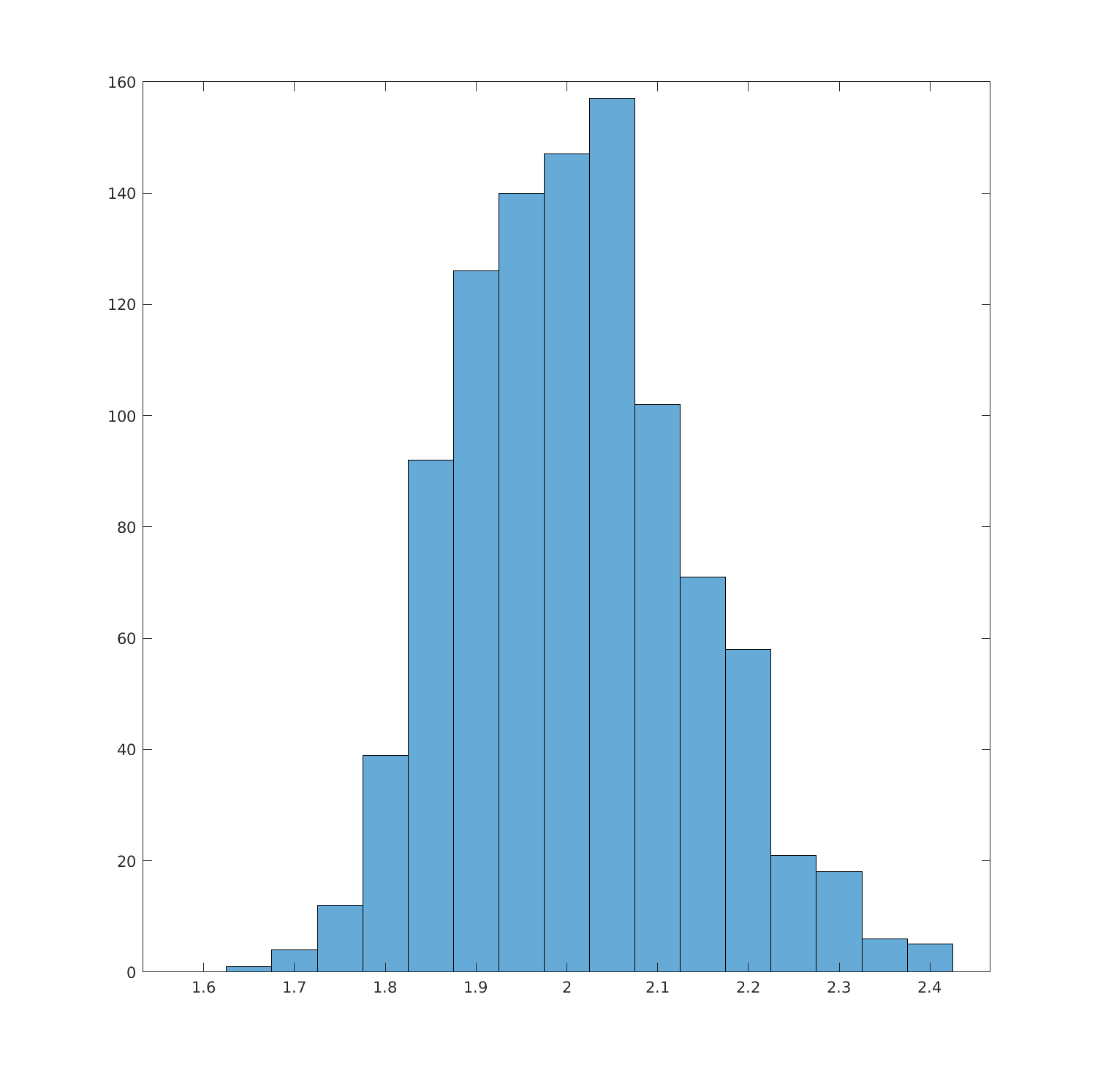}
\caption{$\nu=2$.}
\end{subfigure}\hfill
\begin{subfigure}[t]{0.5\textwidth}
\centering
\includegraphics[width=0.95\textwidth]{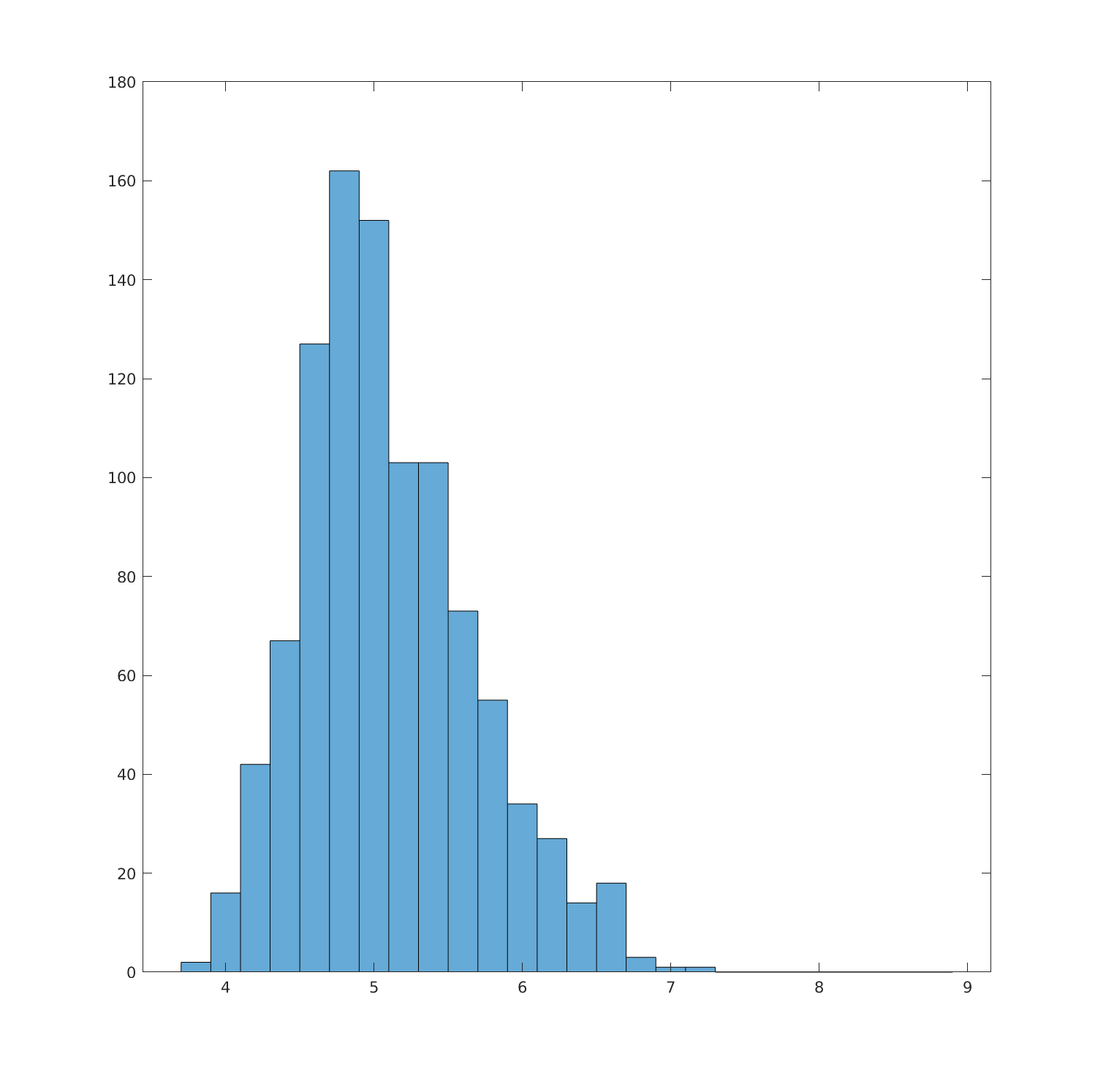}
\caption{$\nu=5$.}
\end{subfigure}\hfill
\begin{subfigure}[t]{0.5\textwidth}
\centering
\includegraphics[width=0.95\textwidth]{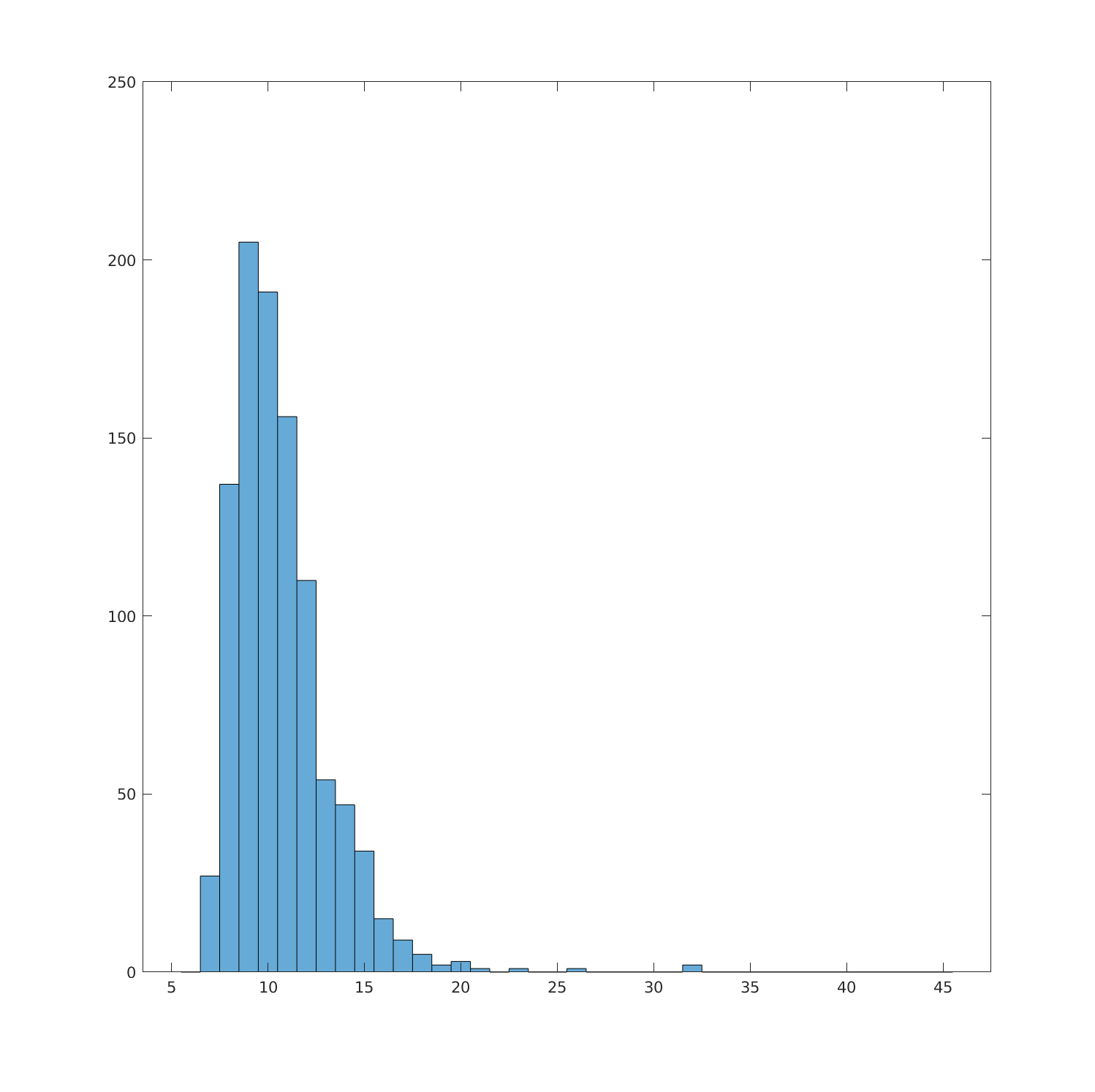}
\caption{$\nu=10$.}
\end{subfigure}
\begin{subfigure}[t]{0.5\textwidth}
\centering
\includegraphics[width=0.95\textwidth]{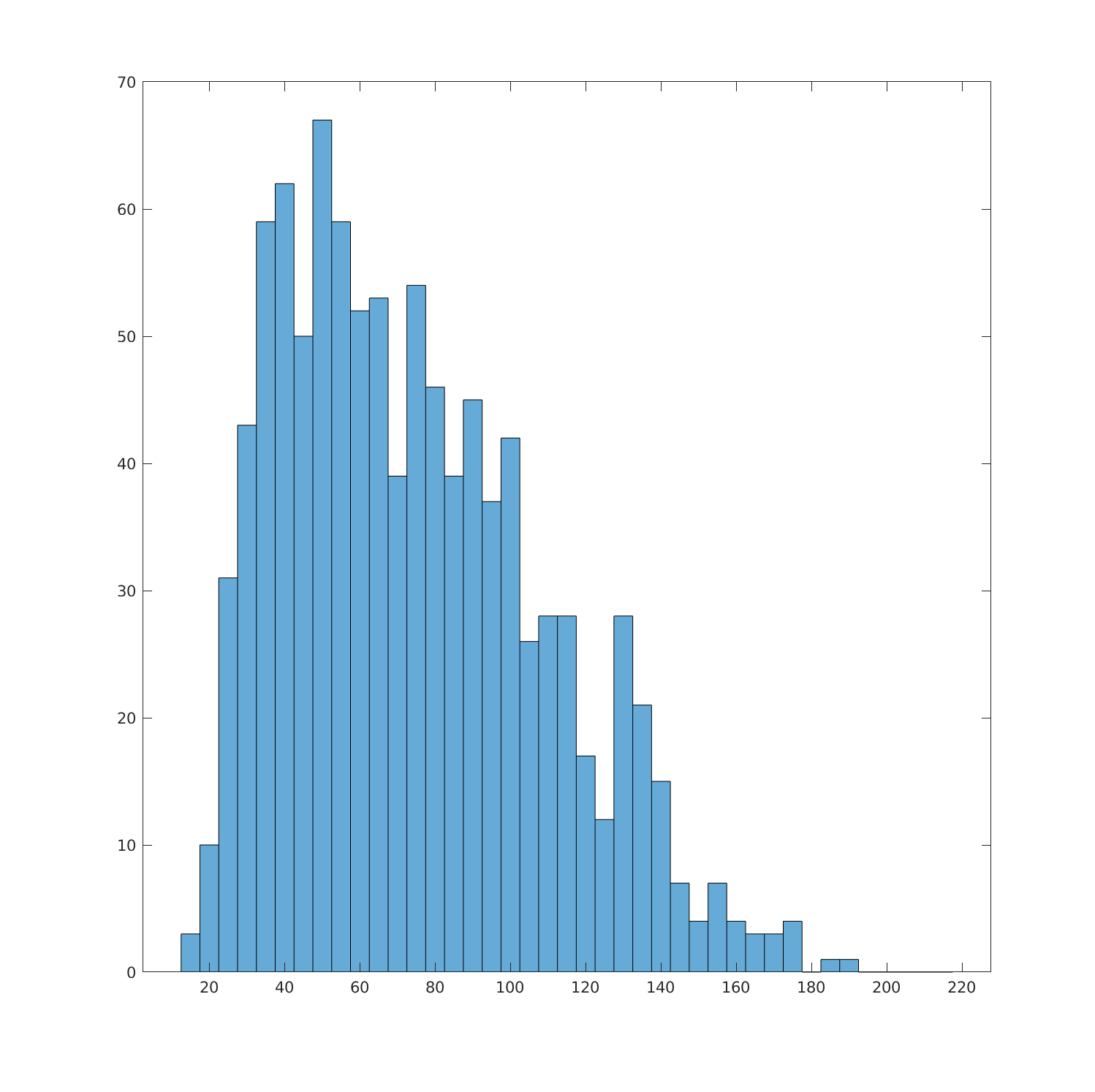}
\caption{$\nu=100$.}
\end{subfigure}\hfill
\begin{subfigure}[t]{0.5\textwidth}
\centering
\includegraphics[width=0.95\textwidth]{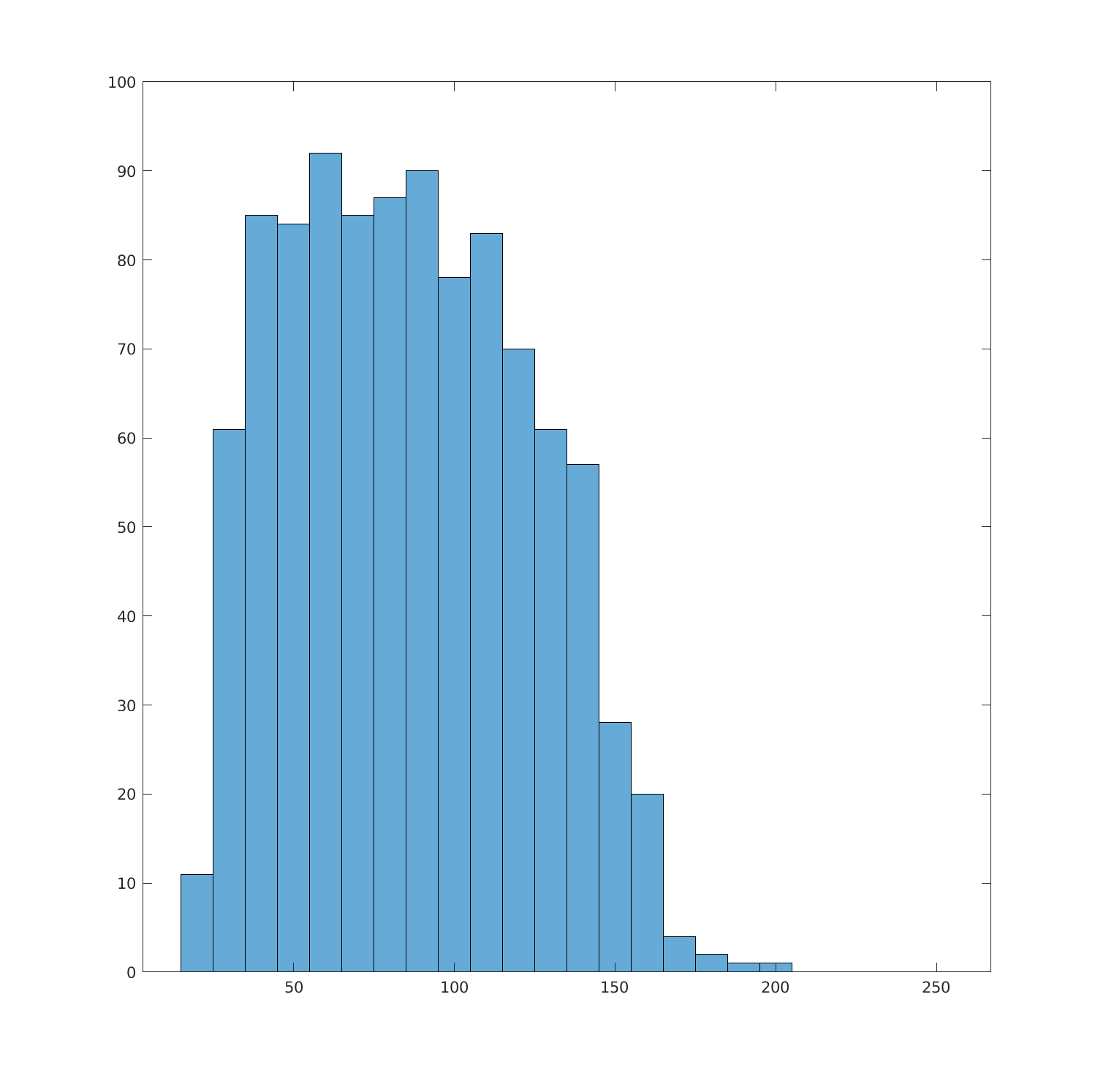}
\caption{$\nu=200$. }
\end{subfigure}\hfill
\caption{Histograms of the output $\nu$ from the algorithms.}
\label{fig:hist_plots}
\end{figure}

\subsection{Comparison with other Accelerations of the EM Algortihm}\label{sec:accel}

In this section, we compare our algorithms with the Expectation/Conditional Maximization Either (ECME) algorithm \cite{LR1994, LR95} and apply the SQUAREM acceleration \cite{VR2008} as well as the damped Anderson Acceleration (DAAREM) \cite{HV2019} to our algorithms.

\paragraph{ECME algorithm:}
The ECME algorithm was first proposed in \cite{LR1994}. 
Some numerical examples of the behavior of the ECME algorithm for estimating the parameters $(\nu,\mu,\Sigma)$ of a Student-$t$ distribution $T_\nu(\mu,\Sigma)$ are given in \cite{LR95}. 
The idea of ECME is first to replace the M-Step of the EM algorithm by the following update of the parameters $(\nu_r,\mu_r,\Sigma_r)$: 
first, we fix $\nu=\nu_r$ and compute the update $(\mu_{r+1},\Sigma_{r+1})$ 
of the parameters $(\mu_r,\Sigma_r)$ by performing one step of the EM algorithm for fixed degree of freedom (CM1-Step). 
Second, we fix $(\mu,\Sigma)=(\mu_r,\Sigma_r)$ and compute the update $\nu_{r+1}$ of $\nu_r$ by maximizing the likelihood function with respect to $\nu$ (CM2-Step).
The resulting algorithm is given in Algorithm \ref{alg:ECME}. 
It is similar to the GMMF (Algorithm \ref{alg:GMMF}), but uses the $\Sigma$-update of the EM algorithm (Algorithm \ref{alg:EM}) instead of the $\Sigma$-update of the aEM algorithm (Algorithm \ref{alg:aEM}). 
The authors of \cite{LR1994}   showed a similar convergence result as for the EM algorithm. 
Alternatively, we could prove Theorem \ref{cor:likelihood_decreases} 
for the ECME algorithm analogously as for the GMMF algorithm.\\

\begin{algorithm}[!ht]
	\caption{ECME Algorithm (ECME)} \label{alg:ECME}
	\begin{algorithmic}
		\State \textbf{Input:} $x_1,\ldots,x_n\in \R^d$, $n \geq d+1$, $w \in \mathring \Delta_n$ 
		\State \textbf{Initialization:} 
		$\nu_0 = \eps>0$,  $\mu_0 =\frac{1}{n} \sum\limits_{i=1}^n x_i$, 
		$\Sigma_0 =\frac{1}{n}\sum\limits_{i=1}^n (x_i-\mu_0)(x_i-\mu_0)^\tT$
		\For{$r=0,\ldots$}
		\vspace{0.2cm}
		
		 \textbf{E-Step:} Compute the weights
		\begin{align*} 	
		\delta_{i,r} &=  (x_i-\mu_r)^\tT \Sigma_r^{-1} (x_i-\mu_r)\\
		\gamma_{i,r} &=    \frac{\nu_r + d}{ \nu_r + \delta_{i,r} }
		\end{align*}
		
		\hspace*{0.2cm} \textbf{CM1-Step:} Update the parameters
		\begin{align*}
		\mu_{r+1}    
		&=   
		\frac{ \sum\limits_{i=1}^{n} w_i \gamma_{i,r} x_i}{ \sum\limits_{i=1}^{n} w_i\gamma_{i,r} } 
		\\
		\Sigma_{r+1} 
		&=   
		\sum\limits_{i=1}^{n} w_i \gamma_{i,r} (x_i-\mu_{r+1})(x_i-\mu_{r+1})^\tT 
		\end{align*}
		
        \hspace*{0.2cm} \textbf{CM2-Step:} Update the parameter
        \begin{align*}
		\nu_{r+1}&=  \; \text{ zero of } 
		 \phi\left( \frac{\nu}{2} \right) 
			-\phi\left( \frac{\nu +d}{2} \right)
			+ \sum_{i=1}^n w_i \left( \frac{\nu + d}{\nu +  \delta_{i,r+1}}  - \log\left( \frac{\nu + d}{\nu +  \delta_{i,r+1}} \right) - 1 \right)	 
		\end{align*}
		\EndFor
	\end{algorithmic}
\end{algorithm}

Next, we consider two acceleration schemes of arbitrary fixed point algorithms $\vartheta_{r+1}=G(\vartheta_r)$. In our case $\vartheta\in\R^p$ is given by $(\nu,\mu,\Sigma)$ and $G$ is given by one step of Algorithm \ref{alg:EM}, \ref{alg:aEM}, \ref{alg:MMF}, \ref{alg:GMMF} or \ref{alg:ECME}.

\paragraph{SQUAREM Acceleration:}

The first acceleration scheme, called squared iterative methods (SQUAREM) was proposed in \cite{VR2008}. 
The idea of SQUAREM is to update the parameters $\vartheta_r=(\nu_r,\mu_r,\Sigma_r)$ in the following way:
we compute $\vartheta_{r,1}=G(\vartheta_r)$ and $\vartheta_{r,2}=G(\vartheta_{r,1})$. 
Then, we calculate $s=\vartheta_{r,1}-\vartheta_r$ and $v=(\vartheta_{r,2}-\vartheta_{r,1})-s$. 
Now we set $\vartheta'=\vartheta_r-2\alpha r+\alpha^2 v$ and define the update $\vartheta_{r+1}=G(\vartheta')$, where $\alpha$ is chosen as follows. 
First, we set $\alpha=\min(-\tfrac{\|r\|_2}{\|v\|_2},-1)$. Then we compute $\vartheta'$ as described before. 
If $L(\vartheta')<L(\vartheta_r)$, we keep our choice of $\alpha$. 
Otherwise we update $\alpha$ by $\alpha=\tfrac{\alpha-1}{2}$.
Note that this scheme terminates as long a $\vartheta_r$ is not a critical point of $L$ by the following argument:
it holds that $\vartheta_r+2r+v=\vartheta_{r,2}$, 
which implies that it holds that $\lim_{\alpha\to-1}L(\vartheta_r-2\alpha+\alpha^2v)=L(\vartheta_{r,2})\leq L(\vartheta_r)$ 
with equality if and only if $\vartheta_r$ is a critical point of $L$, 
since all our algorithms have the property that $L(\vartheta)\geq L(G(\vartheta))$ with equality if and only if $\vartheta$ is a critical point of $L$.
By construction this scheme ensures that the negative log-likelihood values of the iterates is decreasing.

\paragraph{Damped Anderson Acceleration with Restarts and $\epsilon$-Monotonicity (DAAREM):}

The DAAREM acceleration was proposed in \cite{HV2019}. 
It is based on the Anderson acceleration, which was introduced in \cite{A1965}.
As for the SQUAREM acceleration want to solve 
the fixed point equation $\vartheta=G(\vartheta)$ with $\vartheta=(\nu,\mu,\Sigma)$ using the iteration $\vartheta_{r+1}=G(\vartheta_r)$. 
We also use the equivalent formulation to solve $f(\vartheta)=0$, where $f(\vartheta)=G(\vartheta)-\vartheta$. 
For a fixed parameter $m\in\N_{>0}$, we define $m_r=\min(m,r)$. 
Then, one update of $\vartheta_r$ using the Anderson Acceleration is given by 
\begin{align}
\vartheta_{r+1}=&G(\vartheta_r)-\sum_{j=1}^{m_r} (G(\vartheta_{r-m_r+j})-G(\vartheta_{r-m_r+j-1}))\gamma_j^{(r)}\label{eq:AA_update}\\
=&\vartheta_r+f(\vartheta_r)-\sum_{j=1}^{m_r} ((\vartheta_{r-m_r+j}-\vartheta_{r-m_r+j-1})-(f(\vartheta_{r-m_r+j})-f(\vartheta_{r-m_r+j-1})))\gamma_j^{(r)},
\end{align}
with $\gamma^{(r)}=(\mathcal{F}_r^\tT\mathcal{F}_r)^{-1}\mathcal{F}_r^\tT f(\vartheta_r)$, 
where the columns of $\mathcal{F}_r\in\R^{p\times m_r}$ are given by $f(\vartheta_{r-m_r+j+1})-f(\vartheta_{r-m_r+j})$ for $j=0,...,m_r-1$. 
An equivalent formulation of update step \eqref{eq:AA_update} is given by
\begin{align}
\vartheta_{r+1}=\vartheta_r+f(\vartheta_r)-(\mathcal{X}_r+\mathcal{F}_r)\gamma^{(r)},
\end{align}
where the columns of $\mathcal{X}_r\in\R^{p\times m_r}$ are given by $\vartheta_{r-m_r+j+1}-\vartheta_{r-m_r+j}$ for $j=0,...,m_r-1$.
The Anderson acceleration can be viewed as a special case of a multisecant quasi-Newton procedure to solve $f(\vartheta)=0$. For more details we refer to \cite{FS2009, HV2019}.\\

The DAAREM acceleration modifies the Anderson acceleration in three points. 
The first modification is to restart the algorithm after $m$ steps. 
That is, to set $m_r=\min(m,c_r)$ instead of $m_r=\min(m,r)$, where $c_r\in\{1,...,m\}$ is defined by $c_r=r\,\mathrm{mod}\,m$. 
The second modification is to add damping term in the computation coefficients $\gamma^{(r)}$. 
This means, that $\gamma^{(r)}$ is given by $\gamma^{(r)}=(\mathcal{F}_r^\tT\mathcal{F}_r+\lambda_r I)^{-1}\mathcal{F}_r^\tT f(\vartheta_r)$ instead of $\gamma^{(r)}=(\mathcal{F}_r^\tT\mathcal{F})^{-1}\mathcal{F}_r^\tT f(\vartheta_r)$. 
The parameter $\lambda_r$ is chosen such that
\begin{align}
\|(\mathcal{F}_r^\tT\mathcal{F}_r+\lambda_r I)^{-1}\mathcal{F}_r^\tT f(\vartheta_r)\|_2^2=\delta_r\|(\mathcal{F}_r^\tT\mathcal{F}_r)^{-1}\mathcal{F}_r^\tT f(\vartheta_r)\|_2^2\label{eq:DAAREM_lambda}
\end{align}
for some damping parameters $\delta_r$. We initialize the $\delta_r$ by $\delta_1=\tfrac1{1+\alpha^{\kappa}}$ and decrease the exponent of $\alpha$ in each step by $1$ up to a minimum of $\kappa-D$ for some parameter $D\in\N_{>0}$.
The third modification is to enforce that for the negative log-likelihood function $L$ does not increase more than $\epsilon$ in one iteration step. 
To do this, we compute the update $\vartheta_{r+1}$ using the Anderson acceleration. If $L(\vartheta_{r+1})>L(\vartheta_r)+\epsilon$, we use our original fixed point algorithm in this step, i.e.\ we set $\vartheta_{r+1}=G(\vartheta_r)$.

We summarize the DAAREM acceleration in Algorithm \ref{alg:DAAREM}. In our numerical experiments we use for the parameters the values suggested by \cite{HV2019}, that is $\epsilon=0.01$, $\epsilon_c=0$, $\alpha=1.2$, $\kappa=25$, $D=2\kappa$ and $m=\min(\lceil\tfrac{p}2\rceil,10)$, where $p$ is the number of parameters in $\vartheta$. 

\begin{algorithm}[!ht]
	\caption{DAAREM acceleration} \label{alg:DAAREM}
	\begin{algorithmic}
    \State \textbf{Input:} Parameters $\epsilon\geq0$, $\epsilon_c\geq0$, $\alpha>1$, $\kappa\geq0$, $D\geq0$, $m\geq1$
	\State \textbf{Initialization:}  Initialize $\vartheta_0=(\nu_0,\mu_0,\Sigma_0)$ as in the corresponding fixed point algorithm.
    \State Set $c_1=1$, $s_1=0$, $\vartheta_1=\vartheta_0+f(\vartheta_0)$, $L^*=L(x_1)$.
    \For{r=1,2,...}
    \State Set $m_r=\min(m,c_r)$, $\delta_r=\tfrac1{1+\alpha^{\kappa-s_r}}$ and compute $f_r=f(\vartheta_r)$.
    \State Define the columns of $\mathcal{F}_r,\mathcal{X}_r\in\R^{p\times m_k}$ by $f_{r-m_r+j+1}-f_{r-m_r+j}$ and $\vartheta_{r-m_r+j+1}-\vartheta_{r-m_r+j}$ respectively, $j=0,...,m_r-1$.
    \State Define $\lambda_r$ by \eqref{eq:DAAREM_lambda} and set $\gamma^{(r)}=(\mathcal{F}_r^\tT\mathcal{F}_r+\lambda_r I)^{-1}\mathcal{F}_r^\tT f_r$.
    \State Set $t_{r+1}=\vartheta_r+f_r-(\mathcal{X}_r+\mathcal{F_r})\gamma^{(r)}$
    \If{$L(t_{r+1})\leq L(\vartheta_r)+\epsilon$}
    \State Set $\vartheta_{r+1}=t_{r+1}$ and $s_\text{new}=s_r+1$.
    \Else
    \State Set $\vartheta_{r+1}=\vartheta_r+f_r$ and $s_\text{new}=s_r$.
    \EndIf
    \If{$k\,\mathrm{mod}\,m=0$}
    \If{$L(\vartheta_{r+1})> L^*+\epsilon_c$}
    \State Set $s_\text{new}=\max\{s_\text{new}-m,-D\}$
    \EndIf
    \State Set $c_{k+1}=1$ and $L*=L(\vartheta_{k+1})$.
    \Else
    \State Set $c_{r+1}=c_r+1$.
    \EndIf
    \EndFor
    \end{algorithmic}
\end{algorithm}

\paragraph{Simulation Study:}

To compare the performance of all of these algorithms we perform again a Monte Carlo simulation. As in the previous section we draw $n=100$ i.i.d.~realizations of $T_\nu(\mu,\Sigma)$ with $\mu=0$, $\Sigma=0.1\,\mathrm{Id}$ and $\nu\in\{1,2,5,10,100\}$. Then, we use each of the Algorithms \ref{alg:EM}, \ref{alg:aEM}, \ref{alg:MMF}, \ref{alg:GMMF} and \ref{alg:ECME} to compute the ML-estimator $(\hat\nu,\hat\mu,\hat\Sigma)$. We use each of these algorithms with no acceleration, with SQUAREM acceleration and with DAAREM acceleration.\\
We use the same initialization and stopping criteria as in the previous section and repeat this experiment $N=1.000$ times. To quantify the performance of the algorithms, we count the number of iterations and measure the execution time. The results are given in Table \ref{tab:performance2}.

We observe that for nearly any choice of the parameters the performance of the GMMF is better than the performance of the ECME. For small $\nu$, the performance of the SQUAREM-aEM is also very good. On the other hand, for large $\nu$ the SQUAREM-GMMF behaves very well. Further, for any choice of $\nu$ the performance of the SQUAREM-MMF is close to the best algorithm.

\begin{table}[htp]
\begin{center}
\resizebox*{7cm}{!}{
\begin{sideways}
\begin{tabular}{c|c c c c c}
Algorithm&$\nu=1$&$\nu=2$&$\nu=5$&$\nu=10$&$\nu=100$\\\hline
EM&$62.24\pm2.47$&$46.20\pm1.84$&$50.14\pm11.01$&$122.45\pm30.81$&$530.72\pm\n89.11$\\
aEM&$23.39\pm0.75$&$26.46\pm1.08$&$49.60\pm\n7.55$&$117.21\pm30.74$&$527.77\pm\n89.92$\\
MMF&$22.13\pm0.73$&$21.51\pm0.96$&$25.12\pm\n2.63$&$\n38.17\pm\n4.47$&$\n53.98\pm\n\n7.06$\\
GMMF&$20.56\pm0.67$&$17.79\pm0.79$&$12.06\pm\n1.73$&$\n14.35\pm\n0.97$&$\n10.86\pm\n\n2.10$\\
ECME&$60.81\pm2.41$&$40.73\pm1.97$&$29.07\pm\n1.81$&$\n22.12\pm\n3.81$&$\n12.81\pm\n\n2.96$\\\hline
DAAREM-EM&$22.09\pm4.05$&$22.26\pm4.59$&$20.39\pm\n5.42$&$\n24.72\pm\n6.34$&$\n28.09\pm\n\n6.93$\\
DAAREM-aEM&$15.52\pm1.57$&$14.90\pm2.39$&$15.35\pm\n3.22$&$\n17.84\pm\n4.41$&$\n20.07\pm\n\n3.68$\\
DAAREM-MMF&$15.16\pm1.45$&$14.02\pm2.09$&$13.12\pm\n2.09$&$\n14.99\pm\n3.62$&$\n66.86\pm630.74$\\
DAAREM-GMMF&$14.11\pm1.04$&$12.81\pm1.46$&$\n9.61\pm\n1.27$&$\n\n9.84\pm\n1.46$&$\n10.15\pm\n\n2.10$\\
DAAREM-ECME&$22.69\pm4.71$&$19.15\pm3.50$&$17.06\pm\n3.33$&$\n16.89\pm\n3.75$&$\n12.35\pm\n\n3.90$\\\hline
SQUAREM-EM&$26.36\pm2.25$&$21.77\pm4.56$&$21.43\pm\n3.13$&$\n46.01\pm10.72$&$111.24\pm\n40.47$\\
SQUAREM-aEM&$15.32\pm0.98$&$14.86\pm0.86$&$22.87\pm\n2.26$&$\n43.57\pm\n8.29$&$\n38.56\pm\n35.35$\\
SQUAREM-MMF&$15.47\pm1.09$&$14.05\pm1.40$&$14.18\pm\n1.56$&$\n18.40\pm\n1.21$&$\n22.41\pm\n\n9.39$\\
SQUAREM-GMMF&$\mathbf{13.30\pm1.49}$&$\mathbf{11.99\pm0.16}$&$\mathbf{\n9.02\pm\n0.49}$&$\mathbf{\n\n8.90\pm\n0.80}$&$\mathbf{\n\n8.28\pm\n\n1.29}$\\
SQUAREM-ECME&$24.25\pm2.79$&$19.20\pm1.96$&$18.48\pm\n3.12$&$\n17.98\pm\n3.33$&$\n13.41\pm\n\n3.41$
\end{tabular}
\end{sideways}}\quad
\resizebox*{7cm}{!}{
\begin{sideways}
\begin{tabular}{c|c c c c c}
Algorithm&$\nu=1$&$\nu=2$&$\nu=5$&$\nu=10$&$\nu=100$\\\hline
EM&$0.00890\pm0.00163$&$0.00644\pm0.00074$&$0.00682\pm0.00158$&$0.01659\pm0.00432$&$0.07076\pm0.01350$\\
aEM&$0.00365\pm0.00056$&$0.00401\pm0.00049$&$0.00732\pm0.00128$&$0.01706\pm0.00465$&$0.07513\pm0.01416$\\
MMF&$0.00369\pm0.00075$&$0.00342\pm0.00039$&$0.00390\pm0.00052$&$0.00589\pm0.00085$&$0.00834\pm0.00151$\\
GMMF&$0.00763\pm0.00193$&$0.00540\pm0.00061$&$0.00355\pm0.00074$&$0.00551\pm0.00063$&$0.00599\pm0.00112$\\
ECME&$0.01998\pm0.00343$&$0.01214\pm0.00137$&$0.00927\pm0.00114$&$0.00801\pm0.00105$&$0.00684\pm0.00157$\\\hline
DAAREM-EM&$0.00728\pm0.00163$&$0.00726\pm0.00158$&$0.00652\pm0.00180$&$0.00796\pm0.00218$&$0.00905\pm0.00233$\\
DAAREM-aEM&$0.00554\pm0.00095$&$0.00519\pm0.00097$&$0.00530\pm0.00124$&$0.00613\pm0.00160$&$0.00687\pm0.00141$\\
DAAREM-MMF&$0.00553\pm0.00090$&$0.00500\pm0.00084$&$0.00463\pm0.00082$&$0.00529\pm0.00137$&$0.02410\pm0.22518$\\
DAAREM-GMMF&$0.00837\pm0.00185$&$0.00679\pm0.00091$&$0.00491\pm0.00081$&$0.00601\pm0.00086$&$0.00772\pm0.00201$\\
DAAREM-ECME&$0.01527\pm0.00351$&$0.01061\pm0.00175$&$0.00968\pm0.00171$&$0.00993\pm0.00189$&$0.00825\pm0.00207$\\\hline
SQUAREM-EM&$0.00456\pm0.00081$&$0.00372\pm0.00077$&$0.00375\pm0.00068$&$0.00831\pm0.00220$&$0.02299\pm0.00837$\\
SQUAREM-aEM&$\mathbf{0.00291\pm0.00050}$&$0.00269\pm0.00029$&$0.00441\pm0.00065$&$0.00913\pm0.00203$&$0.00795\pm0.00621$\\
SQUAREM-MMF&$0.00308\pm0.00059$&$\mathbf{0.00268\pm0.00035}$&$\mathbf{0.00270\pm0.00041}$&$\mathbf{0.00373\pm0.00041}$&$0.00474\pm0.00184$\\
SQUAREM-GMMF&$0.00569\pm0.00129$&$0.00400\pm0.00040$&$0.00304\pm0.00042$&$0.00375\pm0.00046$&$\mathbf{0.00420\pm0.00080}$\\
SQUAREM-ECME&$0.01153\pm0.00222$&$0.00722\pm0.00086$&$0.00717\pm0.00112$&$0.00761\pm0.00090$&$0.00727\pm0.00182$
\end{tabular}
\end{sideways}}
\end{center}
\caption{Average number of iterations (top) and execution times (bottom) and the corresponding standard deviations of the different algorithms.}
\label{tab:performance2}
\end{table}

\subsection{Unsupervised Estimation of Noise Parameters} \label{sec:images}
Next, we provide an application in image analysis. To this aim, we consider images corrupted by one-dimensional Student-$t$ noise
with $\mu=0$ and unknown $\Sigma \equiv \sigma^2$ and $\nu$.
We provide a method 
that allows to estimate $\nu$ and $\sigma$ in an unsupervised way. 
The basic idea is to consider constant areas of an image, 
where the signal to noise ratio is weak and differences between pixel values 
are solely caused by the noise. 

\paragraph{Constant area detection:}
In order to detect constant regions in an image, we adopt an idea presented in~\cite{SDA15}. 
It is based on Kendall's $\tau$-coefficient, which is a measure of rank correlation, 
and the associated $z$-score, see~\cite{Ken38,Ken45}.
In the following, we briefly summarize the main ideas behind this approach.
 For finding  constant regions we proceed as follows: First, the image grid $\GG$ is partitioned into $K$ small, 
non-overlapping regions $\GG= \bigcup_{k=1}^K R_k$, and for each region we consider the hypothesis testing problem 
\begin{align}
\HH_0&\colon R_k\text{ is constant}\qquad \text{vs.}\qquad 
\HH_1\colon R_k\text{ is not constant}	\label{constant_test}.
\end{align}
To decide whether to reject $\HH_0$ or not, we observe the following: Consider a fixed region $R_k$ 
and let $I, J\subseteq R_k$ be two disjoint subsets of $R_k$ with the same cardinality. Denote with $u_I$ and $u_J$ the vectors 
containing the values of $u$ at the positions indexed by $I$ and $J$. Then, under $\HH_0$, the vectors $u_I$ and $u_J$ 
are uncorrelated (in fact even independent) for all choices of $I, J\subseteq R_k$ with $I\cap J = \emptyset$ and $|I|=|J|$. 
As a consequence, the rejection of $\HH_0$ can be reformulated as the question whether we can find  $I,J$ such that $u_I$ and $u_J$ 
are significantly correlated, since in this case there has to be some structure in the image region $R_k$ 
and it cannot be constant. Now, in order to quantify  the correlation, we adopt an idea presented in~\cite{SDA15} and make use of Kendall's $\tau$-coefficient, 
which is a measure of rank correlation, and the associated $z$-score, see~\cite{Ken38,Ken45}. 
The key
idea is to focus on the rank (i.e., on the relative order) of the values
rather than on the values themselves. In this vein, a block is considered homogeneous if
the ranking of the pixel values is uniformly distributed, regardless of the spatial arrangement of the pixels.  
In the following, we assume that we have extracted two disjoint subsequences $x = u_I$ and $y = u_J$ 
from a region $R_k$ with $I$ and $J$ as above.
Let $(x_i,y_i)$ and $(x_j,y_j)$ be two pairs of observations. Then, the pairs are said to be 
\begin{equation*}
\begin{cases}
\text{concordant} & \text{if } x_i<x_j \text{ and } y_i<y_j\\& \text{or } 
x_i>x_j \text{ and } y_i>y_j,\\
\text{discordant} & \text{if } x_i<x_j \text{ and } y_i>y_j\\& \text{or } x_i>x_j \text{ and } y_i<y_j,\\
\text{tied} & \text{if } x_i=x_j  \text{ or } y_i=y_j.
\end{cases}
\end{equation*}	 
Next, let $x,y\in \R^n$ be two sequences without tied pairs and let $n_c$ and $n_d$ be the number of concordant and discordant pairs, respectively. 
Then, \emph{Kendall's $\tau$ coefficient}~\cite{Ken38}	is defined as $\tau\colon \R^n\times \R^n\to [-1,1]$, 
\begin{equation*}
\tau(x,y) = \frac{n_c - n_d}{\frac{n(n-1)}{2}}. 
\end{equation*}
From this definition we see that if the agreement between the two rankings is perfect, i.e.\ the two rankings are the same, 
then the coefficient attains its maximal value 1. On the other extreme, if the disagreement between the two rankings is perfect, 
that is, one ranking is the reverse of the other, then the coefficient has value -1.
If the sequences $x$ and $y$ are uncorrelated,  we expect the coefficient to be approximately zero. Denoting with $X$ and $Y$ 
the underlying random variables that generated the sequences $x$ and $y$, we have the following result, 
whose proof can be found in~\cite{Ken38}. 

\begin{Theorem}\label{Theo:tau_asymptotic}
	Let $X$ and $Y$ be two arbitrary sequences under $\HH_0$ without tied pairs.
	Then, 	the random variable $\tau(X,Y)$ has an expected value of 0 and a variance of $\frac{2(2n+5)}{9n(n-1)}$. 
	Moreover, for $n\to \infty$, the associated \emph{$z$-score} $z\colon \R^n\times \R^n\to \R$,
	\begin{align*}
	z(x,y) = \frac{3\sqrt{n(n-1)}}{\sqrt{2(2n+5)}}\tau(x,y)=\frac{3\sqrt{2}(n_c - n_d)}{\sqrt{n(n-1)(2n+5)}} 
	\end{align*}
	is asymptotically standard normal distributed,
	\begin{equation*}
	z(X,Y)\overset{n\to \infty}{\sim}\NN(0,1).
	\end{equation*}
\end{Theorem}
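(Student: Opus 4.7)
The plan is to write $n_c - n_d$ as a sum of signs over pairs and recognize Kendall's $\tau$ as a U-statistic of order $2$, so the mean and variance follow from a combinatorial bookkeeping and the limit law from Hoeffding's central limit theorem for U-statistics. Throughout I write $Q_{ij} \coloneqq \sign((X_i - X_j)(Y_i - Y_j))$ for $i < j$, so that $n_c - n_d = \sum_{i<j} Q_{ij}$ and $\tau(X,Y) = \frac{2}{n(n-1)} \sum_{i<j} Q_{ij}$. Under $\HH_0$ the sequences $X$ and $Y$ are independent and each has i.i.d.\ continuous entries, which I will use repeatedly via exchangeability arguments.

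For the mean, I would note that swapping the pair $(X_i, Y_i) \leftrightarrow (X_j, Y_j)$ leaves the joint distribution invariant while sending $Q_{ij} \mapsto Q_{ij}$ (it is a symmetric function), but the alternative swap $X_i \leftrightarrow X_j$ alone preserves the distribution (by independence of $X$ from $Y$) and sends $Q_{ij} \mapsto -Q_{ij}$. Hence $\E[Q_{ij}] = 0$ and $\E[\tau] = 0$. For the variance, I expand
\begin{equation*}
\Var(n_c - n_d) = \sum_{i<j} \sum_{k<l} \E[Q_{ij} Q_{kl}]
\end{equation*}
and split the double sum by how many indices are shared. When $\{i,j\} = \{k,l\}$, $Q_{ij}^2 = 1$, giving $\binom{n}{2}$ contributions of $1$. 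When $\{i,j\} \cap \{k,l\} = \emptyset$, independence of the four underlying variables together with $\E[Q] = 0$ makes the term vanish. When exactly one index is shared, say $k = i$ and $l \neq j$, independence of $X$ from $Y$ factorizes $\E[Q_{ij}Q_{il}] = \E[\sign((X_i - X_j)(X_i - X_l))] \cdot \E[\sign((Y_i - Y_j)(Y_i - Y_l))]$, and each factor equals $\tfrac13$ because $X_i$ is equally likely to be the minimum, median or maximum of three i.i.d.\ continuous variables (so the sign is $+1$ with probability $\tfrac23$ and $-1$ with probability $\tfrac13$). Counting $n(n-1)(n-2)$ such ordered pairs $(\{i,j\},\{i,l\})$ in the double sum gives
\begin{equation*}
\Var(n_c - n_d) = \binom{n}{2} + \frac{n(n-1)(n-2)}{9} = \frac{n(n-1)(2n+5)}{18},
\end{equation*}
and dividing by $\bigl(n(n-1)/2\bigr)^2$ yields $\Var(\tau) = \frac{2(2n+5)}{9n(n-1)}$.

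For asymptotic normality, I would recognize $\tau$ as a U-statistic with symmetric kernel $h\bigl((x_1,y_1),(x_2,y_2)\bigr) = \sign((x_1-x_2)(y_1-y_2))$ acting on the i.i.d.\ pairs $(X_i, Y_i)$ under $\HH_0$. The Hoeffding projection
\begin{equation*}
h_1(x,y) \coloneqq \E\bigl[h\bigl((x,y),(X_2,Y_2)\bigr)\bigr] = (2F(x) - 1)(2G(y) - 1),
\end{equation*}
where $F,G$ are the marginal CDFs, has mean zero and variance $\sigma_1^2 = \tfrac{1}{9}$, using $\Var(2F(X) - 1) = \tfrac13$ (since $F(X)$ is uniform on $[0,1]$) and independence of $X,Y$. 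Because $\sigma_1^2 > 0$, Hoeffding's CLT for U-statistics gives $\sqrt{n}\,\tau \to \mathcal{N}(0, 4\sigma_1^2) = \mathcal{N}(0, \tfrac{4}{9})$. Since the scaling factor in the $z$-score satisfies $\frac{3\sqrt{n(n-1)}}{\sqrt{2(2n+5)}} \sim \frac{3}{2}\sqrt{n}$ as $n \to \infty$, Slutsky's lemma yields $z(X,Y) \to \mathcal{N}(0, \tfrac{9}{4} \cdot \tfrac{4}{9}) = \mathcal{N}(0,1)$.

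The main obstacle is the covariance-counting step: one has to be careful to distinguish the three cases of index overlap and to correctly enumerate ordered versus unordered edge pairs, since an off-by-a-constant in this count propagates directly into the final $(2n+5)$ factor. The CLT step, by contrast, is a clean citation of Hoeffding's theorem once non-degeneracy $\sigma_1^2 > 0$ is verified, and the rescaling to $\mathcal{N}(0,1)$ is routine asymptotic bookkeeping.
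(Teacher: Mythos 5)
Your proof is correct, but note that the paper does not actually prove this theorem: it is stated as a classical result with the proof deferred to Kendall's original 1938 paper \cite{Ken38}. So there is nothing in the paper to match your argument against line by line; what you have written is a self-contained modern proof where the paper offers only a citation. Your route --- writing $n_c-n_d=\sum_{i<j}Q_{ij}$ with $Q_{ij}=\sign((X_i-X_j)(Y_i-Y_j))$, getting $\E[Q_{ij}]=0$ from the sign-flip symmetry $X_i\leftrightarrow X_j$, splitting the covariance sum by index overlap (the $\tfrac13\cdot\tfrac13$ value for a single shared index and the count $n(n-1)(n-2)$ of ordered edge pairs are both right, and they assemble to $\tfrac{n(n-1)(2n+5)}{18}$ as required), and then invoking Hoeffding's CLT for the non-degenerate U-statistic with projection $h_1(x,y)=(2F(x)-1)(2G(y)-1)$ and $\sigma_1^2=\tfrac19$ --- is the standard textbook treatment and is sound, including the final Slutsky step matching the $z$-score normalization $\tfrac{3\sqrt{n(n-1)}}{\sqrt{2(2n+5)}}\sim\tfrac32\sqrt{n}$ to the limit $\NN(0,\tfrac49)$ of $\sqrt{n}\,\tau$. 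Kendall's own derivation predates Hoeffding and instead computes the moments of the permutation distribution of $n_c-n_d$ directly (via recursions for the frequency distribution) and shows they converge to Gaussian moments; your U-statistic argument buys a shorter, more conceptual proof at the cost of citing Hoeffding's theorem, while the paper's choice to cite rather than prove reflects that the result is peripheral to its main contributions. The only point worth making explicit if you wanted this to stand as a complete proof is that under $\HH_0$ the two disjoint pixel subsequences are independent with i.i.d.\ continuous (hence a.s.\ tie-free) entries, which is exactly what licenses every exchangeability and factorization step you use.
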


With slight adaption, Kendall's $\tau$ coefficient can be generalized to sequences with tied pairs, see~\cite{Ken45}. 
As a consequence of Theorem~\ref{Theo:tau_asymptotic}, for a given significance level $\alpha\in (0,1)$, 
we can use the quantiles of the standard normal distribution to decide whether to reject $\HH_0$ or not. 
In practice, we cannot test any kind of region and any kind of disjoint sequences. 
As in~\cite{SDA15}, we restrict our attention to quadratic regions and pairwise comparisons of neighboring pixels. 
We use four kinds of neighboring relations (horizontal, 
vertical and two diagonal neighbors) thus perform in total four tests. We reject the hypothesis $\HH_0$ 
that the region is constant as soon as one of the four tests rejects it. 
Note that by doing so, the final significance level is smaller than the initially chosen one. 
We start with blocks of size $64\times 64$  
whose side-length is incrementally decreased until enough constant areas are found. 
\\

\paragraph{Parameter estimation.}
In each constant region we consider the pixel values in the region as i.i.d.\ 
samples of a univariate Student-$t$ distribution $T_\nu(\mu,\sigma^2)$,
where we estimate the  parameters  using  Algorithm~\ref{alg:MMF}. 

After estimating the parameters in each found constant region, 
the estimated location parameters  $\mu$   are discarded, 
while the estimated scale and degrees of freedom parameters $\sigma$ respective $\nu$  
are averaged to obtain the final estimate of the global noise parameters. 
At this point, as both $\nu$ and $\sigma$ influence the resulting distribution in a multiplicative way,  
instead of an  arithmetic mean, one might use a geometric which  is slightly less affected by outliers.

In Figure~\ref{Fig:constant_area} we illustrate this procedure for two different noise scenarios. 
The left column in each figure depicts  the detected constant areas. 
The middle and right column show histograms of the estimated values for $\nu$ respective $\sigma$.
For the constant area detection we use the code of~\cite{SDA15}\footnote{\url{https://github.com/csutour/RNLF}}.
The  true parameters used to generate the noisy images where $\nu=1$ and $\sigma = 10$ for the top row and $\nu=5$ and $\sigma = 10$ 
for the bottom row, while the obtained estimates are (geometric mean in brackets)
$\hat{\nu} = 1.0437$ ($1.0291$) 
and
$\hat{\sigma}= 10.3845$ ($10.3111$) for the top  row and $\hat{\nu}= 5.4140$ ($5.0423$) and $\hat{\sigma}=10.5500$ ($10.1897$) for the bottom row.

\begin{figure}[htp]
	\centering
	\begin{subfigure}[t]{0.32\textwidth}
		\centering
		\includegraphics[width=0.98\textwidth]{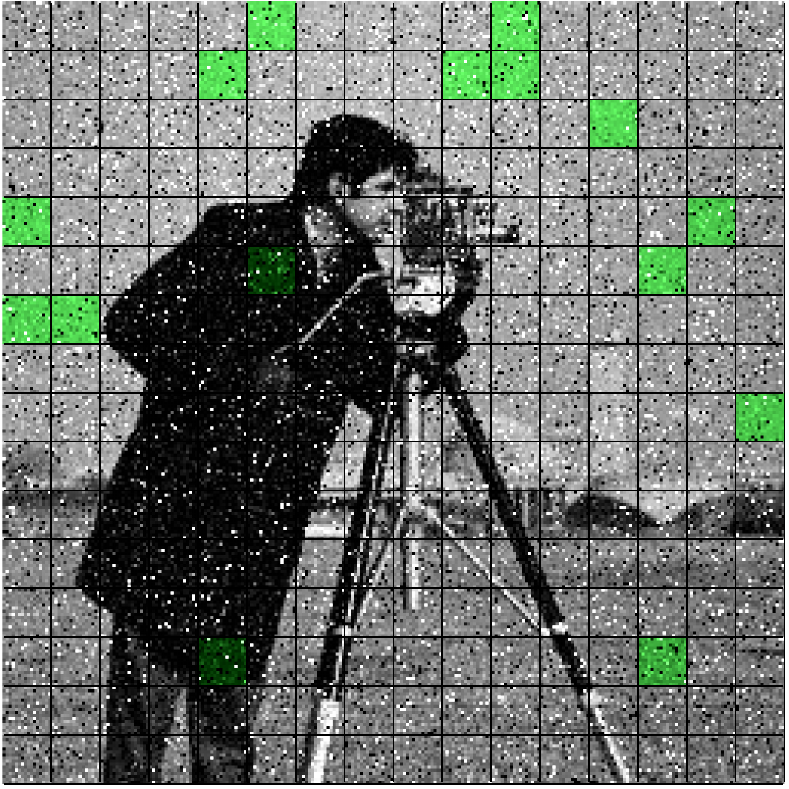}	
		\caption{Noisy image with detected homogeneous areas.}	
	\end{subfigure}
	\begin{subfigure}[t]{0.32\textwidth}
		\centering
		\includegraphics[width=\textwidth]{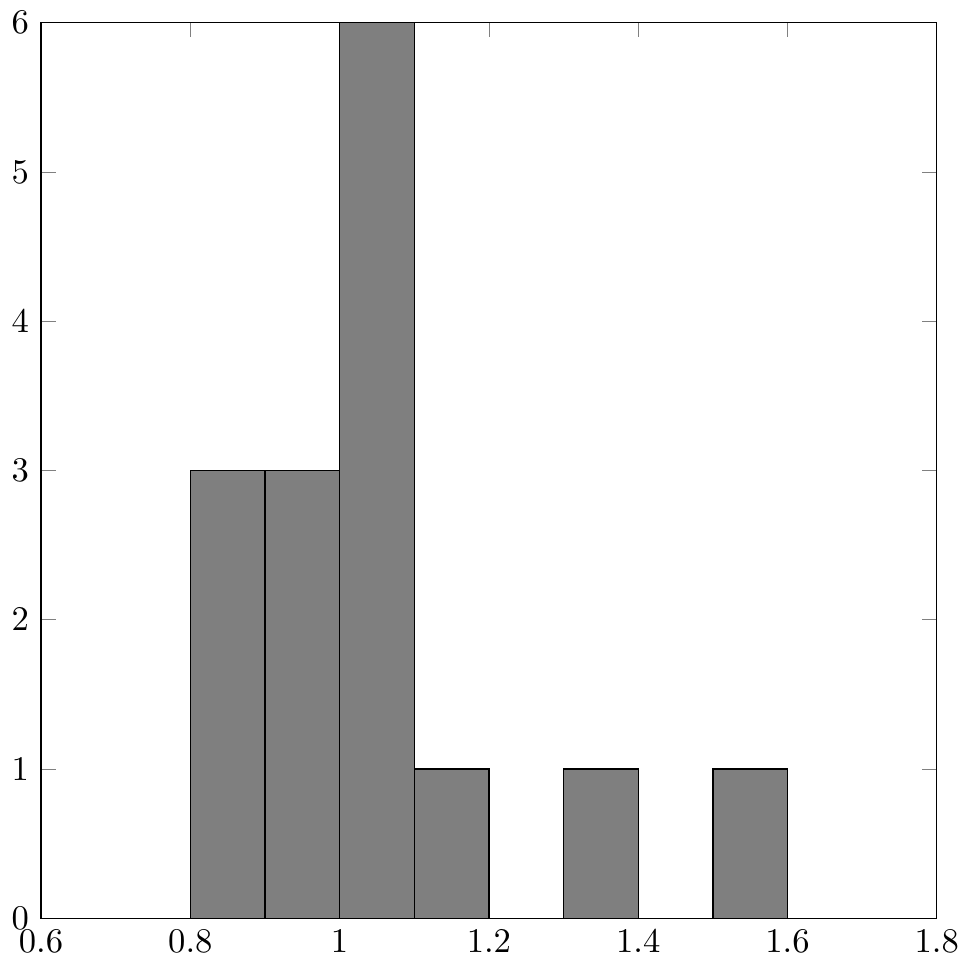}
		\caption{Histogram of estimates for $\nu$.}		
	\end{subfigure}
		\begin{subfigure}[t]{0.32\textwidth}
			\centering
			\includegraphics[width=\textwidth]{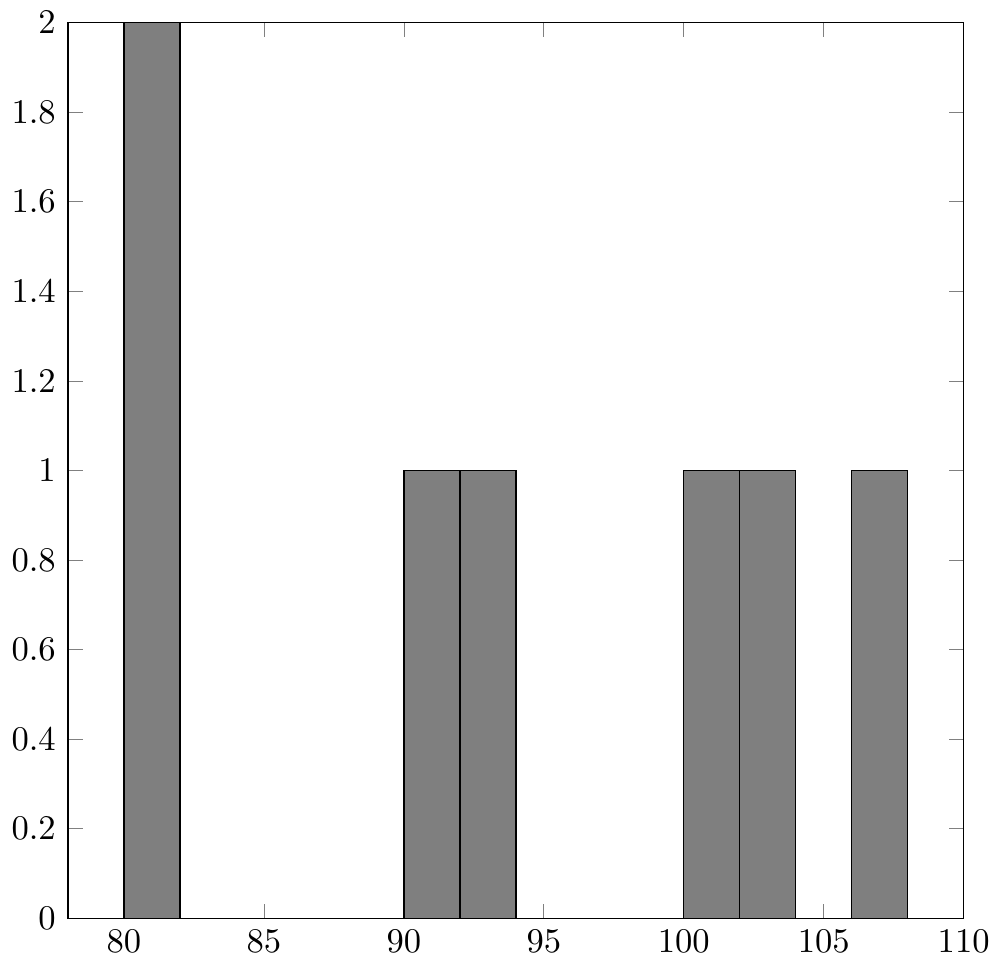}	
				\caption{Histogram of estimates for $\sigma^2$.}		
		\end{subfigure}
	
	\begin{subfigure}[t]{0.32\textwidth}
		\centering
		\includegraphics[width=0.98\textwidth]{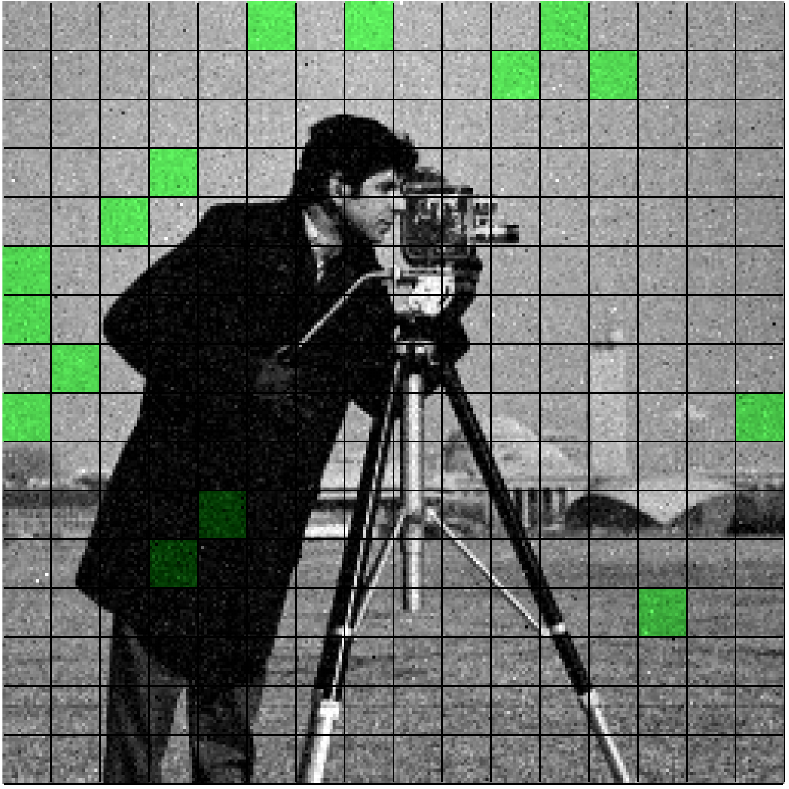}
			\caption{Noisy image with detected homogeneous areas.}	
	\end{subfigure}
	\begin{subfigure}[t]{0.32\textwidth}
		\centering
		\includegraphics[width=\textwidth]{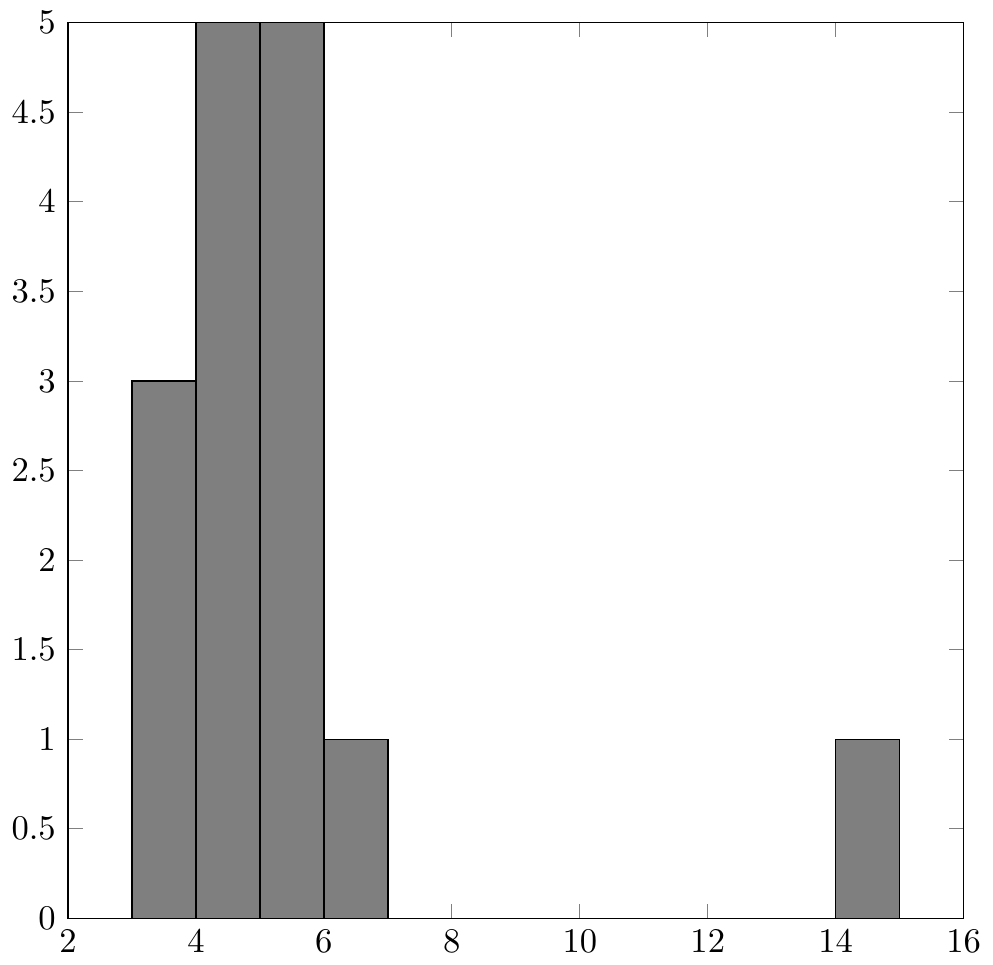}
		\caption{Histogram of estimates for $\nu$.}			
	\end{subfigure}
	\begin{subfigure}[t]{0.32\textwidth}
		\centering
		\includegraphics[width=\textwidth]{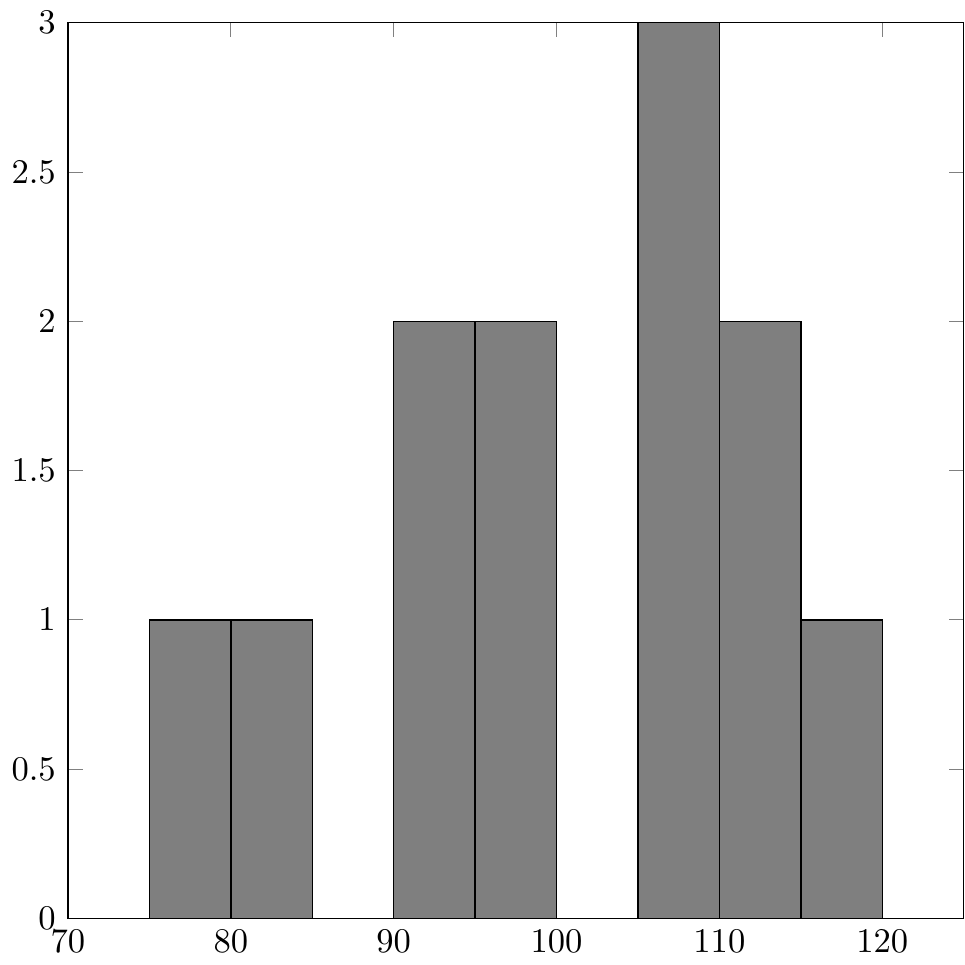}	
			\caption{Histogram of estimates for $\sigma^2$.}	
	\end{subfigure}
	\caption[]{Unsupervised estimation of the noise parameters $\nu$ and $\sigma^2$.  }\label{Fig:constant_area}
\end{figure}

A further example is given in Figure \ref{Fig:constant_area_muehle}. Here, the obtained estimates are (geometric mean in brackets)
$\hat{\nu} = 1.0075$ ($0.99799$) 
and
$\hat{\sigma}= 10.2969$ ($10.1508$) for the top  row and $\hat{\nu}= 5.4184$ ($5.1255$) and $\hat{\sigma}=10.2295$ ($10.1669$) for the bottom row.

\begin{figure}[htp]
	\centering
	\begin{subfigure}[t]{0.32\textwidth}
		\centering
		\includegraphics[height=4.7cm]{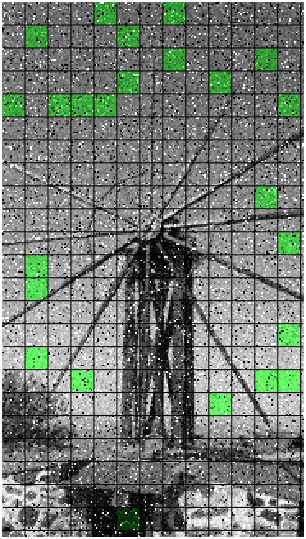}	
		\caption*{Noisy image with detected homogeneous areas.}	
	\end{subfigure}
	\begin{subfigure}[t]{0.32\textwidth}
		\centering
		\includegraphics[height=4.7cm]{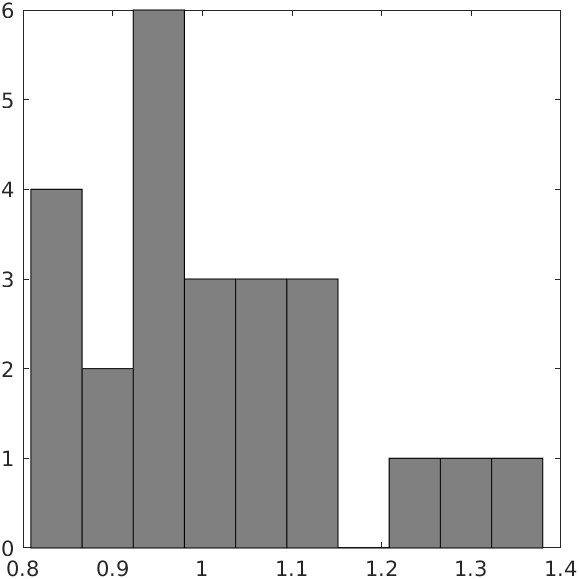}
		\caption*{Histogram of estimates for $\nu$.}		
	\end{subfigure}
		\begin{subfigure}[t]{0.32\textwidth}
			\centering
			\includegraphics[height=4.7cm]{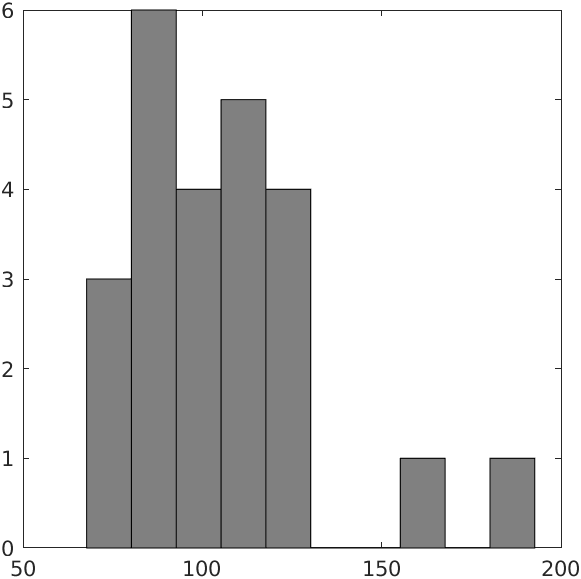}	
				\caption*{Histogram of estimates for $\sigma^2$.}		
		\end{subfigure}
	
	\begin{subfigure}[t]{0.32\textwidth}
		\centering
		\includegraphics[height=4.7cm]{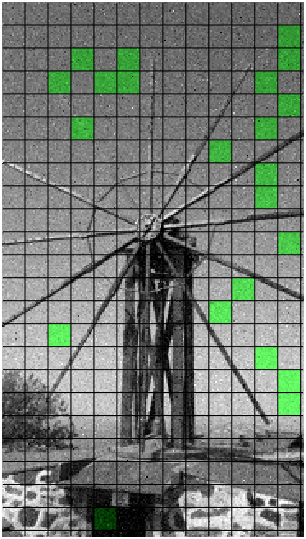}
			\caption*{Noisy image with detected homogeneous areas.}	
	\end{subfigure}
	\begin{subfigure}[t]{0.32\textwidth}
		\centering
		\includegraphics[height=4.7cm]{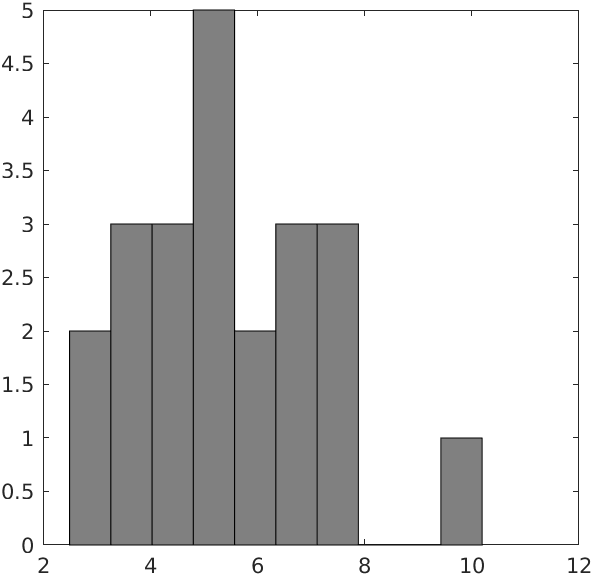}
		\caption*{Histogram of estimates for $\nu$.}			
	\end{subfigure}
	\begin{subfigure}[t]{0.32\textwidth}
		\centering
		\includegraphics[height=4.7cm]{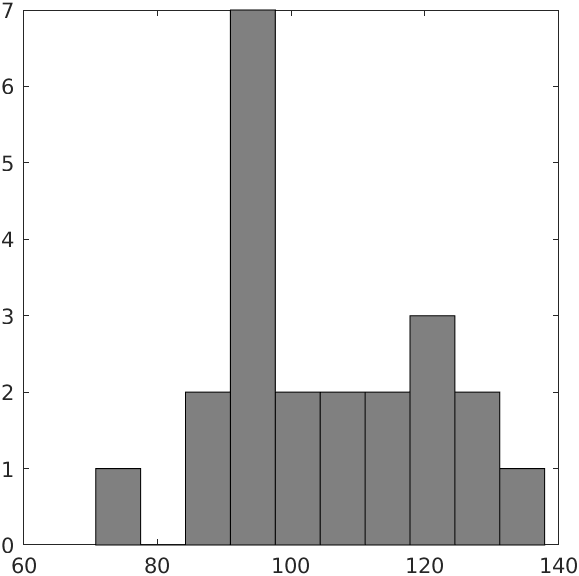}	
			\caption*{Histogram of estimates for $\sigma^2$.}	
	\end{subfigure}
	\caption[]{Unsupervised estimation of the noise parameters $\nu$ and $\sigma^2$.  }\label{Fig:constant_area_muehle}
\end{figure}

\appendix
\section{Auxiliary Lemmas}

\begin{Lemma}\label{lem:rike}
Let $x_i \in \mathbb R^d$, $i=1,\ldots,n$ and $w \in \mathring \Delta_n$ fulfill Assumption \ref{Ass:lin_ind}. 
Let $(\nu_r,\Sigma_r)_r$ be a sequence in $\R_{>0} \times \mathrm{SPD} (d)$ with $\nu_r \rightarrow 0$  as $r\rightarrow \infty$ (or if $\{\nu_r\}_r$ has a subsequence which converges to zero).
Then $(\nu_r,\Sigma_r)_r$ cannot be a minimizing sequence of $L(\nu,\Sigma)$.
\end{Lemma}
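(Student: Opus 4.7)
The plan is to prove the contrapositive by showing that $L(\nu_r,\Sigma_r) \to +\infty$ whenever $\nu_r \to 0$. Since $L$ takes a finite value at any fixed point such as $(1,I)$, this rules out $(\nu_r,\Sigma_r)$ being a minimizing sequence; the version with only a subsequence $\nu_{r_k}\to 0$ then follows by applying the same argument along that subsequence.

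First, I would split $L(\nu,\Sigma) = A(\nu) + B(\nu,\Sigma)$ with $A(\nu) \coloneqq 2\log\Gamma(\nu/2) - 2\log\Gamma((d+\nu)/2) - \nu\log\nu$ and $B(\nu,\Sigma) \coloneqq (d+\nu)\sum_i w_i \log(\nu + \delta_i) + \log|\Sigma|$, where $\delta_i \coloneqq x_i^\tT\Sigma^{-1}x_i$. Using $\Gamma(\nu/2) = (2/\nu)\,\Gamma(1+\nu/2)$ and continuity of $\Gamma$ at $1$ and at $d/2$, one obtains $A(\nu) = -2\log\nu + 2\log 2 - 2\log\Gamma(d/2) + o(1)$ as $\nu \to 0$; in particular $A(\nu)\to +\infty$ at the rate $-2\log\nu$.

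The crucial step is a lower bound on $B(\nu,\Sigma)$ that is uniform in $\Sigma$. I would apply Jensen's inequality to the concave function $\log$, writing $\nu + \delta_i = \alpha(\nu/\alpha) + (1-\alpha)(\delta_i/(1-\alpha))$ with the choice $\alpha = \nu/(d+\nu)$, so that $(d+\nu)\alpha = \nu$ and $(d+\nu)(1-\alpha) = d$. Summing over $i$ and multiplying by $(d+\nu)$ gives $(d+\nu)\sum_i w_i\log(\nu+\delta_i) \geq \nu\log\nu + d\sum_i w_i \log\delta_i + (d+\nu)\eta(\alpha)$, where $\eta(\alpha)\coloneqq -\alpha\log\alpha - (1-\alpha)\log(1-\alpha) \in [0,\log 2]$ is the binary entropy. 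Hence $B(\nu,\Sigma) \geq \nu\log\nu + (d+\nu)\eta(\alpha) + g(\Sigma)$ with $g(\Sigma) \coloneqq d\sum_i w_i \log\delta_i + \log|\Sigma|$, which is Tyler's scale-invariant $M$-functional.

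The main obstacle is establishing $g(\Sigma) \geq C$ for some constant $C$, uniformly over $\SPD(d)$. I would verify that Assumption~\ref{Ass:lin_ind} implies the classical Kent--Tyler general-position condition: any proper subspace $V \subsetneq \R^d$ of dimension $k < d$ contains at most $k$ of the samples $x_i$ by linear independence, so the total weight of samples lying in $V$ is bounded by $k\max_i w_i < k/d$. Under this condition, the standard existence theorem for Tyler's $M$-estimator yields that $g$ attains its infimum on $\SPD(d)$ and is therefore bounded below. Putting the pieces together gives $L(\nu,\Sigma) \geq A(\nu) + \nu\log\nu + (d+\nu)\eta(\alpha) + C$, and since the latter three terms stay bounded as $\nu \to 0$ while $A(\nu) \to +\infty$, we conclude $L(\nu_r,\Sigma_r) \to +\infty$ along any sequence with $\nu_r \to 0$, which contradicts the minimizing property.
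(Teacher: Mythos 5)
Your proposal is correct, and at the top level it follows the same strategy as the paper: isolate the Gamma-function part $g(\nu)=2\log\Gamma(\nu/2)-2\log\Gamma((d+\nu)/2)-\nu\log\nu$, which blows up like $-2\log\nu$ as $\nu\to 0$, and show that the remaining $\Sigma$-dependent part is bounded below. The difference lies in how that lower bound is obtained. The paper uses the elementary termwise estimate $(d+\nu)\log(\nu+\delta_i)\ge d\log\delta_i+\nu\log\nu$ and then proves the lower boundedness of the resulting scale-invariant functional $d\sum_i w_i\log(x_i^\tT\Sigma^{-1}x_i)+\log\abs{\Sigma}$ from scratch, via a case distinction on the eigenvalues of $\Sigma_r$, a normalization $P_r=\Sigma_r/\|\Sigma_r\|_F$, and an explicit induction over the weights of samples in degenerate eigenspaces; this self-contained argument is the bulk of the proof. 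You instead peel off the same functional by Jensen's inequality with $\alpha=\nu/(d+\nu)$ (a marginally sharper bound, since the nonnegative entropy term $(d+\nu)\eta(\alpha)$ could simply be dropped) and then delegate the lower boundedness to the classical Kent--Tyler existence theory for Tyler's $M$-estimator, after correctly checking that Assumption \ref{Ass:lin_ind} implies the general-position condition that every proper subspace of dimension $k$ carries total sample weight strictly less than $k/d$. Both routes are valid; yours is shorter and avoids the subsequence extractions of the paper's Case 2, but it rests on an external result that must be cited precisely (the weighted version of the existence condition, as in Kent and Tyler's work or Theorem 4.3 of the cited reference \cite{LS2019}), whereas the paper's version is self-contained. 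One cosmetic point: the scale-invariant functional attains its infimum only up to the scaling ray, so you should phrase the conclusion as boundedness from below rather than attainment on all of $\SPD(d)$; this does not affect the argument.
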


\begin{proof}
We write 
$$
L(\nu,\Sigma)=g(\nu)+L_\nu(\Sigma),
$$
where
$$
g(\nu)=2\log\left(\Gamma\left(\frac{\nu}{2}\right)\right)-2\log\left(\Gamma\left(\frac{d+\nu}{2}\right)\right)-\nu\log(\nu).
$$
Then it holds $\lim_{\nu\to0}g(\nu)=\infty$. Hence it is sufficient to show that $(\nu_r,\Sigma_r)_r$ has a subsequence $(\nu_{r_k},\Sigma_{r_k})$ 
such that $\left( L_{\nu_{r_k}}(\Sigma_{r_k}) \right)_r$ 
is bounded from below. Denote by $\lambda_{r1}\geq...\geq\lambda_{rd}$ the eigenvalues of $\Sigma_{r}$.
\\[1ex]
\textbf{Case 1:} Let $\{\lambda_{r,i}:r\in\mathbb N,i=1,\ldots,d\}\subseteq [a,b]$ for some $0<a\leq b<\infty$. 
Then it holds $\liminf_{r\to\infty}\log\abs{\Sigma_r}\geq \log(a^d)=d\log(a)$ and
$$
\liminf_{r\to\infty}(d+\nu_r)\sum_{i=1}^nw_i\log(\nu_r+x_i^\tT\Sigma_r^{-1}x_i)
\geq\lim_{r\to\infty}(d+\nu_r)\sum_{i=1}^nw_i\log\Bigl(\frac1b x_i^\tT x_i\Bigr)=d\sum_{i=1}^nw_i\log\Bigl(\frac1b x_i^\tT x_i\Bigr).
$$
Note that Assumption \ref{Ass:lin_ind} ensures $x_i\neq 0$ and $x_i^\tT x_i>0$ for $i=1,\ldots,n$. Then we get
\begin{align}
\liminf_{r\to\infty}L_{\nu_r}(\Sigma_r)
&=\liminf_{r\to\infty}(d+\nu_r)\sum_{i=1}^nw_i\log(\nu_r+x_i^\tT \Sigma_r^{-1}x_i)+\log\abs{\Sigma_r}\\
&\geq d\sum_{i=1}^nw_i\log\Bigl(\frac1b x_i^\tT x_i\Bigr)+d\log(a).
\end{align}
Hence $(L_{\nu_r}(\Sigma_r))_r$ is bounded from below and $(\nu_r,\Sigma_r)$ cannot be a minimizing sequence.
\\[1ex]
\textbf{Case 2:} Let $\{\lambda_{r,i}:r\in\mathbb N,i=1,\ldots,d\}\not\subseteq [a,b]$ for all $0<a\leq b<\infty$. 
Define $\rho_r=\|\Sigma_r\|_{F}$ and $P_r=\frac{\Sigma_r}{\rho_r}$. Then, by concavity of the logarithm, it holds 
\begin{align}
L_{\nu_r}(\Sigma_r)
&=(d+\nu_r)\sum_{i=1}^nw_i\log(\nu_r+x_i^\tT\Sigma_r^{-1}x_i)+\log(\abs{\Sigma_r})\\
&\geq d\sum_{i=1}^nw_i\log(x_i^\tT \Sigma_r^{-1}x_i)+ \nu_r\log(\nu_r)+\log(\abs{\Sigma_r})\\
&\geq d\sum_{i=1}^nw_i\log(\frac1{\rho_r}x_i^\tT P_r^{-1}x_i)+\log(\rho_r^d \abs{P_r})+\text{const}\\
&= \underbrace{d\sum_{i=1}^nw_i\log(x_i^\tT P_r^{-1}x_i)+\log(\abs{P_r})}_{\eqqcolon L_0(P_r)}+\text{const} \label{eq:sigma_to_p}.
\end{align}
Denote by $p_{r,1}\geq\ldots\geq p_{r,d}>0$ the eigenvalues of $P_r$. Since $\{P_r:r\in\mathbb N\}$ is bounded there exists some $C>0$ with $C\ge p_{r,1}$ for all $r\in\mathbb N$. 
Thus one of the following cases is fulfilled:
\begin{enumerate}
\item[i)] There exists a constant $c>0$ such that $p_{r,d}>c$ for all $r\in\mathbb N$. 
\item[ii)] There exists a subsequence $(P_{r_k})_k$ of $(P_r)_r$ which converges to some $P\in\partial\SPD(d)$.
\end{enumerate}

\textbf{Case 2i)} Let $c>0$ with $p_{r,d} \ge c$ for all $r\in\mathbb N$. 
Then  $\liminf\limits_{r\to\infty} \log(\abs{P_r}) \geq \log(c^d)=d\log(c)$ and
$$
\liminf_{r\to\infty}d\sum_{i=1}^nw_i\log(x_i^\tT P_r^{-1} x_i)\geq d\sum_{i=1}^nw_i\log\Bigl(\frac1C x_i^\tT x_i\Bigr).
$$
By \eqref{eq:sigma_to_p} this yields
\begin{align}
\liminf_{r\to\infty}L_{\nu_r}(\Sigma_r)
&\geq \liminf_{r\to\infty}d\sum_{i=1}^n w_i\log(x_i^\tT P_r^{-1}x_i)+\log(\abs{P_r})+\text{const}\\
&\geq d\sum_{i=1}^n w_i\log\Bigl(\frac1C x_i^\tT x_i\Bigr)+d\log(c)+\text{const}.
\end{align}
Hence $(L_{\nu_r}(\Sigma_r))_r$ is bounded from below and $(\nu_r,\Sigma_r)$ cannot be a minimizing sequence.
\\[1ex]
\textbf{Case 2ii)} We use similar arguments as in the proof of \cite[Theorem 4.3]{LS2019}. 
Let $(P_{r_k})_k$ be a subsequence of $(P_r)_r$ 
which converges to some $P\in\partial\SPD(d)$.
For simplicity we denote $(P_{r_k})_k$ again by $(P_r)_r$. Let $p_1\geq\ldots\geq p_d\geq0$ be the eigenvalues of $P$. 
Since $\|P\|_{F}=\lim_{r\to\infty} \| P_r\|_{F}=1$ it holds $p_1>0$. 
Let $q\in 1,\ldots,d-1$ such that $p_1\geq\ldots\geq p_q>p_{q+1}=\ldots=p_d=0$.
By $e_{r,1},\ldots,e_{,rd}$ we denote the orthonormal eigenvectors corresponding to $p_{r,1},\ldots,p_{r,d}$. 
Since $(\mathbb S^d)^d$ is compact we can assume (by going over to a subsequence) that $(e_{r,1},\ldots,e_{r,d})_r$ 
converges to orthonormal vectors $(e_1,\ldots,e_d)$. Define $S_0\coloneqq\{0\}$ and for $k=1,\ldots,d$ set 
$S_k\coloneqq \mathrm{span}\{e_1,\ldots,e_k\}$. 
Now, for $k=1,\ldots,d$ define
$$
W_k\coloneqq S_k\backslash S_{k-1}=\{y\in\R^d:\inner{y}{e_k}\neq 0, \inner{y}{e_l}=0 \text{ for }l=k+1,\ldots,d\}.
$$
Further, let
$$
\tilde I_k\coloneqq\{i\in\{1,\ldots,n\}:x_i\in S_k\}\quad\text{and}\quad I_k\coloneqq\{i\in\{1,\ldots,n\}:x_i\in W_k\}.
$$
Because of $S_k=W_k\dot\cup S_{k-1}$ we have $\tilde I_k=I_k\dot\cup\tilde I_{k-1}$ for $k=1,\ldots,d$. 
Due to Assumption \ref{Ass:lin_ind} we have $\abs{I_k}\leq\abs{\tilde I_k}\leq \dim(S_k)=k$ for $k=1,\ldots,d-1$. 
Defining for $j=1,\ldots,d$,
$$
L_j(P_r)\coloneqq d\sum_{i\in I_j} w_i\log(x_i^\tT P_r^{-1} x_i)+\log(p_{rj}),
$$
it holds $L_0(P_r)=\sum_{j=1}^d L_j$. For $j\leq q$ we get
$$
\liminf_{r\to\infty} L_j(P_r)
\geq 
\liminf_{r\to\infty}d\sum_{i\in I_j} w_i\log\Bigl(\frac1C x_i^\tT x_i\Bigr)+\log(p_{r,j})
=d\sum_{i\in I_j}w_i\log\Bigl(\frac1C x_i^\tT x_i\Bigr)+\log(p_j). 
$$
Since for $k\in\{1,\ldots,d\}$ and $i\in I_k$,
$$
x_i^\tT P_r^{-1} x_i=\sum_{j=1}^d\frac1{p_{r,j}}\inner{x_i}{e_{r,j}}^2\geq\frac1{p_{r,k}}\inner{x_i}{e_{rk}}^2,
$$
and $\lim_{r\to\infty}\inner{x_i}{e_{rk}} = \inner{x_i}{e_k}\neq0$, we obtain
$$
\liminf_{r\to\infty}p_{r,k}x_i^\tT P_r x_i
\geq\liminf_{r\to\infty}\inner{y}{e_{r,k}}
\geq \inner{y}{e_k}^2>0.
$$
Hence it holds for $j\ge q+1$ that
\begin{align}
L_j(P_r)
&=
d\sum_{i\in I_j}w_i(\log(x_i^\tT P_r^{-1}x_i)+\log(p_{r,j}))+\Big(1-d\sum_{i\in I_j}w_i\Big)\log(p_{r,j})\\
&= 
d\sum_{i\in I_j}w_i\log(p_{r,j}x_i^\tT P_r^{-1}x_i)+\Big(1-d\sum_{i\in I_j}w_i\Big)\log(p_{r,j}).
\end{align}
Thus we conclude
\begin{align}
\liminf_{r\to\infty}L_0(P_r)
&=
\liminf_{r\to\infty}\sum_{j=1}^d L_j(P_r)\geq\sum_{j=1}^q\liminf_{r\to\infty}L_j(P_r) + \liminf_{r\to\infty}\sum_{j=q+1}^d L_j(P_r)
\\
&\geq
\sum_{j=1}^q d\sum_{i\in I_j}w_i\log(\frac1Cx_i^\tT x_i)+\log(p_j)+\liminf_{r\to\infty}\sum_{j=q+1}^d d\sum_{i\in I_j}w_i\log(p_{rj}x_i^\tT P_r^{-1}x_i)\\
&\;\; +\liminf_{r\to\infty}\sum_{j=q+1}^d\Big(1-d\sum_{i\in I_j}w_i\Big)\log(p_{rj})\\
&\geq
\sum_{j=1}^qd\sum_{i\in I_j}w_i\log(\frac1Cx_i^\tT x_i)+\log(p_j)
+\sum_{j=q+1}^d d\sum_{i\in I_j }w_i\log(\inner{x_i}{e_j})\\
&\; \; +\liminf_{r\to\infty}\sum_{j=q+1}^d \Big(1-d\sum_{i\in I_j }w_i\Big)\log(p_{r,j})\\
&=
\text{const}+\liminf_{r\to\infty}\sum_{j=q+1}^d\Big(1-d\sum_{i\in I_j}w_i\Big)\log(p_{r,j}).
\end{align}
It remains to show that there exist $\tilde c > 0$ such that
\begin{equation}\label{eq:liminf}
\liminf_{r\to\infty}\sum_{j=q+1}^d\Big(1-d\sum_{i\in I_j}w_i\Big)\log(p_{r,j}) \ge \tilde c.
\end{equation}
We prove for $k\geq q+1$ by induction that for sufficiently large $r\in \mathbb N$ it holds
\begin{equation}\label{eq:induction}
\sum_{j=k}^d \Big(1-d\sum_{i\in I_j}w_i \Big) \log(p_{rj})\geq \Big(d \sum_{i\in\tilde I_{k-1}} w_i -(k-1)\Big)\log(p_{r,k}).
\end{equation}
\textbf{Induction basis $k=d$:}
Since $\tilde I_k=I_k\cup \tilde I_{k-1}$ we have
$$
\sum_{i\in\tilde I_k} w_i-\sum_{i\in\tilde I_{k-1}}w_i=\sum_{i\in I_k}w_i,
$$
and further
$$
1-d\sum_{i\in I_d} w_i
=1 - d\Big(\sum_{i\in\tilde I_d} w_i - \sum_{i\in\tilde I_{d-1}} w_i\Big)
=1-d \Big(1-\sum_{i\in\tilde I_{d-1}} w_i\Big)
=d\sum_{i\in\tilde I_{d-1}} w_i-(d-1).  
$$
If we multiply both sides with $\log(p_{rd})$ this yields \eqref{eq:induction} for $k=d$.
\\
\textbf{Induction step:} Assume that \eqref{eq:induction} holds for some $k+1$ with $d\geq k+1 >q+1$ i.e.
$$
\sum_{j=k+1}^d\Big(1-d\sum_{i\in I_j}w_i\Big)\log(p_{r,j})\geq d\Big(\sum_{i\in\tilde I_k}w_i-\frac{k}{d}\Big)\log(p_{r,k+1}).
$$
Then we obtain
\begin{align}
&\sum_{j=k}^d \Big(1-d\sum_{i\in I_j}w_i\Big)\log(p_{r,j})\\
&=\sum_{j=k+1}^d\Big(1-d\sum_{i\in I_j}w_i\Big)\log(p_{r,j})+\Big(1-d\sum_{i\in I_k} w_i\Big)\log(p_{r,k})\\
&\geq d\Big(\sum_{i\in\tilde I_k}w_i-\frac{k}{d}\Big)\log(p_{r,k+1})+\Big(1-d\sum_{i\in I_k} w_i\Big)\log(p_{r,k}).
\end{align}
and since $\sum_{i\in\tilde I_k}w_i<\abs{\tilde I_k}\frac1d\leq\frac{k}{d}$ by Assumption \ref{Ass:lin_ind} and $p_{r,k+1}\leq p_{r,k}<1$ 
finally
\begin{align}
&\geq d \Big(\sum_{i\in\tilde I_k}w_i - \frac{k}{d}\Big) \log(p_{r,k})+\Big(1-d\sum_{i\in I_k} w_i\Big) \log(p_{r,k})\\
&=\Big(d\sum_{i\in\tilde I_{k-1}}w_i-(k-1)\Big)\log(p_{r,k}).
\end{align}
This shows \eqref{eq:induction} for $k\geq q+1$. Using $k=q+1$ in \eqref{eq:liminf} we get 
\begin{equation}
\liminf_{r\to\infty}\sum_{j=q+1}^d\Big(1-d\sum_{i\in I_j}w_i\Big)\log(p_{rj})
\geq
\liminf_{r\to\infty}\underbrace{\Big(d\sum_{i\in\tilde I_{q}}w_i-q\Big)}_{<0}\underbrace{\log(p_{r,q+1})}_{\text{bounded from above}}>-\infty.
\end{equation}
This finishes the proof.
\end{proof}

\begin{Lemma}\label{lem:lik}
	Let $(\nu_r,\Sigma_r)_r$ be a sequence in $\R_{>0}\times\mathrm{SPD}(d)$ such that there exists $\nu_-\in\R_{>0}$ 
	with $\nu_-\leq\nu_r$ for all $r\in\mathbb N$. Denote by $\lambda_{r,1}\geq\cdots\geq\lambda_{r,d}$ the eigenvalues of $\Sigma_r$.
	If $\{\lambda_{1,r}:r\in\mathbb N\}$ is unbounded or $\{\lambda_{d,r}:r\in\mathbb N\}$ has zero as a cluster point, 
	then there exists a subsequence $(\nu_{r_k},\Sigma_{r_k})_k$ of $(\nu_r,\Sigma_r)_r$, such that 
	$\lim\limits_{k\to\infty}L(\nu_{r_k},\Sigma_{r_k})=\infty$.
\end{Lemma}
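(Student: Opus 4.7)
The plan is to invoke the fixed-$\nu$ divergence result \cite[Theorem 4.3]{LS2019}: the eigenvalue hypothesis on $(\Sigma_r)_r$ yields a subsequence, which I denote again by $(\Sigma_r)_r$, such that $L_\nu(\Sigma_r)\to\infty$ for every fixed $\nu>0$. After this one-time extraction it remains to control the varying factor $\nu_r$, for which I split into two cases depending on whether $\nu_r\geq d+1$ holds infinitely often, or $\nu_r$ stays in the compact interval $[\nu_-, d+1]$ along a further subsequence.

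In the \emph{large-$\nu_r$ case}, I recycle the monotonicity chain used in Case 2 of the proof of Theorem \ref{thm:Existence of Scatter}. Starting from
\begin{align*}
L(\nu,\Sigma) = 2\log\Bigl(\tfrac{\Gamma(\nu/2)(\nu/2)^{d/2}}{\Gamma((\nu+d)/2)}\Bigr) + d\log 2 + (d+\nu)\sum_i w_i\log\bigl(1+\tfrac1\nu x_i^\tT\Sigma^{-1}x_i\bigr) + \log|\Sigma|,
\end{align*}
and applying the monotonicity of $k\mapsto(1+k/x)^k$ together with $\nu_r+d\geq\nu_r\geq d+1$, I obtain a lower bound of the form $L(\nu_r,\Sigma_r)\geq h(\nu_r)+\mathrm{const}+L_1(\Sigma_r)$, where $h(\nu)\coloneqq 2\log(\Gamma(\nu/2)(\nu/2)^{d/2}/\Gamma((\nu+d)/2))$ is continuous with $h(\nu)\to 0$ at infinity and hence bounded on $[d+1,\infty)$. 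Since $L_1(\Sigma_r)\to\infty$ by the initial extraction, $L(\nu_r,\Sigma_r)\to\infty$.

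In the \emph{bounded-$\nu_r$ case}, I use the splitting $L(\nu,\Sigma)=g(\nu)+L_\nu(\Sigma)$ from the proof of Lemma \ref{lem:rike}, where $g$ is continuous and hence bounded on the compact interval $[\nu_-, d+1]$. To transfer $L_{\nu_-}(\Sigma_r)\to\infty$ to $L_{\nu_r}(\Sigma_r)\to\infty$ despite the moving parameter, I establish the algebraic identity
\begin{align*}
L_\nu(\Sigma)-L_{\nu_-}(\Sigma) = (d+\nu)\sum_i w_i\log\tfrac{\nu+\delta_i}{\nu_-+\delta_i} + (\nu-\nu_-)\sum_i w_i\log(\nu_-+\delta_i),
\end{align*}
with $\delta_i\coloneqq x_i^\tT\Sigma^{-1}x_i\geq 0$. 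The first summand is nonnegative as $\nu\geq\nu_-$, and the second is uniformly bounded below since $\nu-\nu_-\in[0,d+1-\nu_-]$ and $\log(\nu_-+\delta_i)\geq\log\nu_-$. This yields $L_\nu(\Sigma)\geq L_{\nu_-}(\Sigma)-C$ uniformly for $\nu\in[\nu_-,d+1]$ and $\Sigma\in\mathrm{SPD}(d)$, reducing this case to the fixed-$\nu$ divergence.

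The only genuine obstacle is the bounded-$\nu_r$ case: a direct substitution $\nu=\nu_r$ into \cite[Theorem 4.3]{LS2019} is not available since $L_\nu(\Sigma)$ depends on $\nu$ through both the exponent $d+\nu$ and the shift inside the logarithm, so one cannot simply pass to a cluster value of $\nu_r$. The uniform comparison above bypasses this difficulty by exploiting only the lower bound $\nu_-\leq\nu_r$ on the relevant indices.
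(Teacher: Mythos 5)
Your proof is correct, but it takes a genuinely different and heavier route than the paper's. The paper disposes of the varying $\nu_r$ in one stroke: it shows that for every fixed $k\ge 0$ the map $\nu\mapsto(d+\nu)\log(\nu+k)$ is increasing on $\R_{>0}$ (its derivative is bounded below by $1+\log(1+\nu)>0$), whence
$(d+\nu_r)\sum_{i} w_i\log\bigl(\nu_r+x_i^\tT\Sigma_r^{-1}x_i\bigr)\ge(d+\nu_-)\sum_{i} w_i\log\bigl(\nu_-+x_i^\tT\Sigma_r^{-1}x_i\bigr)$,
so $L_{\nu_r}(\Sigma_r)\ge L_{\nu_-}(\Sigma_r)\to\infty$ directly, with no case distinction and no algebraic identity. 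That joint monotonicity handles the exponent $d+\nu$ and the shift inside the logarithm simultaneously, so the ``genuine obstacle'' you describe in your bounded case is not actually there: your identity-plus-uniform-lower-bound argument is a valid but more laborious substitute for this one-line fact. On the other hand, your treatment is more scrupulous about the $\nu$-only term $g(\nu)=2\log\Gamma(\nu/2)-2\log\Gamma((d+\nu)/2)-\nu\log\nu$: the paper's displayed computation simply writes $L(\nu_r,\Sigma_r)=L_{\nu_r}(\Sigma_r)$, silently dropping $g(\nu_r)$, which is harmless where the lemma is invoked (there $\nu_{r_k}$ converges, so $g$ is bounded along the subsequence) but requires justification under the stated hypothesis $\nu_r\ge\nu_-$ alone, since $g(\nu)\to-\infty$ as $\nu\to\infty$. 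Your split at $\nu_r\ge d+1$, with the bounded Gamma-ratio $h$ and the reduction to $L_1(\Sigma_r)$ exactly as in Case 2 of Theorem \ref{thm:Existence of Scatter}, closes precisely that gap. Both proofs rest on the same external input, the fixed-$\nu$ divergence from \cite[Theorem 4.3]{LS2019}, and both are correct modulo the remarks above.
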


\begin{proof}
	Without loss of generality we assume (by considering a subsequence) 
	that either $\lambda_{r1}\to\infty$ as $r\to\infty$ and $\lambda_{rd}\geq c>0$ for all $r\in\mathbb N$ or that $\lambda_{rd}\to0$ as $r\to\infty$. 
	By \cite[Theorem 4.3]{LS2019} for fixed $\nu = \nu_-$, we have  $L_{\nu_-}(\Sigma_r) \to\infty$ as $r\to\infty$.
	
	The function $h\colon \R_{>0}\to \R$ defined by $\nu\mapsto (d+\nu)\log(\nu+k)$ is monotone increasing for all $k\in\R_{\geq 0}$. 
	This can be seen as follows:  The derivative of $h$ fulfills
	$$
	h'(\nu)=\frac{d+\nu}{k+\nu}+\log(\nu+k)\geq \frac{1+\nu}{k+\nu}+\log(\nu+k)
	$$
	and since
	$$
	\frac{\partial}{\partial k} \left(\frac{1+\nu}{k+\nu}+\log(\nu+k)\right)=\frac{k-1}{(k+\nu)^2}
	$$
	the later function is minimal for $k=1$, so that
	$$
	h'(\nu)\geq \frac{1+\nu}{k+\nu}+\log(\nu+k)\geq\frac{1+\nu}{1+\nu}+\log(\nu+1)=1+\log(1+\nu)>0.
	$$
	Using this relation, we obtain
	$$
	(d+\nu_r)\sum_{i=1}^n w_i\log\left(\nu_r+x_i^\tT\Sigma_{r}^{-1}x_i\right)
	\geq
	(d+\nu_-)\sum_{i=1}^n w_i\log\left(\nu_-+x_i^\tT\Sigma_{r}^{-1}x_i\right)
	$$
	and further
	\begin{align}
	L(\nu_r,\Sigma_r)
	&= 
	(d+\nu_r)\sum_{i=1}^n w_i\log\left(\nu_r+x_i^\tT\Sigma_{r}^{-1}x_i\right)+\log(\abs{\Sigma_r})\\
	&\geq
	(d+\nu_-)\sum_{i=1}^n w_i\log\left(\nu_-+x_i^\tT\Sigma_r^{-1}x_i\right)+\log(\abs{\Sigma_r})\\
	&=L_{\nu_-}(\Sigma_r)\to\infty \qquad \mathrm{as} \quad  r\to\infty.
	\end{align}
\end{proof}

\section*{Acknowledgment}
The authors want to thank the anonymous referees for bringing certain accelerations of the EM algorithm 
to our attention. 
\\
 Funding by the German Research Foundation (DFG) with\-in the project STE 571/16-1 is gratefully acknowledged.

\bibliographystyle{abbrv}
\bibliography{Student_t}

\end{document}